\documentclass[11pt,a4paper]{article}
\usepackage[utf8]{inputenc}
\usepackage[a4paper, total={6.6in, 9.6in}]{geometry}

\usepackage[T1]{fontenc}
\usepackage[variablett]{lmodern}
\usepackage{amsmath}
\usepackage{amsfonts}
\usepackage{amssymb}
\usepackage{float}
\usepackage{wrapfig}
\usepackage{enumitem}
\usepackage{wasysym}


\usepackage{listings}
\lstdefinelanguage{GAP}{%
  morekeywords={%
    Assert,Info,IsBound,QUIT,%
    TryNextMethod,Unbind,and,break,%
    continue,do,elif,%
    else,end,false,fi,for,%
    function,if,in,local,%
    mod,not,od,or,%
    quit,rec,repeat,return,%
    then,true,until,while%
  },%
  sensitive,%
  morecomment=[l]\#,%
  morestring=[b]",%
  morestring=[b]',%
}[keywords,comments,strings]
\usepackage[svgnames]{xcolor}
\lstset{
  basicstyle=\ttfamily,
  keywordstyle=\color{red},
  stringstyle=\color{blue},
  commentstyle=\color{green!70!black},
  columns=fullflexible,
}

\usepackage{graphicx}
\usepackage{quiver}

\usepackage[backend=bibtex,style=alphabetic]{biblatex}
\bibliography{fielddef}

\usepackage{caption}
\captionsetup{belowskip=0pt}

\usepackage{hyperref}
\hypersetup{
    colorlinks = true,
}
\PassOptionsToPackage{unicode}{hyperref}
\PassOptionsToPackage{naturalnames}{hyperref}
\pdfstringdefDisableCommands{%
  \def\xi{ξ}%
  \def\delta{δ}%
  \def\varpi{ϖ}%
  \def\zeta{ζ}%
  \def\Omega{Ω}%
  \def\leq{≤}%
  \def\gamma{γ}%
  \def\Gamma_K{Γ}%
  \def\Cref#1{#1}%
  \def\texttt#1{<#1>}%
  \let\HyPsd@CatcodeWarning\@gobble
}

\usepackage[nameinlink]{cleveref}
\Crefname{subsection}{Subsection}{Subsections}
\Crefname{question}{Question}{Questions}
\Crefname{subsubsection}{Paragraph}{Paragraphs}

\newlist{propenum}{enumerate}{1}
\setlist[propenum]{label=(\roman*), ref=\theproposition~(\roman*)}
\crefalias{propenumi}{proposition} 

\newlist{thmenum}{enumerate}{1}
\setlist[thmenum]{label=(\roman*), ref=\thetheorem~(\roman*)}
\crefalias{thmenumi}{theorem} 

\newlist{corenum}{enumerate}{1}
\setlist[corenum]{label=(\roman*), ref=\thecorollary~(\roman*)}
\crefalias{corenumi}{corollary} 


\usepackage{comment}


\newcommand{\N}{\mathbb{N}}
\newcommand{\Z}{\mathbb{Z}}

\newcommand{\Q}{\mathbb{Q}}
\newcommand{\C}{\mathbb{C}}

\newcommand{\Qbar}{\bar\Q}

\newcommand{\piet}{\pi_1^{\text{ét}}}

\renewcommand{\ni}{\mathrm{ni}}
\newcommand{\nid}{\ni^{\natural}}

\newcommand{\Ima}{\mathrm{Im}}

\renewcommand{\P}{\mathbb{P}}

\newcommand{\PC}{\P^1(\C)}

\newcommand{\Spec}{\mathrm{Spec}}

\newcommand{\Frac}{\mathrm{Frac}}

\newcommand{\Aut}{\mathrm{Aut}}
\newcommand{\Gal}{\mathrm{Gal}}

\newcommand{\Sym}{\mathfrak{S}}
\newcommand{\card}[1]{ \left | #1 \right | }
\newcommand{\gen}[1]{ \left \langle #1 \right \rangle }
\newcommand{\ord}{\mathrm{ord}}

\newcommand{\Hur}{\mathrm{Hur}} 
\newcommand{\Hn}{\mathcal{H}} 
\newcommand{\Conf}{\mathrm{Conf}} 
\newcommand{\B}{\mathrm{B}} 

\newcommand{\Comp}{\mathrm{Comp}} 

\newcommand{\id}{\mathrm{id}}

\newcommand{\gbar}{\underline{g}}
\newcommand{\tbar}{\underline{t}}
\newcommand{\cbar}{\underline{c}}

\newcommand{\eqdef}{\overset{\text{def}}{=}}

\newcommand{\verti}{\, \middle \vert \,}
\newcommand{\bigverti}{\, \bigg\vert \,}

\newcommand{\rmN}{\mathrm{N}}

\renewcommand{\bar}{\overline}
\renewcommand{\hat}{\widehat}
\renewcommand{\tilde}{\widetilde}

\newcommand{\ab}{\text{ab}}

\newcommand{\bigslant}[2]{{\raisebox{.2em}{$#1$}\left/\raisebox{-.2em}{$#2$}\right.}}


\usepackage{amsthm}
\usepackage{thmtools}

\newcounter{mycounter}[section]

\theoremstyle{plain}
\newtheorem{theorem}[mycounter]{Theorem}
\newtheorem{corollary}[mycounter]{Corollary}
\newtheorem{proposition}[mycounter]{Proposition}
\newtheorem{lemma}[mycounter]{Lemma}

\theoremstyle{remark}
\newtheorem{remark}[mycounter]{Remark}

\theoremstyle{definition}
\newtheorem{definition}[mycounter]{Definition}
\newtheorem{question}[mycounter]{Question}
\newtheorem{example}[mycounter]{Example}

\counterwithin{equation}{section}


\usepackage{titletoc}
\setcounter{tocdepth}{1} 
\titlecontents{section}
[0pt]                                               
{}%
{\contentsmargin{0pt}                               
    \large\thecontentslabel.\enspace
    }
{\contentsmargin{0pt}\large}                        
{\titlerule*[.5pc]{ }\contentspage}                 
[]                                                  

\usepackage{titlesec}
\titleformat{\section}[block]{\normalfont\centering\scshape\large}{\thesection.}{1em}{}
\titleformat{\subsection}[block]{\normalfont\large}{\thesubsection.}{1em}{\bf}
\titleformat{\subsubsection}[runin]{\normalfont}{\bf\thesubsubsection.}{0.3em}{\bf}

\makeatletter
\patchcmd{\@maketitle}{\LARGE}{\huge}{\typeout{OK 1}}{\typeout{Failed 1}}
\patchcmd{\@maketitle}{\large \lineskip}{\Large \lineskip}{\typeout{OK 2}}{\typeout{Failed 2}}
\makeatother

\title{
  Fields of Definition of \\
  Components of Hurwitz Spaces
}

\author{Béranger Seguin\footnote{Univ. Lille, CNRS, UMR 8524 - Laboratoire Paul Painlevé, F-59000 Lille, France. Email: \texttt{beranger.seguin@ens.fr}.}}
\date{}

\renewenvironment{abstract}{%
\hfill\begin{minipage}{0.95\textwidth}
\rule{\textwidth}{1pt} \textsc{Abstract.}}
{\par\noindent\rule{\textwidth}{1pt}\end{minipage}}


\begin{document}

\maketitle{}

\begin{abstract}
  For a fixed finite group $G$, we study the fields of definition of geometrically irreducible components of Hurwitz moduli schemes of marked branched $G$-covers of the projective line.
  The main focus is on determining whether components obtained by ``gluing'' two other components, both defined over a number field $K$, are also defined over $K$.
  The article presents a list of situations in which a positive answer is obtained.
  As an application, when $G$ is a semi-direct product of symmetric groups or the Mathieu group $M_{23}$, components defined over $\Q$ of small dimension ($6$ and $4$, respectively) are shown to exist.

  \bigskip

	\textbf{Keywords: } Inverse Galois Theory $\cdot$ Arithmetic of Hurwitz Spaces $\cdot$ Patching\\
	\textbf{MSC 2010: } 12F12 $\cdot$ 14D10 $\cdot$ 14H10 $\cdot$ 14H30
\end{abstract}

{
  \hypersetup{linkcolor=black}
  \tableofcontents{}
}
\hfill\rule{0.95\textwidth}{1pt}

\section{Introduction}

Let $G$ be a finite group and $K$ be a field of characteristic zero.
Hurwitz schemes are moduli spaces of branched $G$-covers of $\P^1$.
Their $K$-points are particularly significant for number theory since they are tightly related to the inverse Galois problem for $G$ over $K(t)$; see \autocite{rw, Fried77, Fried91}.

When $K$ is algebraically closed, Riemann's existence theorem implies that the theory reduces to topology.
Specifically, $\C$-points of Hurwitz schemes correspond to isomorphism classes of topological $G$-covers of punctured Riemann spheres.
A classical topological construction lets one ``glue'' two marked covers -- one with $r_1$ branch points and one with $r_2$ branch points -- into a single marked cover with $r_1 + r_2$ branch points.
This gluing operation plays a central role in \autocite{EVW}.

When $K$ is a discrete complete valued field, Harbater defined an analogous \emph{patching} operation to construct covers defined over $K$ with a specified monodromy group by patching together covers with smaller monodromy groups; see \autocite{harbater, haran, liu}.
This construction leads to a positive answer to the inverse Galois problem over $K(T)$.

For number theorists, the most interesting case is that of number fields.
However, no gluing or patching operation is known in this case, although it would be a game-changing tool for inverse Galois theory.
In this article, we focus not on $G$-covers themselves but on geometrically irreducible components of Hurwitz moduli schemes.
Identifying components defined over $\Q$ is a crucial first step in finding rational points on these schemes.
For this reason, the question of the fields of definition of these components is a well-studied topic; see \autocite{Cau, DebEms, EVW2, Fried91}.

Assume $K$ is a number field.
The gluing operation over $\bar K$ induces a monoid structure on the set $\Comp(G)$ of geometrically irreducible components of Hurwitz moduli schemes of marked branched $G$-covers of $\P^1$.
In \autocite{geomringcomp}, we studied this product operation and the corresponding monoid ring, introduced in \autocite{EVW} under the name \emph{ring of components}.
To understand the arithmetic properties of the topological gluing operation, a prominent question is the following:

\begin{question}
  \label{qn:prod-defined}
  Let $x,y \in \Comp(G)$ be components defined over $K$.
  Is the component $xy$, obtained by gluing $x$ and $y$, defined over $K$?
\end{question}
\Cref{qn:prod-defined} and related problems are the main focus of this article.
Our main result is that the answer is positive in situations (i), (ii) and (iii) below:
\begin{theorem}
  \label{thm:main}
  Let $x, y \in \Comp(G)$ be components defined over $K$.
  Denote by $H_1$ and $H_2$ the monodromy groups of the covers contained in $x$ and $y$ respectively, and let $H = \gen{H_1, H_2}$.
  Then:
  \begin{thmenum}
    \item
      \label{thmitem:main-prod-permuting-defK}
      If $H_1 H_2 = H$, then the glued component $xy$ is defined over $K$.
    \item
      \label{thmitem:main-prod-Mbig-defK}
      If every conjugacy class of $H$ that is a local monodromy class of the covers in the component $xy$ occurs at at least $M$ branch points (for some constant $M$ which depends only on the group $G$), then $xy$ is defined over $K$.
    \item
    \label{thmitem:main-prod-conj-defK}
      There are elements $\gamma, \gamma' \in H$ satisfying $\gen{H_1^{\gamma}, H_2^{\gamma'}} = H$ such that the component $x^{\gamma} y^{\gamma'}$, obtained by letting $\gamma, \gamma'$ act on $x,y$ and by gluing the resulting components, is defined over $K$.
  \end{thmenum}
\end{theorem}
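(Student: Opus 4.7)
The approach is to analyze the Galois action on $\Comp(G)$ via its standard description through the cyclotomic character $\chi \colon \Gal(\bar K/K) \to \hat{\Z}^{\times}$: for any component represented by a Nielsen tuple $(g_i)$, the Galois-conjugate $\sigma \cdot [(g_i)]$ is the braid-conjugation orbit of $(g_i^{\chi(\sigma)})$. The plan is to reduce each of (i)--(iii) to a group-theoretic condition on certain stabilizer subgroups $N_i \subseteq N_H(H_i)$ of braid orbits, noting that $H_i \subseteq N_i$.

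For part (i), I would fix Nielsen tuples $(g_i)$ representing $x$ and $(h_j)$ representing $y$. Since $x$ is defined over $K$, for each $\sigma$ there exist a braid $\beta_1$ and $a = a(\sigma) \in H_1$ with $\beta_1 \cdot (g_i^{\chi(\sigma)}) = (g_i^{a})$, and symmetrically $b = b(\sigma) \in H_2$ for $y$. Applying these braids to the concatenated tuple and then $H$-conjugating by $a$, the Galois-conjugate of $xy$ is represented by $(g_i, h_j^{ba^{-1}})$, yielding the identity $\sigma \cdot (xy) = x \cdot y^{ba^{-1}}$ in $\Comp(G)$. The product $x \cdot y^{c}$ coincides with $xy$ whenever $c$ lies in $N_2 N_1$. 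Since $c = ba^{-1} \in H_2 H_1$ and $N_i \supseteq H_i$, the hypothesis $H_1 H_2 = H$ (equivalently $H_2 H_1 = H$) gives $c \in H = H_2 H_1 \subseteq N_2 N_1$, concluding (i).

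For part (ii), the plan is to invoke a Conway--Parker-type stabilization theorem, in the refined form due to Ellenberg--Venkatesh--Westerland or Wood: when the multiset of conjugacy classes of a Nielsen tuple lies in the stable range (each class appearing at least $M(H)$ times), braid-conjugation orbits are classified by a Galois-invariant group-theoretic datum related to a quotient of $\H_2(H, \Z)$. The multisets of $x$ and $y$ are Galois-stable by hypothesis, so their concatenation---the multiset of $xy$---is Galois-stable. The invariant attached to $xy$ is assembled from those of $x$ and $y$ and is therefore Galois-invariant. Combined with the stability hypothesis on $xy$, this pins down the component up to Galois, so $xy$ is defined over $K$.

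For part (iii), the key starting observation is that $x^\gamma$ is automatically defined over $K$ whenever $x$ is, because conjugation by a fixed abstract group element commutes with the cyclotomic Galois action on Nielsen tuples. Varying $(\gamma, \gamma') \in H \times H$ subject to $\langle H_1^{\gamma}, H_2^{\gamma'} \rangle = H$ therefore produces a finite family of product components, all built from $K$-rational pieces and permuted by Galois. The question becomes to find a pair $(\gamma, \gamma')$ such that the analogue of the cocycle $\sigma \mapsto b(\sigma) a(\sigma)^{-1}$ for the twisted representatives takes values in the appropriate product of stabilizer subgroups. This is where the main obstacle lies: one must produce twists that trivialize the obstruction \emph{uniformly} in $\sigma$ rather than pointwise. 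I anticipate the resolution to involve either a norm-type finiteness argument over $H \times H$, or a reduction to the stable-range case (ii) after iterating the gluing and choosing the conjugacy judiciously.
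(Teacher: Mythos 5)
Your argument rests on a false premise about the Galois action: you assert that for a component represented by a tuple $(g_1,\ldots,g_n)$, the conjugate $\sigma.x$ is the braid orbit of the entrywise power tuple $(g_i^{\chi(\sigma)})$. This is not what the branch cycle lemma says. The lemma only asserts that each entry of a tuple representing $\sigma.x$ is \emph{conjugate} (by an element that may depend on $i$, and is not controlled) to the corresponding power $g_{\sigma^{-1}(i)}^{\chi(\sigma)^{-1}}$; it pins down the multidiscriminant of $\sigma.x$, not its braid orbit. If the Galois action really were entrywise cyclotomic powering, then it would trivially commute with concatenation and \Cref{qn:prod-defined} would have an unconditional positive answer with no hypotheses at all --- which is precisely what is \emph{not} known and why hypotheses such as $H_1H_2=H$ or $M$-bigness are needed. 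Concretely, in your proof of (i) the step where $\sigma.(xy)$ is ``represented by $(g_i,h_j^{ba^{-1}})$'' has no justification: what is missing is the arithmetic input that $\sigma.(xy)$ lies in $\nid(\sigma.x,\sigma.y)$, i.e.\ Cau's theorem (\Cref{thm:cau}), whose proof goes through admissible covers on the boundary of Wewers' compactification and cannot be replaced by the naive powering description. Your braid computation showing that $x\,y^{c}=xy$ for $c$ a product of an element of $\gen{y}$ and an element of $\gen{x}$ does match the group-theoretic half of the paper's argument ($\nid(x,y)=\{xy\}$ under the permuting hypothesis, via \Cref{propitem:braid-conjugate}), but without the equivariance statement $\sigma.\nid(x,y)=\nid(\sigma.x,\sigma.y)$ the proof of (i) does not close.

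The same gap propagates to (ii) and (iii). For (ii) your outline points at the right tools (the lifting invariant and the Conway--Parker/EVW--Wood classification in the stable range), but the decisive steps are asserted rather than proved: one needs that the Galois action on components is computed on lifting invariants by Wood's formula ($\Pi_{H,c}(\sigma.x)=\sigma.\Pi_{H,c}(x)$), and, crucially, that this action on $U_1(H,c)$ is compatible with multiplication (\Cref{thm:galois-gpact-liftinv}), so that $\Pi_{H,c}(\sigma.(xy))=\Pi_{H,c}(xy)$; ``the invariant of $xy$ is assembled from those of $x$ and $y$ and is therefore Galois-invariant'' is exactly the nontrivial claim, not a consequence of rationality of the multisets. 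For (iii) you state the correct (and true, by $\sigma.(x^{\gamma})=(\sigma.x)^{\gamma}$) observation that conjugates of $K$-components are $K$-components, but you then explicitly leave the main step open; neither of your anticipated strategies works as stated --- reduction to (ii) fails because $x^{\gamma}y^{\gamma'}$ has only $\deg x+\deg y$ branch points and is nowhere near the stable range. The paper's actual route is of a different nature: it produces, via Hilbert's irreducibility theorem, infinitely many pairwise linearly disjoint extensions $K_n\mid K$ over which $x$ and $y$ have rational points, patches the corresponding covers over the complete field $K_n((X))$ (Haran's algebraic patching) to get a component of $\nid(x,y)$ defined over $K_n$ for each $n$, and then uses finiteness of $\nid(x,y)$ and linear disjointness to find one defined over $K$. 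Without an input of this kind (or Cau's theorem), part (iii) remains unproved in your proposal.
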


In \Cref{sn:preliminaries}, we establish the notation and introduce the key objects.
The three parts of \Cref{thm:main} are then proved in three corresponding sections:

\begin{itemize}
  \item
    \Cref{thmitem:main-prod-permuting-defK} (which is \Cref{thmitem:prod-permuting-defK}) is proved using techniques introduced by \autocite{Cau} and lemmas about the braid group action on tuples.
    In \Cref{sn:cau-thm}, we present this result, including a generalized version, and propose applications.
    Cases of interest include the situation where $H_1$ or $H_2$ is normal in $H$, notably if one is included in the other.
    A consequence of our ideas is presented in \Cref{susbn:galois-small}: we show that the Galois action on components is entirely determined by the Galois action on components with few branch points; precise statements are given in \Cref{prop:reduction-galact,prop:small-deg-all-defK}.
    An application of \Cref{thmitem:main-prod-permuting-defK} is given in \Cref{ex:semi-direct-prod}: if $G$ is a semi-direct product of symmetric groups, there is a component defined over $\Q$ of connected $G$-covers with six branch points.
    This improves on a similar example of Cau where twelve branch points were used.
  \item
    \Cref{thmitem:main-prod-Mbig-defK} (which is \Cref{thmitem:prod-Mbig-defK}) is proved using the \emph{lifting invariant}, defined in \autocite{EVW2, wood} after ideas of Fried \autocite{Fried95}, and a version of the Conway-Parker theorem found in these articles.
    In \Cref{sn:liftinv}, we review this invariant and use it to determine the fields of definition of glued components.
  \item
    \Cref{thmitem:main-prod-conj-defK} (which is \Cref{thm:prod-conj-defK}) is based on \emph{patching results} over complete valued fields.
    We follow the algebraic patching approach from \autocite{haran}.
    By patching covers over infinitely many complete valued fields, we obtain a result concerning the fields of definition of components in the non-complete case.
    \Cref{sn:patching} is concerned with the proof of this theorem.
    Application are given in \Cref{ex:m23,ex:17T7}: when $G$ is the Mathieu group $M_{23}$ or the transitive group $\mathrm{PSL}_2(16) \rtimes \Z/2\Z$, there is a component defined over $\Q$ of connected $G$-covers with only four branch points.
\end{itemize}

We do not know if the answer to \Cref{qn:prod-defined} is always positive.
Finding counterexamples is difficult because there are few tools available to prove that components are not defined over $\Q$.
For instance, the lifting invariant cannot be used to find a counterexample, as established by \Cref{thm:galois-gpact-liftinv}.
Indeed, the lifting invariant of a product of components defined over $K$ is invariant under the Galois action: from the point of view of this invariant, products of components defined over $K$ are indistinguishable from components defined over $K$.


\section{Preliminaries}
\label{sn:preliminaries}

In this section, we define key objects and review classical results concerning $G$-covers, their moduli spaces, their components and the Galois action.
In \Cref{subsn:notation}, notational and terminological choices are presented.
In \Cref{subsn:main-objects}, we introduce the main objects of this article, notably $G$-covers and Hurwitz spaces, both topological and algebraic.
Then, a description of the Galois action of $\Gamma_K$ on $G$-covers and their components is given in \Cref{subsn:galact} and used to define fields of definitions of $G$-covers and of components of Hurwitz spaces.
Finally, in \Cref{subsn:bcl}, we give various versions of the \emph{branch cycle lemma}, a fundamental result concerning the Galois action on covers and components.

\subsection{Notation}
\label{subsn:notation}

We use the following notation and terminology throughout the article.
In what follows, $G$ is a finite group and $K$ is a number field.
Number fields are always equipped with an embedding into $\Qbar$.
We denote by $\Gamma_K$ the absolute Galois group $\Gal(\Qbar \mid K)$.
The cyclotomic character is the group morphism $\chi : \Gamma_K \rightarrow \hat \Z^{\times}$ determined by the Galois action on roots of unity: if $\zeta \in \Qbar$ is an $n$-th root of unity and $\sigma \in \Gamma_K$, then $\sigma(\zeta)=\zeta^{\chi(\sigma)\mod n}$.


\subsubsection{Conventions.}
The cardinality of a set $X$ is denoted by $\card{X}$.
We write $g^h=h g h^{-1}$ for conjugation in a group.
We denote by $\ord(g)$ the order of an element $g$ in a finite group $H$.
If $g \in H$ and $\alpha \in \hat \Z$ is a profinite integer, $g^\alpha$ is the well-defined element $g^{\alpha \mod \ord(g)} \in H$.
Similarly, if $c \subseteq H$ is a conjugacy class, the order of its elements is denoted by $\ord(c)$, and $c^{\alpha}$ is the conjugacy class of the $\alpha$-th powers of elements of $c$, where $\alpha$ is either an integer or a profinite integer.

\begin{definition}
  \label{defn:rational-subset}
  A subset $c$ of $G$ is \emph{$K$-rational} if for every $g \in c$ and $\sigma \in \Gamma_K$ we have $g^{\chi(\sigma)} \in c$.
\end{definition}

If $K=\Q$, we have $\Ima(\chi) = \hat\Z^{\times}$.
Therefore, $\Q$-rational subsets are subsets closed under $n$-th powers for all $n$ coprime with $\card{G}$.
In contrast, if $K$ contains all $\card{G}$-th roots of unity, then the image of $\chi$ is trivial modulo $\card{G}$ and every subset of $G$ is $K$-rational.
Examples of sets which are always $K$-rational include $G$, $G \setminus \{1\}$, as well as any subset of $G$ consisting of involutions.

\subsubsection{Tuples.}
Tuples are denoted with underlined roman letters.
Let $\gbar = (g_1,\ldots,g_n)$ be a tuple of elements of $G$.
Then:
\begin{itemize}
  \item
    Its \emph{degree} or \emph{size} $\deg(\gbar)$ is the number $n$ of elements in the tuple.
  \item
    Its \emph{group} $\gen{\gbar}$ is the subgroup of $G$ generated by $g_1,\ldots,g_n$.
    If $\gbar_1, \ldots, \gbar_s$ are tuples, we denote by $\gen{\gbar_1,\ldots,\gbar_s}$ the subgroup of $G$ generated by the subgroups $\gen{\gbar_1}, \ldots, \gen{\gbar_s}$.
  \item
    The \emph{product} of $\gbar$ is $\pi\gbar = g_1 g_2 \cdots g_n \in G$.
    We say that $\gbar$ is a \emph{product-one tuple} if $\pi\gbar = 1$.
  \item
    Let $H$ be a subgroup of $G$ containing $\gen{\gbar}$.
    A conjugacy class $\gamma$ of $H$ \emph{appears in $\gbar$} if there is some $i$ for which $g_i \in \gamma$.
    The set of the conjugacy classes of $H$ which appear in $\gbar$ is denoted by $D_H(\gbar)$.
    We denote by $c_H(\gbar)$ the conjugation-invariant subset of $H$ obtained as the union of all classes in $D_H(\gbar)$.
    If $H$ is not specified in the notation, it is assumed that $H = \gen{\gbar}$.
  \item
    Let $H$ be a subgroup of $G$ which contains $\gen{\gbar}$ and $c$ be a conjugation-invariant subset of $H$ which contains $c_H(\gbar)$.
    If $\gamma$ is a conjugacy class of $H$ contained in $c$, we denote by $\mu_{H,c}(\gbar)(\gamma)$ the count of its appearances in $\gbar$, i.e.:
      \[
        \mu_{H,c}(\gbar)(\gamma)
        =
        \left \vert
          \left \lbrace
            i \in \{1,\ldots,n\}
            \bigverti
            g_i \in \gamma
          \right \rbrace
        \right \vert
        .
      \]
    
    This defines an integer-valued map $\mu_{H,c}(\gbar)$ on the set $D$ of all conjugacy classes of $H$ contained in $c$.
    We call this map the \emph{$(H,c)$-multidiscriminant of $\gbar$}.
    If $c$ is not specified in the notation $\mu_H(\gbar)$, it is assumed that $c = c_H(\gbar)$.
\end{itemize}

\subsubsection{Schemes.}
Let $L$ be a field.
Here, \emph{$L$-schemes} are separated schemes equipped with a morphism into $\Spec(L)$.
Let $L' \mid L$ be a field extension and $X$ be an $L$-scheme of finite type.
We denote by $X_{L'}$ the $L'$-scheme $X \underset{\Spec(L)}{\times} \Spec(L')$ obtained by extending the scalars.
The set $X(L')$ of $L'$-points of $X$ is the set of morphisms of $L$-schemes from $\Spec(L')$ to $X$, or equivalently morphisms of $L'$-schemes from $\Spec(L')$ to $X_{L'}$.
An $L'$-point $x \in X(L')$ is \emph{$L$-rational} if there exists an $L$-point $x' \in X(L)$ such that the following diagram of $L$-schemes commutes:
\[\begin{tikzcd}[ampersand replacement=\&]
	{\Spec(L')} \& {\Spec(L)} \& X
	\arrow[from=1-1, to=1-2]
	\arrow["{x'}", from=1-2, to=1-3]
	\arrow["x"', curve={height=22pt}, from=1-1, to=1-3]
\end{tikzcd}\]
The point $x'$ is called an \emph{$L$-model} of the point $x$.
Similarly, an $L'$-subscheme $Y$ of $X_{L'}$ is \emph{defined over $L$} if there exists an $L$-subscheme $Y'$ of $X$ such that the $L'$-subscheme $Y'_{L'}$ of $X_{L'}$ is equal to $Y$.
We say that $Y'$ is an \emph{$L$-model} of $Y$.

Assume now $L'\mid L$ is Galois.
An automorphism $\sigma \in \Gal(L' \mid L)$ induces an $L$-automorphism of $\Spec(L')$ which we denote by $\Spec(\sigma)$.
The group $\Gal(L' \mid L)$ acts on an $L'$-point $x \in X(L')$ by the formula $\sigma.x = x \circ \Spec(\sigma)$ and on an $L'$-subscheme $Y \subseteq X_{L'}$ by pullback along $\id_X \underset{\Spec(L)}{\times} \Spec(\sigma)$:
\[
  \begin{matrix}
    \begin{tikzcd}[ampersand replacement=\&]
	{\Spec(L')} \\
	{\Spec(L')} \& X
	\arrow["x"', from=2-1, to=2-2]
	\arrow["\Spec(\sigma)"', from=1-1, to=2-1]
	\arrow["{\sigma.x}", from=1-1, to=2-2]
\end{tikzcd}
&
&
\begin{tikzcd}[ampersand replacement=\&]
	{\sigma.Y} \& {X_{L'}} \\
	Y \& {X_{L'}}
	\arrow[from=1-1, to=2-1]
	\arrow[from=1-1, to=1-2]
	\arrow["\id_X \underset{\Spec(L)}{\times} \Spec(\sigma)", from=1-2, to=2-2]
	\arrow[from=2-1, to=2-2]
	\arrow["\bigg\lrcorner"{anchor=center, pos=0.125}, draw=none, from=1-1, to=2-2]
\end{tikzcd}
\end{matrix}
.\]

\begin{proposition}[Galois descent]
  \label{prop:galois-descent}
  The following equivalences hold:
  \begin{itemize}
    \item
      An $L'$-point of $X$ is $L$-rational if and only if it is invariant under the action of $\Gal(L'\mid L)$.
    \item
      An $L'$-subscheme $Y$ of $X_{L'}$ is defined over $L$ if and only if it is globally preserved by the action of $\Gal(L'\mid L)$, i.e. for every $\sigma \in \Gal(L'\mid L)$ there is an $L'$-automorphism $\sigma'$ of $Y$ such that the following diagram commutes:
      \[\begin{tikzcd}[ampersand replacement=\&]
        Y \& {X_{L'}} \\
        Y \& {X_{L'}}
        \arrow["{\sigma'}"', from=1-1, to=2-1]
        \arrow["\subseteq", hook, from=1-1, to=1-2]
        \arrow["\subseteq", hook, from=2-1, to=2-2]
        \arrow["\id_X \underset{\Spec(L)}{\times} \Spec(\sigma)", from=1-2, to=2-2]
      \end{tikzcd}.\]
    \item
      If $L'$ is algebraically closed and $Y$ is a reduced $L'$-subscheme of $X_{L'}$, then $Y$ is defined over $K$ if and only if the subset $Y(L')$ of $X(L')$ is globally preserved by the action of $\Gal(L'\mid L)$.
  \end{itemize}
\end{proposition}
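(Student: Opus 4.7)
The plan is to prove the three equivalences in order, moving from the pointwise statement (i) to the scheme-theoretic statement (ii), and finally to the ``set of points'' refinement (iii), which will be reduced to (ii).

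For (i), the ``only if'' direction is formal: if $x = x' \circ f$ for some $x' \in X(L)$ where $f : \Spec(L') \to \Spec(L)$ is the structural morphism, then $f \circ \Spec(\sigma) = f$ because $f$ is an $L$-morphism, whence $\sigma.x = x \circ \Spec(\sigma) = x' \circ f \circ \Spec(\sigma) = x$. For the converse, I would reduce to the affine case: a morphism $x : \Spec(L') \to X$ has image a single point $p \in X$, and any affine open neighborhood $U = \Spec(A) \subseteq X$ of $p$ contains the image; thus $x$ factors through $U$, corresponding to an $L$-algebra morphism $\phi_x : A \to L'$. The Galois action is $\sigma.x \leftrightarrow \sigma \circ \phi_x$, so invariance means $\phi_x$ lands in $(L')^{\Gal(L'\mid L)}$. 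By the classical fact that $(L')^{\Gal(L'\mid L)} = L$ for Galois extensions, $\phi_x$ factors through an $L$-algebra map $A \to L$, producing the desired $L$-point $x'$.

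For (ii), the ``only if'' direction is again direct: if $Y = Y'_{L'}$ for an $L$-subscheme $Y' \hookrightarrow X$, then the action of $\id_X \underset{\Spec(L)}{\times} \Spec(\sigma)$ restricts to $\id_{Y'} \underset{\Spec(L)}{\times} \Spec(\sigma)$ on $Y'_{L'}$, which globally preserves $Y$ (with $\sigma'$ being this restriction). The converse is the scheme-theoretic Galois descent of closed (or locally closed) subschemes: if $L'\mid L$ is Galois, subschemes of $X_{L'}$ stable under $\Gal(L'\mid L)$ descend to subschemes of $X$. In the finite Galois case this can be checked affine-locally using descent for quasi-coherent ideal sheaves (quasi-coherent sheaves on $X_{L'}$ equivariant for the Galois action are pullbacks from $X$). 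For the general case, one passes to a direct limit over finite sub-extensions $L \subseteq L_0 \subseteq L'$ with $L_0 \mid L$ Galois, noting that any subscheme cut out by finitely generated ideals on an affine cover is defined over some such $L_0$. I would reference a standard source (e.g.\ SGA 1 or the Stacks Project) rather than redo the descent argument.

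For (iii), observe that the ``only if'' direction follows from applying (ii) to the set $Y(L')$: if $Y$ is defined over $L$, the scheme-theoretic Galois action preserves $Y$ and therefore permutes its $L'$-points. For the converse, assume $L'$ is algebraically closed and $Y \subseteq X_{L'}$ is reduced, with $Y(L')$ globally $\Gal(L'\mid L)$-stable. Because $L'$ is algebraically closed and $Y$ is of finite type, the $L'$-points of $Y$ are Zariski-dense in $Y$; since $Y$ is reduced, $Y$ is determined as a subscheme by its underlying reduced closed subset, which in turn is the Zariski closure of $Y(L')$. The Galois stability of $Y(L')$ thus forces the Galois stability of $Y$ as a subscheme, reducing to (ii).

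The main obstacle will be the ``if'' direction of (ii) in the infinite Galois case; the rest reduces to standard manipulations. I would handle this by descending the defining ideals on a finite affine cover from $L'$ to some finite Galois sub-extension $L_0$ of $L$, then applying faithfully flat descent along $\Spec(L_0) \to \Spec(L)$, and would cite the standard references rather than reproduce the descent computation.
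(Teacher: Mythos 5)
The paper states this proposition as classical Galois descent and gives no proof of its own, so there is nothing to compare approaches against; your sketch is the standard argument and is sound in substance: (i) via the fixed-field property $(L')^{\Gal(L'\mid L)}=L$ after factoring the point through an affine open, (ii) via spreading out to a finite Galois subextension $L_0$ (using that $Y$ is of finite type, hence defined by finitely many equations) followed by finite Galois descent, and (iii) by reducing to (ii) through density of $L'$-points. One small imprecision in (iii): if $Y$ is only locally closed, its underlying set is not the Zariski closure of $Y(L')$; rather, since $X_{L'}$ is Jacobson and $L'$ is algebraically closed, a locally closed subset is determined by its closed points, so Galois stability of $Y(L')$ still forces stability of the underlying set and hence of $Y$ with its reduced structure. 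This is harmless for the paper's use, where $Y$ is a (closed) geometrically irreducible component, in which case your closure argument is literally correct.
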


\subsection{Main objects}
\label{subsn:main-objects}:

In this section, we define the main objects which are used in the text.
Configuration spaces (\Cref{subsubsn:conf}), $G$-covers (\Cref{subsubsn:top-gcovers,subsubsn:algcovers}) and Hurwitz spaces (\Cref{subsubsn:top-hur,subsubsn:hur-sch}) are introduced, both in the topological and in the algebraic settings, and the links between the two contexts are explicited.
The combinatorial description of $G$-covers and their components is also stressed (\Cref{subsubsn:top-hur,subsubsn:monoid-comp,subsubsn:braidact}).

\subsubsection{Configurations and braid groups.}
\label{subsubsn:conf}

A \emph{configuration} $\tbar = \{t_1,\ldots,t_n\}$ is an unordered list of $n$ distinct complex numbers.
Configurations form a topological space $\Conf_n(\C)$, whose topology is inherited from the standard topology on $\C^n$ after removing tuples with equal elements and quotienting out by the action of $\Sym_n$.
The fundamental group of $\Conf_n(\C)$ is the \emph{Artin braid group} $\B_n$.
It admits a simple presentation: generators are given by the \emph{elementary braids} $\sigma_1, \ldots, \sigma_{n-1}$ satisfying the following generating set of relations:
\begin{itemize}
  \item
    $\sigma_i \sigma_j = \sigma_j \sigma_i$ for all $i,j \in \{1,\ldots,n-1\}$ satisfying $|i-j| > 1$;
  \item
    $\sigma_i \sigma_{i+1} \sigma_i = \sigma_{i+1} \sigma_i \sigma_{i+1}$ for all $i \in \{1,\ldots,n-2\}$.
\end{itemize}

A configuration $\tbar \in \Conf_n(\C)$ is \emph{defined over $K$} if the elements $t_1,\ldots,t_n$ are all algebraic and are permuted by the Galois action of $\Gal(\Qbar\mid K)$.
We denote by $\Conf_n(K)$ the set of configurations of $\Conf_n(\C)$ defined over $K$.

The space of configurations has a scheme counterpart.
Indeed, fixing a configuration $\tbar = \{t_1,\ldots,t_n\}$ amounts to fixing the monic polynomial $(X-t_1)\cdots(X-t_n)$, of degree $n$ with no double roots; we can parametrize these polynomials by their coefficients instead of their roots: the scheme $\Conf_n$ is the open subvariety of $\mathbb{A}^n$ obtained by removing the closed Zariski subset $\Delta$ defined by the polynomial equation ``the discriminant of $X^n + a_1 X^{n-1} + \ldots + a_{n-1} X + a_n$ cancels''.
The $K$-points of $\Conf_n$ are in natural bijection with the configurations of $n$ points defined over $K$, and its $\C$-points are precisely the elements of $\Conf_n(\C)$, which makes the notation unambiguous.

\subsubsection{Topological $G$-covers and tuples.}
\label{subsubsn:top-gcovers}

In this article, we consider branched $G$-covers of the projective line.
Let $\tbar \in \Conf_n(\C)$ be a configuration.
Topological $G$-covers branched at $\tbar$ are covering maps $p:X \to \P^1(\C)\setminus\tbar$ equipped with a morphism $G \to \Aut(p)$ that induces a free transitive action on each fiber.
Note that $G$-covers are necessarily Galois covers of degree $\card{G}$.
We do not assume that these covers are connected, and we allow trivial ramification at the ``branch points'', even if the $G$-cover can be extended into a $G$-cover with less branch points.

A \emph{marked $G$-cover} is a $G$-cover equipped with a marked point in the unramified fiber above the point at infinity $\infty$.
The monodromy based at $\infty$ associates to every marked $G$-cover $p$ branched at $\tbar$ a morphism:
  \[ \pi_1(\P^1(\C)\setminus\{t_1,\ldots,t_n\}, \infty) \rightarrow G \]
which uniquely characterizes the isomorphism class of the marked $G$-cover $p$.

Choose a topological bouquet $\gamma_1, \ldots, \gamma_n$, as defined in \autocite[Paragraph 1.1]{DebEms}.
This is a set of generators of $\pi_1(\P^1\setminus\{t_1,\ldots,t_n\}, \infty)$, where $\gamma_i$ is the homotopy class of a loop which rotates once counterclockwise around $t_i$.
The relations between these generators are generated by the single relation $\gamma_1\cdots\gamma_n=1$.
The choice of a bouquet induces a bijection between isomorphism classes of marked $G$-covers branched at $\tbar$ and $n$-tuples $\gbar = (g_1,\ldots,g_n)$ of elements of $G$ which are \emph{product-one}, i.e. such that $\pi\gbar = g_1 \cdots g_n = 1$.
We say that $(g_1,\ldots,g_n)$ is the \emph{branch cycle description} of the marked $G$-cover.
Via this description, we have a dictionary between geometric and group-theoretic/combinatorial objects:
\begin{itemize}
  \item
    The monodromy group of the marked $G$-cover, i.e. the automorphism group of the connected component of its marked point, is $\gen{\gbar} = \gen{g_1,\ldots,g_n}$.
    In particular, the $G$-cover is connected exactly when $g_1,\ldots,g_n$ generate $G$.
  \item
    unmarked $G$-covers correspond to orbits of product-one tuples under the conjugation action of $G$:
      \[ (g_1,\ldots,g_n)^{\gamma} = (g_1^{\gamma},\ldots,g_n^{\gamma}) \text{ for } \gamma \in G. \]
    This conjugation action corresponds to the free transitive action of $G$ on the fiber above the basepoint $\infty$.
    It amounts to a change of marked point. 
\end{itemize}

\begin{remark}
  We include non-connected $G$-covers, i.e. covers whose monodromy groups are proper subgroups of $G$, because we are interested in patching-like results.
  Typically, we want to construct components with monodromy group $G$ by gluing components with smaller monodromy groups.
  If we do not take this phenomenon into account, the answer to \Cref{qn:prod-defined} is ``yes'': the concatenation of two components defined over $K$ of connected $G$-covers is always defined over $K$.
  This follows from \Cref{thmitem:main-prod-permuting-defK}.
  In \autocite{Cau}, a different but equivalent choice is made: instead of considering components of marked non-connected $G$-covers, Cau considers components of unmarked connected $H$-covers where $H$ is a subgroup of $G$.
  The links between these two approaches are discussed further in \Cref{subsubsn:marked-unmarked}.
\end{remark}

\subsubsection{Topological Hurwitz spaces and their components.}
\label{subsubsn:top-hur}

Unless specified otherwise, Hurwitz spaces in this article are moduli spaces of \emph{marked} $G$-covers, connected or not.
We denote by $\Hur^*(G,n)$ the topological Hurwitz space of marked $G$-covers with $n$ branch points.
It is a covering space of $\Conf_n(\C)$ whose fiber above some configuration $\tbar \in \Conf_n(\C)$ consists of isomorphism classes of marked $G$-covers of $\PC \setminus \tbar$.

Classically, there is an action of the braid group $\B_n$ on $n$-tuples of elements of $G$, induced by the following formula:
\[
  \sigma_i.(g_1, \ldots, g_n) = (g_1,\ldots, g_{i-1}, g_{i+1}^{g_i}, g_i, \ldots, g_n).
\]
We say that two $n$-tuples of elements of $G$ are \emph{braid equivalent} when they are in the same $\B_n$-orbit for this action.
The connected components of $\Hur^*(G,n)$ are in bijection with braid group orbits of product-one $n$-tuples $\gbar = (g_1,\ldots,g_n)$ of elements of $G$.
Whereas the branch cycle description $\gbar$ of a cover depends on the choice of a bouquet, the braid group orbit of $\gbar$ does not: this is due to the fact that $\B_n$ acts transitively on topological bouquets up to conjugation \autocite{DebEms}.
Thus, there is a \emph{canonical} bijection between $\B_n$-orbits of product-one $n$-tuples of elements of $G$ and connected components of $\Hur^*(G,n)$.

\subsubsection{The monoid of components.}
\label{subsubsn:monoid-comp}

Via the description of connected components of $\Hur^*(G,n)$ as $\B_n$-orbits of product-one $n$-tuples of elements of $G$, the concatenation of tuples induces a well-defined product operation on components of Hurwitz spaces:
  \[
    (g_1,\ldots,g_n)(g'_1,\ldots,g'_{n'})=(g_1,\ldots,g_n,g'_1,\ldots,g'_{n'}).
  \]
The graded \emph{monoid of components} $\Comp(G)$ is:
\[
  \bigsqcup_{n \geq 0}
  \left (
    \bigslant
    {
    \left \lbrace
      \gbar \in G^n
      \verti
      \pi\gbar = 1
    \right \rbrace
    }
    {\B_n}
  \right ),
\]
graded by the size $n$ of a tuple, and equipped with the product operation induced by concatenation.
Elements of degree $n$ of $\Comp(G)$ are in bijection with connected components of $\Hur^*(G,n)$.\footnote{
  For us, the \emph{degree} of a component is its degree as an element of the graded monoid $\Comp(G)$, i.e. the number of branch points of the covers it contains, which is the size of its representing tuples.
  This is also equal to the \emph{dimension} of the component since $\Hur^*(G,n)$ is a finite cover of the $n$-dimensional space $\Conf_n(\C)$.
}
The identity element of $\Comp(G)$ is the braid orbit of the empty tuple, i.e. the connected component of the trivial $G$-cover.
The monoid $\Comp(G)$ is commutative and finitely generated, cf. \autocite[Subsection 3.2]{geomringcomp}.
We often abusively refer to an element of $\Comp(G)$ by one of its representing tuples $\gbar \in G^n$.

\subsubsection{The braid group action.}
\label{subsubsn:braidact}

We recall results about the braid group action on tuples:
\begin{proposition}
  \label{prop:braid-results}
  ~
  \begin{propenum}
    \item
      The product, group, multidiscriminant of a tuple depend only on its braid group orbit.
    \item
      The braid group orbit of a concatenation of tuples depends only on their braid group orbits.
    \item
      If $\gbar$ and $\gbar'$ are product-one tuples, then the concatenated tuples $\gbar\gbar'$ and $\gbar'\gbar$ are braid equivalent.
      
    \item
      \label{propitem:braid-conjugate}
      If $\gbar_1, \gbar_2, \gbar_3$ are product-one tuples, then $\gbar_1\gbar_2\gbar_3$ is braid equivalent to $\gbar_1\gbar_2^{\gamma}\gbar_3$ for any $\gamma$ which is either in $\gen{\gbar_1, \gbar_3}$ or in $\gen{\gbar_2}$.

  \end{propenum}
\end{proposition}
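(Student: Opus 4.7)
My plan is to verify each part by inspecting the action of the defining braid generators and, for parts (iii) and (iv), to assemble explicit braid words whose action on tuples realises the desired identities. For (i), the replacement $(g_i, g_{i+1}) \mapsto (g_{i+1}^{g_i}, g_i)$ performed by $\sigma_i$ preserves the product (via $g_i g_{i+1} = g_{i+1}^{g_i} g_i$), the generated subgroup (via $\gen{g_i, g_{i+1}} = \gen{g_{i+1}^{g_i}, g_i}$), and every conjugacy-class count (since $g_{i+1}^{g_i}$ is conjugate to $g_{i+1}$); these invariants are therefore preserved by every braid. For (ii), the standard embedding $\B_n \times \B_{n'} \hookrightarrow \B_{n+n'}$ acting on the first $n$ and last $n'$ positions respectively shows that the braid-orbit of $\gbar\gbar'$ depends only on those of $\gbar$ and $\gbar'$. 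For (iii), I would exhibit an explicit block-swap braid $\beta \in \B_{n+n'}$ built by iterating the ``push to the front'' braid $\sigma_1 \sigma_2 \cdots \sigma_{n+j-1}$ for each of the $n'$ entries of $\gbar'$; a direct inductive computation shows that $\beta.(\gbar\gbar')$ equals $(\gbar')^{\pi\gbar} \cdot \gbar$, and the product-one hypothesis on $\gbar$ collapses the conjugation to produce $\gbar'\gbar$.

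For (iv) the two sub-cases require different tools, both anchored in the \emph{full twist} $\Delta^2 \in \B_m$. The essential fact I will use is that the full twist on a contiguous block of positions, viewed as an element of the ambient braid group, acts on any tuple by conjugating each entry of that block by the product of all entries in the block (and leaves other positions fixed). For $\gamma \in \gen{\gbar_1, \gbar_3}$, applying $\Delta^2$ to the positions of (a right-suffix of $\gbar_1$) concatenated with $\gbar_2$ conjugates those entries by $(g_{i_0}\cdots g_{n_1}) \cdot \pi\gbar_2 = g_{i_0}\cdots g_{n_1}$, using $\pi\gbar_2 = 1$. Composing with $\Delta^{-2}$ on the $\gbar_1$-suffix alone restores the $\gbar_1$-positions to their original values (each is reconjugated by $\mu^{-1}$, where $\mu = g_{i_0}\cdots g_{n_1}$), while leaving $\gbar_2$ globally conjugated by $\mu$. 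Varying $i_0$ from $1$ to $n_1+1$ realises conjugation by every right-suffix product of $\gbar_1$; these generate $\gen{\gbar_1}$ under products and inverses. A symmetric argument on $\gbar_3$ yields conjugation by every element of $\gen{\gbar_3}$, and composition then covers all of $\gen{\gbar_1, \gbar_3}$. For $\gamma \in \gen{\gbar_2}$, I would reduce to a sub-lemma: for any product-one tuple $\gbar$ of length $m$ and any $\gamma \in \gen{\gbar}$, the tuples $\gbar$ and $\gbar^\gamma$ lie in the same $\B_m$-orbit; together with (ii) this proves the statement. The sub-lemma itself follows by reducing to $\gamma$ equal to a single entry $h_i$ of $\gbar$ and exhibiting an explicit pure braid (for instance, a suitable power of $\sigma_i$ pre- and post-composed with the cyclic-rotation braids on a product-one tuple) whose action realises the conjugation $\gbar \mapsto \gbar^{h_i^{\pm 1}}$; the product-one hypothesis is what makes the extraneous conjugators collapse to a single element.

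The main technical obstacle should be the symbolic bookkeeping in (iv). Verifying the full-twist formula on a sub-block and checking that the inverse-twist correction restores the neighbouring positions exactly requires careful tracking of conjugators, as does the sub-lemma used for $\gen{\gbar_2}$, where one must confirm that the chosen pure braids do realise the claimed conjugation for every entry $h_i$ of $\gbar_2$ and then extend multiplicatively. The remaining parts reduce to routine verifications on the generators of $\B_n$.
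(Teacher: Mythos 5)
The paper does not actually prove this proposition in the text: it states that proofs can be found in \autocite{geomringcomp} (same notation) or \autocite{Cau} (different notation), with a specific pointer to \autocite[Corollary 3.5]{geomringcomp} for part (iv). So there is no ``paper's own proof'' to compare against directly; what you have produced is a self-contained argument.

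Your argument is correct, and the route is the standard one. Parts (i) and (ii) are routine verifications on the generators $\sigma_i$ and the block embedding $\B_n\times\B_{n'}\hookrightarrow\B_{n+n'}$, as you say. For (iii), the push-to-the-front braid produces $(\gbar')^{\pi\gbar}\gbar$, and $\pi\gbar=1$ collapses it; note only $\pi\gbar=1$ is used, which is fine since the hypothesis is stronger. For (iv) with $\gamma\in\gen{\gbar_1,\gbar_3}$, the two-step twist is sound: after the big twist the $\gbar_1$-suffix entries are $g_j^{\mu}$ with $\mu=g_{i_0}\cdots g_{n_1}$, and their product is $\mu^\mu=\mu$, so $\Delta^{-2}$ on the suffix alone conjugates them back by $\mu^{-1}$ and leaves $\gbar_2^{\mu}$ intact; varying $i_0$ (and using the inverse order of twists) recovers conjugation by the suffix-products and their inverses, which generate $\gen{\gbar_1}$, and the symmetric construction on $\gbar_3$ composes to give all of $\gen{\gbar_1,\gbar_3}$. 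For $\gamma\in\gen{\gbar_2}$, the reduction via the sub-lemma plus (ii) is the right structure.

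The only place I'd push you to tighten is your description of the braid realising the sub-lemma. ``A suitable power of $\sigma_i$ pre- and post-composed with cyclic-rotation braids'' is vague, and a literal power of a single $\sigma_i$ only conjugates two adjacent entries by their own product, which doesn't directly globalise. Two cleaner realisations, both using exactly the full-twist lemma you already rely on: either observe that for a product-one $m$-tuple, $\Delta^{-2}_{\{2,\dots,m\}}$ conjugates positions $2,\dots,m$ by $(g_2\cdots g_m)^{-1}=g_1$ and fixes position $1$ (but $g_1=g_1^{g_1}$), so its action is exactly $\gbar\mapsto\gbar^{g_1}$; or note that the pure braid $(\sigma_1\cdots\sigma_{m-1})(\sigma_{m-1}\cdots\sigma_1)$ (strand $1$ looping around the others) sends $\gbar$ to $\gbar^{g_1}$. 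In either case you then transport from $g_1$ to $g_i$ by a cyclic-rotation braid and close up by generation, using that products (hence, by finiteness of $\gen{\gbar}$, also inverses) of entries exhaust $\gen{\gbar}$.
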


Proofs can be found with this notation in \autocite{geomringcomp}, or with different notation in \autocite{Cau}.
Together, points (i), (ii) and (iii) ensure that the monoid of components $\Comp(G)$ is a well-defined commutative monoid, and we can talk about the group and multidiscriminant of a component $x \in \Comp(G)$.
We generalize the use of the symbols $\gen{x}, \mu_{H,c}(x), c_H(x)$ from \Cref{subsn:notation} to elements $x \in \Comp(G)$.

For point (iv), see \autocite[Corollary 3.5]{geomringcomp}.
This result is central in the proofs of later results, notably \Cref{thm:galois-permuting}.
The case where $\gbar_1$ and $\gbar_3$ are both equal to the $0$-tuple is used frequently: if $x \in \Comp(G)$ and $\gamma \in \gen{x}$, then $x = x^{\gamma}$.

\subsubsection{Algebraic $k$-$G$-covers and Riemann's existence theorem.}
\label{subsubsn:algcovers}

Let $k$ be a field of characteristic relatively prime to $\card{G}$.
An \emph{algebraic cover} of $\P^1_k$ is a finite flat generically étale morphism from a smooth projective curve $Y$ over $k$ to $\P^1_k$.
Moreover, we exclude from this definition algebraic covers ramified at the point at infinity.
The $\bar k$-points of $\P^1$ over which an algebraic cover of $\P^1_k$ is ramified (i.e. the corresponding extension of local rings is ramified) form a finite configuration $\tbar \in \Conf_n(k)$, for some $n$.

A \emph{$k$-$G$-cover} is an algebraic cover $p:Y \to \P^1_k$ equipped with a group morphism from $G$ to the group $\Aut_{\P^1_k}(Y)$ of $k$-automorphisms of the cover, such that the induced action of $G$ on every unramified geometric fiber is free and transitive.

If $p:Y \rightarrow \P^1_k$ is a $k$-$G$-cover with $Y$ irreducible, then the induced extension of function fields $k(Y) \mid k(T)$ is Galois of group $G$.
If $Y$ is geometrically irreducible, this extension is \emph{regular}, i.e. $k(Y) \cap \bar k = k$.
Classically, this defines an equivalence of categories between Galois field extensions of $k(T)$ of group $G$ and $k$-$G$-covers.

Riemann's existence theorem implies that the category of $\C$-$G$-covers (or $\Qbar$-$G$-covers) branched at some configuration $\tbar \in \Conf_n(\C)$ (resp. $\tbar \in \Conf_n(\Qbar)$) is equivalent to that of topological $G$-covers with the same branch points.
We identify topological $G$-covers and $k$-$G$-covers over an algebraically closed field $k$ of characteristic zero quite freely.

Assume now $k$ is contained in $\Qbar$.
Since topological $G$-covers are well-understood, we look for \emph{$G$-covers defined over $k$} instead of $k$-$G$-covers: a $\Qbar$-$G$-cover $p : Y \rightarrow \P^1_{\Qbar}$ is \emph{defined over $k$} if it is isomorphic to the extension of scalars of a $k$-$G$-cover $p_k : Y' \to \P^1_k$.
In that case, we say that $p_k$ is a \emph{$k$-model} of the $G$-cover $p$.

\begin{remark}
  When $k$ is not algebraically closed, we avoid the ambiguous expression ``marked $k$-$G$-cover'': it is not clear whether the cover is equipped with a geometric marked point above $\infty$ (these are the $k$-$G$-covers classified by the étale fundamental group of $\P^1_k\setminus\tbar$ based at $\infty$), or with a marked $k$-point above $\infty$ (these are the marked $G$-covers invariant under the action of $\Gal(\Qbar \mid k)$).
  We mostly use the latter notion, and we refer to these covers as \emph{$k$-$G$-covers equipped with a marked $k$-point}, meaning that the marked point is in the fiber above the point at infinity.

  When $k$ is algebraically closed, we do use the terminology \emph{marked $k$-$G$-cover} since a marked geometric point above $\infty$ is also a $k$-point.

  We say that a marked $\Qbar$-$G$-cover $(p,\star)$ is \emph{defined over $k$} when it is invariant under the action of $\Gal(\Qbar\mid k)$, i.e. when it is isomorphic, as a marked $\Qbar$-$G$-cover, to the extension of scalars of $k$-$G$-cover equipped with a marked $k$-point.
  This will be defined again differently in \Cref{subsubsn:fielddef-cover}.
\end{remark}

\subsubsection{Hurwitz schemes.}
\label{subsubsn:hur-sch}

We denote by $\Hn^*(G,n)$ the Hurwitz scheme of marked $G$-covers of $\P^1$ with $n$ branch points, unramified at $\infty$.
Via the branch point morphism, the $\Q$-scheme $\Hn^*(G,n)$ is an étale cover of $\Conf_n$.
The $K$-points of $\Hn^*(G,n)$ correspond to algebraic $K$-$G$-covers branched at some configuration $\tbar \in \Conf_n(K)$, equipped with a marked $K$-point.
The existence of this moduli space follows from the same arguments as \autocite[Theorem 4.11]{rw}.
This is always a fine moduli space, unlike Hurwitz schemes of unmarked covers.
The set of $\C$-points of $\Hn^*(G,n)$, equipped with the analytic topology, is homeomorphic to $\Hur^*(G,n)$.
The geometrically irreducible components of $\Hn^*(G,n)$ are in bijection with the connected components of $\Hur^*(G,n)$, and consequently with $\B_n$-orbits of product-one $n$-tuples of elements of $G$.

The more usual Hurwitz moduli scheme of branched unmarked $G$-covers of $\P^1$ is denoted by $\Hn(G,n)$.
We use it exclusively in the discussion of \Cref{subsubsn:marked-unmarked} which relates the fields of definition of components of $\Hn^*(G,n)$ to the more classical question of the fields of definition of components of geometrically connected unmarked covers.
One thing to know is that contrary to $\Hn^*(G,n)$, the Hurwitz scheme $\Hn(G,n)$ is a \emph{coarse} moduli scheme in general: its $K$-points do not correspond precisely to $K$-$G$-covers with $n$ branch points.
These issues are detailed in \autocite{modulidef}.

\subsection{The Galois action}
\label{subsn:galact}

In this subsection, we describe the Galois action of $\Gamma_K = \Gal(\Qbar \mid K)$ on the $\Qbar$-points of the scheme $\Hn^*(G,n)$ and on its geometrically irreducible components.

We fix a configuration $\tbar \in \Conf_n(K)$, defined over $K$.
We denote by $\pi_{1,\Qbar}$ the étale fundamental group $\piet(\P^1_{\Qbar} \setminus \tbar, \infty)$ and by $\pi_{1,K}$ the étale fundamental group $\piet(\P^1_K \setminus\tbar, \infty)$.

The group $\pi_{1,\Qbar}$ is isomorphic to the profinite completion of $\pi_1(\PC\setminus \tbar, \infty)$.
Since $G$ is finite, there is a bijection between morphisms $\pi_{1,\Qbar} \rightarrow G$ and morphisms $\pi_1(\PC \setminus \tbar, \infty) \rightarrow G$.
Hence, isomorphism classes of marked $G$-covers branched at $\tbar$ may and will be seen as morphisms $\pi_{1,\Qbar} \rightarrow G$.

\subsubsection{The Galois action on covers.}
Let $\overline{\Q(T)}$ be an algebraic closure of $\Q(T)$ containing $\bar\Q$.
Let $\Omega_{\tbar}$ be the maximal subfield of $\overline{\Q(T)}$ unramified outside of $\tbar$.
We have the chain of field extensions:
\[
  \begin{tikzcd}[ampersand replacement=\&]
    {\Omega_{\tbar}} \\
    {\Qbar(T)} \\
    {K(T)}
    \arrow["{\pi_{1,\Qbar}}", no head, from=2-1, to=1-1]
    \arrow["\Gamma_K", no head, from=3-1, to=2-1]
    \arrow["{\pi_{1,K}}"', curve={height=24pt}, no head, from=3-1, to=1-1]
  \end{tikzcd}
\]
which induces the following short exact sequence:
\begin{equation}
  \label{eq:short-ex-seq}
  \begin{tikzcd}[ampersand replacement=\&]
    1 \& {\pi_{1,\Qbar}} \& {\pi_{1,K}} \& \Gamma_K \& 1
    \arrow[from=1-1, to=1-2]
    \arrow[from=1-2, to=1-3]
    \arrow[from=1-3, to=1-4]
    \arrow[from=1-4, to=1-5]
  \end{tikzcd}
  .
\end{equation}

The field $\Omega_{\tbar}$ embeds in $\overline{\Q(T)}$ which itself embeds in the field of Puiseux series over $\Qbar$, denoted by $\Qbar (((1/T)^{1/\infty}))$.
The Galois group $\Gamma_K = \Gal(\Qbar\mid K)$ acts on $\Qbar (((1/T)^{1/\infty}))$ by acting on the coefficients.
Let $\sigma \in \Gamma_K$.
Since the configuration $\tbar$ is defined over $K$, the image under $\sigma$ of an extension unramified outside of $\tbar$ is still unramified outside of $\tbar$.
This implies that the field $\Omega_{\tbar}$ is stable under the action of $\Gamma_K$.
So there is an action of $\Gamma_K$ on $\Omega_{\tbar}$, trivial on $K(T)$.
This defines a morphism:
  \[ s : \Gamma_K \rightarrow \Gal \left( \Omega_{\tbar} \mid K(T) \right) \simeq \pi_{1,K} \]
associated with the choice of the basepoint (here, the point at infinity).
The morphism $s$ is a section of the short exact sequence of \Cref{eq:short-ex-seq}:
  \[
    \begin{tikzcd}[ampersand replacement=\&]
      1 \& {\pi_{1,\Qbar}} \& {\pi_{1,K}} \& \Gamma_K \& 1.
      \arrow[from=1-1, to=1-2]
      \arrow[from=1-2, to=1-3]
      \arrow[from=1-3, to=1-4]
      \arrow[from=1-4, to=1-5]
      \arrow["s"{description}, curve={height=-12pt}, from=1-4, to=1-3]
    \end{tikzcd}
  \]
Using the morphism $s$, we define an action of $\Gamma_K$ on $\pi_{1,\Qbar}$.
For all $\sigma \in \Gamma_K$ and $\gamma \in \pi_{1,\Qbar}$, we let:
  \[
    \sigma.\gamma
    \eqdef
    \gamma^{s(\sigma)}
  \]
which belongs to $\pi_{1,\Qbar}$ by normality.
The action of an automorphism $\sigma \in \Gamma_K$ on a marked $G$-cover seen as a morphism $\varphi : \pi_{1,\Qbar} \rightarrow G$ is defined by the following equality for all $\gamma \in \pi_{1, \Qbar}$:
  \[
    (\sigma.\varphi)(\gamma)
    = \varphi(\sigma.\gamma)
    = \varphi \left( \gamma^{s(\sigma)} \right).
  \]
This action does not change the monodromy group of a marked cover branched at $\tbar$.
It satisfies $\sigma.(\varphi^g)=(\sigma.\varphi)^g$ for all $g \in G$, and so it induces a well-defined action of $\Gamma_K$ on isomorphism classes of unmarked $G$-covers branched at $\tbar$.

\subsubsection{Fields of definition of covers.}
\label{subsubsn:fielddef-cover}

Consider an isomorphism class of marked branched $G$-covers, seen as a morphism $\varphi : \pi_{1,\Qbar} \rightarrow G$.

\begin{definition}
  \label{def:fielddef}
  The marked $G$-cover associated to $\varphi$ is \emph{defined over $K$} if $\sigma.\varphi = \varphi$ for all $\sigma \in \Gamma_K$.
\end{definition}



The equivalence with the definition given in \Cref{subsubsn:algcovers} (marked $G$-covers defined over $K$ are obtained by extension of scalars of $K$-$G$-covers equipped with a marked $K$-point) follows from the properties of the étale fundamental group and from the following proposition\footnote{
  In some sense, this is also an instance of \Cref{prop:galois-descent}.
}:

\begin{proposition}
  \label{prop:defK-factorize}
  The marked cover associated to $\varphi : \pi_{1,\Qbar} \to G$ is defined over $K$ if and only if the morphism $\varphi$ has an extension to $\pi_{1,K}$ which is trivial on $\Ima(s)$, i.e. there exists a group morphism $\tilde \varphi : \pi_{1,K} \rightarrow G$ such that the following diagram commutes:
    \[\begin{tikzcd}[ampersand replacement=\&]
      {\pi_{1,\Qbar}} \& {} \\
      {\pi_{1,K}} \& G \\
      \Gamma_K \& 1
      \arrow["s", from=3-1, to=2-1]
      \arrow["{\tilde \varphi}"', from=2-1, to=2-2]
      \arrow[from=3-2, to=2-2]
      \arrow[from=3-1, to=3-2]
      \arrow[from=1-1, to=2-1]
      \arrow["\varphi", from=1-1, to=2-2]
    \end{tikzcd}\]
\end{proposition}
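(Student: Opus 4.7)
The plan is to exploit the splitting $s$ of \eqref{eq:short-ex-seq} to realize $\pi_{1,K}$ as an internal semi-direct product $\pi_{1,\Qbar} \rtimes \Ima(s)$. Concretely, every element of $\pi_{1,K}$ admits a unique decomposition $\gamma \cdot s(\sigma)$ with $\gamma \in \pi_{1,\Qbar}$ and $\sigma \in \Gamma_K$, and the conjugation of $\Ima(s)$ on the normal subgroup $\pi_{1,\Qbar}$ is, by construction, the Galois action: $s(\sigma) \gamma s(\sigma)^{-1} = \sigma.\gamma$.

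For the implication $(\Leftarrow)$, assume that some $\tilde\varphi$ as in the statement exists. For every $\gamma \in \pi_{1,\Qbar}$ and $\sigma \in \Gamma_K$, using $\tilde\varphi(s(\sigma)) = 1$ together with $\tilde\varphi|_{\pi_{1,\Qbar}} = \varphi$, we compute
\[
(\sigma.\varphi)(\gamma)
= \varphi\bigparj{s(\sigma)\gamma s(\sigma)^{-1}}
= \tilde\varphi(s(\sigma)) \cdot \varphi(\gamma) \cdot \tilde\varphi(s(\sigma))^{-1}
= \varphi(\gamma),
\]
so $\sigma.\varphi = \varphi$ for every $\sigma \in \Gamma_K$ and the cover is defined over $K$.

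For the converse $(\Rightarrow)$, I would simply set $\tilde\varphi(\gamma \cdot s(\sigma)) \eqdef \varphi(\gamma)$; this is unambiguous by uniqueness of the decomposition, extends $\varphi$, and vanishes on $\Ima(s)$ by construction. The only nontrivial point is multiplicativity. The semi-direct product relation $s(\sigma_1) \gamma_2 s(\sigma_1)^{-1} = \sigma_1.\gamma_2$ yields
\[
(\gamma_1 s(\sigma_1)) \cdot (\gamma_2 s(\sigma_2)) = \gamma_1 (\sigma_1.\gamma_2) \, s(\sigma_1\sigma_2),
\]
so multiplicativity of $\tilde\varphi$ reduces to the identity $\varphi(\sigma_1.\gamma_2) = \varphi(\gamma_2)$, which is exactly the hypothesis $\sigma_1.\varphi = \varphi$ evaluated at $\gamma_2$. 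Continuity of $\tilde\varphi$ then follows because $\ker(\tilde\varphi) = \ker(\varphi) \cdot \Ima(s)$ is a finite-index subgroup of $\pi_{1,K}$ (as $\tilde\varphi$ factors through the finite group $G$) and is closed (being the product of the closed subgroup $\ker(\varphi)$ with the compact subgroup $\Ima(s)$), hence open.

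The main obstacle is purely notational: one must track the convention $g^h = hgh^{-1}$ carefully, so that $\sigma.\gamma = s(\sigma)\gamma s(\sigma)^{-1}$ matches the semi-direct product multiplication required in the $(\Rightarrow)$ direction. Beyond this bookkeeping, the content is just the general principle that a morphism from a semi-direct product $N \rtimes H$ into any group is determined by its restrictions to $N$ and $H$, subject to the obvious $H$-equivariance constraint on the restriction to $N$.
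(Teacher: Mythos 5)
Your argument is correct and is essentially the paper's own proof: writing $x \in \pi_{1,K}$ uniquely as $\gamma\, s(\sigma)$ with $\gamma = x\,s(w(x))^{-1}$ and setting $\tilde\varphi(\gamma\, s(\sigma)) = \varphi(\gamma)$ is exactly the map $\tilde\varphi(x) = \varphi\bigparj{x\,s(w(x))^{-1}}$ used there, and your multiplicativity check via $s(\sigma_1)\gamma_2 s(\sigma_1)^{-1} = \sigma_1.\gamma_2$ is the same computation. Your added remark on continuity (closed plus finite index implies open) is a sound supplement that the paper leaves implicit.
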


The triviality of $\tilde \varphi$ on $\Ima(s)$ corresponds to the $K$-rationality of the marked point above $\infty$.

\begin{proof}
  Give names to the morphisms in the exact sequence \Cref{eq:short-ex-seq}:
    \[ \pi_{1,\Qbar} \overset{\iota}{\hookrightarrow} \pi_{1,K} \overset{w}{\twoheadrightarrow} \Gamma_K \]
  and remember that  $s : \Gamma_K \rightarrow \pi_{1,K}$ is a section of $w$, i.e. $w \circ s = \id_{\pi_{1,K}}$.
  
  \begin{itemize}
  \item[($\Leftarrow$)]
    Assume there is a morphism $\tilde \varphi : \pi_{1, K} \rightarrow G$ such that $\varphi = \tilde \varphi \circ \iota$ and $\tilde\varphi \circ s = 1$.
    For $x \in \pi_{1,\Qbar}$ and $\sigma \in \Gamma_K$, compute:
      \[
        \sigma.\varphi(x) =
        \varphi \left (
          x^{s(\sigma)}
        \right ) =
        \tilde \varphi \left (
          x^{s(\sigma)}
        \right ) =
        \tilde \varphi(x)^{\tilde \varphi (s(\sigma))} =
        \tilde \varphi (x) =
        \varphi(x).
      \]
  
  \item[($\Rightarrow$)]
    Assume $\varphi$ is defined over $K$.
    Let $x \in \pi_{1,K}$. We have:
      \[ w \left (x s(w(x))^{-1} \right ) = w(x) w(s(w(x))^{-1} = w(x)w(x)^{-1} = 1, \]
    which implies that $x s(w(x))^{-1} \in \pi_{1,\Qbar}$ by exactness of \Cref{eq:short-ex-seq}.
    Define the map:
      \[
        \tilde \varphi :
        \left \lbrace
        \begin{matrix}
          \pi_{1,K} & \to & G \\
          x & \mapsto & \varphi(x s(w(x))^{-1})
        \end{matrix}
        \right .
        .
      \]
    If $x\in\pi_{1,\Qbar}$, then $w(x)=1$ and thus $\tilde\varphi(x)=\varphi(x)$.
    If $\sigma \in \Gamma_K$, then $s(w(s(\sigma))) = s(\sigma)$ so $\tilde\varphi(s(\sigma))=\varphi(s(\sigma) s(\sigma)^{-1})=\varphi(1) = 1$.
    So $\varphi = \tilde \varphi \circ \iota$ and $\tilde \varphi \circ s = 1$.
    It remains to check that $\tilde \varphi$ is a morphism.
    Let $x,y \in \pi_{1,K}$ and compute:
    \begin{align*}
      \tilde\varphi(x) \tilde\varphi(y)
      & =
      \varphi(xs(w(x))^{-1})
      \varphi(ys(w(y))^{-1})
      & \text{by definition of } \tilde\varphi
      \\ & =
      \varphi(xs(w(x))^{-1})
      (w(x).\varphi)(ys(w(y))^{-1}) &
      \text{because $\varphi$ is defined over $K$}
      \\ & =
      \varphi(xs(w(x))^{-1})\varphi \left (s(w(x))ys(w(y))^{-1}s(w(x))^{-1} \right ) &
      \text{by definition of the $\Gamma_K$-action}
      \\ & =
      \varphi(xy s(w(xy))^{-1}) =
      \tilde\varphi(xy)
      & \text{by definition of } \tilde\varphi.
    \end{align*}
    This concludes the proof.
  \end{itemize}
\end{proof}

\subsubsection{The Galois action on components and fields of definition.}
The Galois action on marked $G$-covers induces a well-defined $\Gamma_K$-action on the graded set $\Comp(G)$.
Specifically, if $m \in \Comp(G)$, an automorphism $\sigma \in \Gamma_K$ maps marked $G$-covers in the component $m$ to marked $G$-covers in the component $\sigma.m$.
We do not claim that the Galois action is compatible with the monoid structure of $\Comp(G)$: this is precisely the difficulty of \Cref{qn:prod-defined}.

\begin{definition}
  \label{def:fielddefcomp}
  A component $m \in \Comp(G)$ is \emph{defined over $K$} if for all $\sigma \in \Gamma_K$ we have $\sigma.m=m$.
\end{definition}

\subsubsection{Comparison between the marked and the unmarked cases.}
\label{subsubsn:marked-unmarked}

Let $m \in \Comp(G)$ be a component of $\Hn^*(G,n)_{\Qbar}$, and $\tilde m$ be the component of $\Hn(G,n)_{\Qbar}$ obtained by forgetting the marked points.
The component $m$ is defined over $K$ when $\sigma.m=m$ for all $\sigma \in \Gamma_K$.
A weaker property, in general, is that $\tilde m$ is defined over $K$, meaning that for all $\sigma \in \Gamma_K$ there is a $\gamma \in G$ such that $\sigma.m = m^{\gamma}$.
There are two situations to consider:
\begin{itemize}
  \item
    If $\gen{m}=G$, then, by \Cref{propitem:braid-conjugate}, we have $m^{\gamma} = m$ for all $\gamma \in G$.
    In this instance, there is no difference between $m$ and $\tilde m$ being defined over $K$.
  \item
    If $\gen{m}$ is a proper subgroup $H$ of $G$, we introduce the component $m_H$ of $\Hn^*(H,n)_{\Qbar}$ obtained by removing all connected components except for the component of the marked points from the covers in $m$, making them connected $H$-covers.
    We also introduce the component $\tilde m_H$ of $\Hn(H,n)_{\Qbar}$, obtained by forgetting the marked points of the connected $H$-covers in $m_H$.

    The fields of definition of $m_H$ and $\tilde{m}_H$ are the same, according to the previous point.
    Since the ambient group is not relevant in the definition of $\sigma.m$, it is also clear that $m$ is defined over $K$ if and only if $m_H$ is defined over $K$.

    The comparison between the situations is summarized in the figure below:
    \begin{figure}[H]
      \begin{center}
      \includegraphics[scale=0.6]{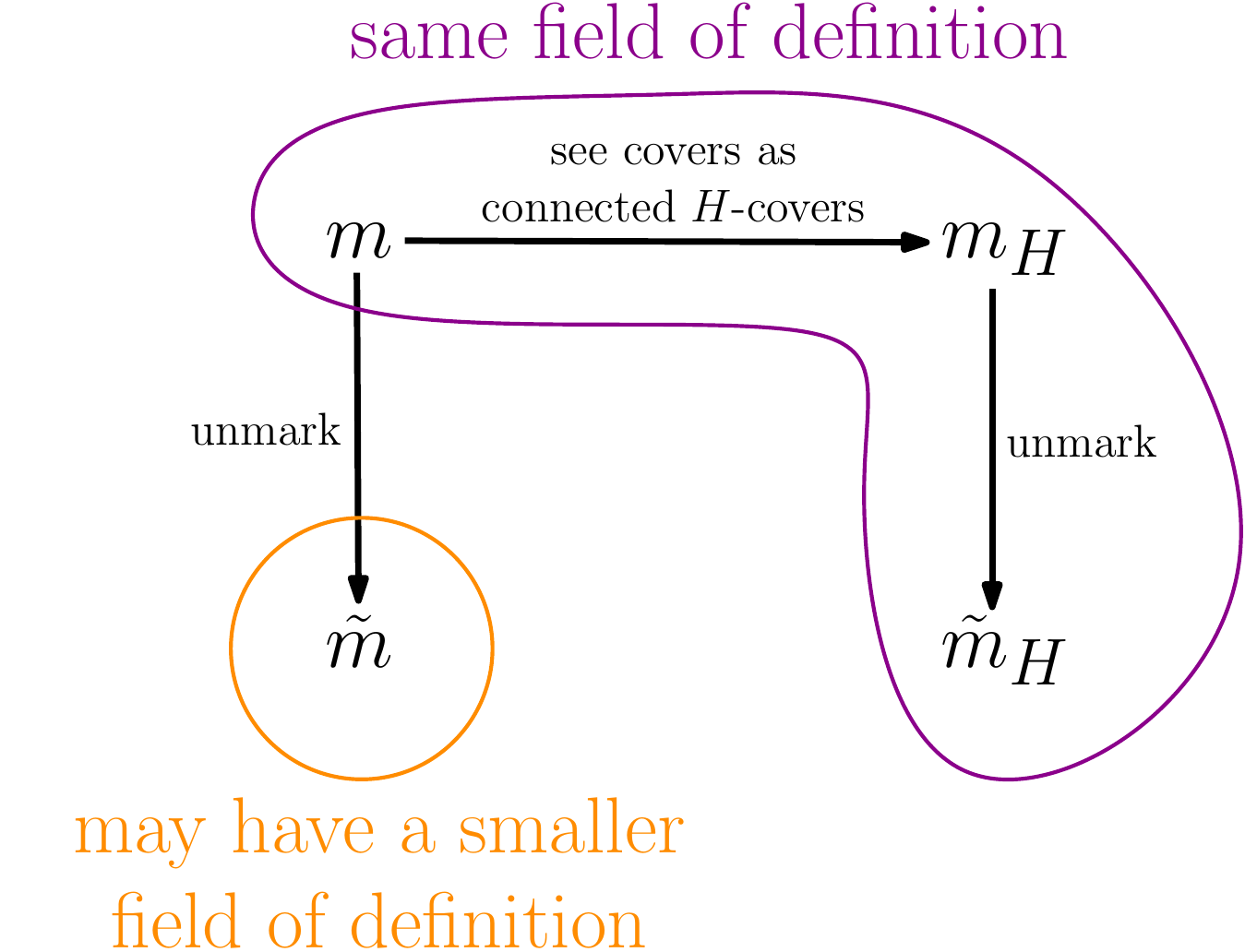}
      \caption{A summary of the situation}
      \end{center}
    \end{figure}

    This discussion implies that considering the fields of definitions of components of marked $G$-covers with monodromy group $H$ is equivalent to studying the fields of definitions of components of connected unmarked $H$-covers, which is the choice made in \autocite{Cau}.
    However, we opt for the former approach because it allows for a unified treatment of these components and leads to a simpler algebraic structure.

    There are still ways to relate the fields of definition of $\tilde{m}$ and $m$. For instance, the following lemma can be applied:
    \begin{lemma}
      If $\tilde m$ is defined over $K$ and $H$ is either self-normalizing in $G$ or has no outer automorphisms, then $m$ is defined over $K$.
    \end{lemma}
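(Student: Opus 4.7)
The plan is to translate the hypothesis that $\tilde m$ is defined over $K$ into the statement that for every $\sigma \in \Gamma_K$ there exists $\gamma = \gamma(\sigma) \in G$ with $\sigma.m = m^{\gamma}$. This is exactly what the hypothesis gives: the $\Gamma_K$-action on components of unmarked covers is the $\Gamma_K$-action on marked components modulo $G$-conjugation, so $\tilde m$ being $\Gamma_K$-stable means precisely that each $\sigma.m$ lies in the $G$-orbit of $m$ under conjugation. The goal is then to upgrade this to $\sigma.m = m$, which by \Cref{propitem:braid-conjugate} (applied with $\gbar_1,\gbar_3$ the empty tuple) will follow as soon as one exhibits such a $\gamma$ lying in $\gen{m} = H$.

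The first step is to observe that the $\Gamma_K$-action does not alter the monodromy group. Viewing $m$ as a morphism $\varphi : \pi_{1,\Qbar} \to G$, the action is given by $(\sigma.\varphi)(x) = \varphi(x^{s(\sigma)})$, so $\Ima(\sigma.\varphi) = \Ima(\varphi)$. Consequently $\gen{\sigma.m} = H$, while $\gen{m^{\gamma}} = H^{\gamma}$. The relation $\sigma.m = m^{\gamma}$ therefore forces $H^{\gamma} = H$, i.e.\ $\gamma \in N_G(H)$. From here, each of the two alternative hypotheses closes the argument by a short manipulation: if $H$ is self-normalizing in $G$, then $\gamma \in H = \gen{m}$, and \Cref{propitem:braid-conjugate} immediately gives $m^{\gamma} = m$.

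If instead $H$ has no outer automorphisms, I would note that conjugation by $\gamma \in N_G(H)$ defines an automorphism of $H$, which must by hypothesis agree on $H$ with conjugation by some $h \in H$. Applied entry-wise to a representing tuple $\gbar = (g_1,\ldots,g_n)$ of $m$ (all of whose entries lie in $H$), this gives $g_i^{\gamma} = g_i^{h}$ for every $i$, hence $\gbar^{\gamma} = \gbar^{h}$, and thus $m^{\gamma} = m^{h}$. Since $h \in H = \gen{m}$, \Cref{propitem:braid-conjugate} once more yields $m^{h} = m$. In both cases $\sigma.m = m$ for all $\sigma \in \Gamma_K$, which is the conclusion. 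The only mildly nontrivial ingredient is the invariance of the monodromy subgroup under $\Gamma_K$, which is what makes the hypotheses on $H$ actually usable; after that the argument is purely formal, and I do not anticipate any real obstacle.
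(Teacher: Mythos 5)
Your proof is correct and follows essentially the same route as the paper: deduce from $\sigma.m = m^{\gamma}$ and the Galois-invariance of the monodromy group that $\gamma$ normalizes $H$, then in the self-normalizing case conclude $\gamma \in H$, and in the no-outer-automorphisms case replace $\gamma$ by an element of $H$ inducing the same conjugation, finishing with \Cref{propitem:braid-conjugate} in both cases. The only difference is that you spell out the step $\gen{\sigma.m}=H$ explicitly, which the paper leaves implicit.
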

    \begin{proof}
      Consider some $\sigma \in \Gamma_K$.
      The equality $\sigma.m = m^{\gamma}$ implies $H=H^{\gamma}$, i.e. conjugation by $\gamma$ defines an automorphism of $H$.
      \begin{itemize}
        \item
          If $H$ is self-normalizing, this implies $\gamma \in H$.
        \item
          If $H$ has no outer automorphisms, conjugation by $\gamma$ has to be an inner automorphism.
          So there is a $\gamma' \in H$ such that $h^{\gamma} = h^{\gamma'}$ for all $h \in H$, and in particular $m^{\gamma} = m^{\gamma'}$.
      \end{itemize}
      In both cases, \Cref{propitem:braid-conjugate} shows that $\sigma.m = m$.
      Therefore $m$ is defined over $K$.
    \end{proof}
\end{itemize}

\subsection{The branch cycle lemma}
\label{subsn:bcl}

The action of the Galois group $\Gamma_K$ on multidiscriminants of $G$-covers or their components is precisely known.
If $\sigma \in \Gamma_K$ is an automorphism, we denote by $\sigma$ too the permutation of $\{1,\ldots,n\}$ such that $\sigma.t_i = t_{\sigma(i)}$.
Consider a marked $G$-cover branched at $\tbar$, seen as a morphism $\varphi : \pi_{1,\Qbar} \rightarrow G$.
Let $(\gamma_1,\ldots,\gamma_n) \in \pi_{1,\Qbar}^n$ be a bouquet associated to $\tbar$.
The following result is classical, cf. \autocite{Fried77} (or \autocite[Lemme 2.2]{Cau} for a statement closer to ours):

\begin{lemma}
  \label{lem:branch-cycle-lemma}
  For every $\sigma \in \Gamma_K$, the element $(\sigma.\varphi)(\gamma_i)$ is conjugate to $\left ( \varphi(\gamma_{\sigma^{-1}(i)}) \right )^{\chi(\sigma)^{-1}}$.
\end{lemma}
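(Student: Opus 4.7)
The plan is to reduce the question to an identity in the geometric fundamental group $\pi_{1,\Qbar}$ and then apply the standard description of the $\Gamma_K$-action on tame inertia. First, by the definition of the Galois action on marked $G$-covers given just above the lemma, we have
\[
  (\sigma.\varphi)(\gamma_i)
  = \varphi\!\left( \gamma_i^{s(\sigma)} \right)
  = \varphi\!\left( s(\sigma)\, \gamma_i \, s(\sigma)^{-1} \right),
\]
and by normality of $\pi_{1,\Qbar}$ in $\pi_{1,K}$ the element $s(\sigma)\gamma_i s(\sigma)^{-1}$ lies in $\pi_{1,\Qbar}$. Since $\varphi$ sends $\pi_{1,\Qbar}$-conjugate elements to $G$-conjugate ones and commutes with taking profinite powers, it suffices to identify $s(\sigma)\gamma_i s(\sigma)^{-1}$ inside $\pi_{1,\Qbar}$, up to $\pi_{1,\Qbar}$-conjugation.

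Second, I exploit the inertia structure. The loop $\gamma_i$ topologically generates a tame inertia subgroup $I_i \subseteq \pi_{1,\Qbar}$ at a chosen geometric place above $t_i$. Because $s(\sigma)$ lifts $\sigma \in \Gamma_K$ and $\sigma$ permutes the branch points by $t_j \mapsto t_{\sigma(j)}$, conjugation by $s(\sigma)$ maps the set of inertia subgroups above $t_i$ onto the set of inertia subgroups above $t_{\sigma^{-1}(i)}$ (the inverse direction coming from the contravariance of the section-induced action on loops). Any two inertia subgroups above a single branch point are $\pi_{1,\Qbar}$-conjugate, so $s(\sigma)\gamma_i s(\sigma)^{-1}$ is $\pi_{1,\Qbar}$-conjugate to a generator of $I_{\sigma^{-1}(i)} = \overline{\langle \gamma_{\sigma^{-1}(i)} \rangle}$, and hence to $\gamma_{\sigma^{-1}(i)}^{\alpha}$ for some $\alpha \in \hat\Z^{\times}$.

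Third, the exponent $\alpha$ is pinned down by the action on roots of unity. The tame inertia $I_j$ is canonically isomorphic, as a $\Gamma_K$-module, to the Tate module $\hat\Z(1) = \varprojlim_n \mu_n(\Qbar)$, via its action on compatible systems of $n$-th roots of a local uniformizer at $t_j$; under this identification $\gamma_j$ corresponds to the canonical generator associated with the compatible system $(\zeta_n)_n$ used to orient the loop counterclockwise. The action of $\Gamma_K$ on $\hat\Z(1)$ is by the cyclotomic character $\chi$. Comparing the two descriptions of the $s(\sigma)$-action on the generator forces $\alpha = \chi(\sigma)^{-1}$; applying $\varphi$ then yields the conjugacy stated in the lemma.

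The main obstacle is purely one of bookkeeping: the choice of direction for the index permutation (either $\sigma$ or $\sigma^{-1}$) and the sign of the cyclotomic exponent are both sensitive to the conventions fixed for the section $s$, the orientation of the generators $\gamma_i$, and whether the identification of tame inertia with the Tate module is done covariantly or contravariantly. Cross-checking against the classical formulations in \autocite{Fried77} and \autocite{Cau} then fixes the exact formula stated in the lemma.
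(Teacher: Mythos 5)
The paper does not prove this lemma: it cites it as classical, referring to \autocite{Fried77} and \autocite[Lemme 2.2]{Cau} for the statement and proof. So there is no "paper's own proof" to compare against; your proposal is a genuine reconstruction, and it follows what is indeed the standard architecture — unpack $(\sigma.\varphi)(\gamma_i) = \varphi(s(\sigma)\gamma_i s(\sigma)^{-1})$, note that $s(\sigma)$-conjugation permutes the tame inertia subgroups according to the permutation of branch points, and pin down the cyclotomic twist via the identification of each tame inertia with $\hat\Z(1)$.

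Two points warrant attention. First, the phrase "$I_j$ is canonically isomorphic, as a $\Gamma_K$-module, to $\hat\Z(1)$" is a little loose: $\Gamma_K$ acting via $s(\sigma)$-conjugation does not stabilize the subgroup $I_j$ of $\pi_{1,\Qbar}$ — it sends it to the inertia at a different branch point (unless $\sigma$ fixes $t_j$). The precise assertion needed is that the conjugation map $I_j \to I_{j'}$ induced by $s(\sigma)$ intertwines the two identifications with $\hat\Z(1)$ up to a twist by $\chi(\sigma)^{\pm 1}$. Second, and more substantively, you explicitly defer the exact index direction ($\sigma$ versus $\sigma^{-1}$) and the sign of the cyclotomic exponent to a "cross-check against the classical formulations." That is precisely where the content of the formula lies: your argument as written only establishes that $(\sigma.\varphi)(\gamma_i)$ is conjugate to some $\hat\Z^{\times}$-power of some $\varphi(\gamma_j)$ with $j$ in the $\sigma$-orbit of $i$. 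A self-contained proof must actually trace those conventions through — in particular, one should check carefully which way the action of $s(\sigma)$ on $\Qbar(T)$ (coefficientwise via $\sigma$) moves the place $(T - t_i)$, and hence which inertia group $s(\sigma) I_i s(\sigma)^{-1}$ lands in. Since the paper itself leaves the proof to the literature, this deferral is understandable, but it does leave a gap if the goal is an independent verification of the exact formula rather than merely a plausibility argument.
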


We restate \Cref{lem:branch-cycle-lemma} in terms of multidiscriminants of components.
Let $\gbar = (\varphi(\gamma_1),\ldots,\varphi(\gamma_n))$ be the tuple associated to $\varphi$, and $\sigma.\gbar = (\varphi(\sigma.\gamma_1),\ldots,\varphi(\sigma.\gamma_n))$ be the tuple associated to $\sigma.\varphi$ for all $\sigma \in \Gamma_K$.
Let $H$ be a group containing $\gen{\gbar}$ and $c$ be a $K$-rational (cf. \Cref{defn:rational-subset}) conjugation-invariant subset of $H$ containing $c_H(\gbar)$.
Denote by $D$ the set of conjugacy classes of $H$ contained in $c$ and by $p_{\sigma}$ the map $D \to D$ induced by the $\chi(\sigma)$-th power operation for $\sigma \in \Gamma_K$.
Let $x \in \Comp(G)$ be the component represented by the tuple $\gbar$.
Recall from \Cref{subsn:notation} that the $(H,c)$-multidiscriminant $\mu_{H,c}(x)$ of $x$ is the map that counts the occurrences in $\gbar$ of each conjugacy class in $D$.

\begin{definition}
  \label{defn:rational-multidisc}
  We say that $x$ \emph{has a $K$-rational $(H,c)$-multidiscriminant} if for all $\sigma \in \Gamma_K$ we have:
    \[ \mu_{H,c}(x) = \mu_{H,c}(x) \circ p_{\sigma}, \]
  i.e. every conjugacy class $\gamma \in D$ appears as many times in $\gbar$ as the classes $\gamma^{\chi(\sigma)}$ for $\sigma \in \Gamma_K$.
\end{definition}

Note that a product of components with $K$-rational multidiscriminants has a $K$-rational multidiscriminant.
\Cref{lem:branch-cycle-lemma} has the following corollary:

\begin{corollary}
  \label{cor:bcl}
  ~
  \begin{corenum}
    \item
    \label{coritem:galact-multidisc}
      For all $\sigma \in \Gamma_K$, the $(H,c)$-multidiscriminants of $x$ and $\sigma.x$ are related by the equality:
        \[ \mu_{H,c}(\sigma.x) = \mu_{H,c}(x) \circ p_{\sigma}. \]
    \item
    \label{coritem:bcl-rational-multidisc}
      If $x$ is defined over $K$, then $x$ has a $K$-rational $(H,c)$-multidiscriminant.
    \item
    \label{coritem:bcl-abelian}
      If $x$ has a $K$-rational $(H,c)$-multidiscriminant and $H$ is abelian, then $x$ is defined over $K$.
  \end{corenum}
\end{corollary}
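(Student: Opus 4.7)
The plan is to derive all three parts from \Cref{lem:branch-cycle-lemma}. First I would fix a bouquet $(\gamma_1,\ldots,\gamma_n)$ adapted to $\tbar$, take $\gbar = (\varphi(\gamma_1),\ldots,\varphi(\gamma_n))$ as a representative of $x$, and let $\sigma.\gbar = ((\sigma.\varphi)(\gamma_1),\ldots,(\sigma.\varphi)(\gamma_n))$ be a representative of $\sigma.x$. The $K$-rationality of $c$ ensures that the power map $p_\sigma$ is well-defined on $D$.

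For \Cref{coritem:galact-multidisc}, I would invoke \Cref{lem:branch-cycle-lemma} to write $(\sigma.\varphi)(\gamma_i)$ as a conjugate of $\varphi(\gamma_{\sigma^{-1}(i)})^{\chi(\sigma)^{-1}}$, which lies in the same conjugacy class of $H$. Then for $\gamma \in D$, the count $\mu_{H,c}(\sigma.x)(\gamma) = \card{\{i : (\sigma.\varphi)(\gamma_i) \in \gamma\}}$ equals, after the change of variable $j = \sigma^{-1}(i)$, the count $\card{\{j : \varphi(\gamma_j) \in \gamma^{\chi(\sigma)}\}} = \mu_{H,c}(x)(p_\sigma(\gamma))$, proving the identity.

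\Cref{coritem:bcl-rational-multidisc} is an immediate consequence: if $x$ is defined over $K$ then $\sigma.x = x$ for all $\sigma \in \Gamma_K$, so $\mu_{H,c}(x) = \mu_{H,c}(\sigma.x) = \mu_{H,c}(x) \circ p_\sigma$ by (i). For \Cref{coritem:bcl-abelian}, the essential observation is that when $H$ is abelian, the braid action on product-one tuples with all entries in $H$ reduces to the pure permutation action of $\Sym_n$ (every conjugation $g_{i+1}^{g_i}$ being trivial); consequently, the braid orbit of such a tuple is determined by the multiset of its entries, and hence by the $(H,c)$-multidiscriminant. Combining the $K$-rationality hypothesis with (i) yields $\mu_{H,c}(\sigma.x) = \mu_{H,c}(x) \circ p_\sigma = \mu_{H,c}(x)$, so $\sigma.x$ and $x$ represent the same braid orbit, which is exactly $\sigma.x = x$.

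The main subtlety, which is minor, is bookkeeping: I need \Cref{lem:branch-cycle-lemma} to furnish conjugations within $H$ (not merely within $G$) so that the two sides of the identity in (i) are honest $(H,c)$-multidiscriminants computed relative to the same ambient group. This follows from the $K$-rationality of $c$ together with the fact that the classes of $\sigma.x$ appearing in $c$ must coincide with $p_\sigma$-translates of classes of $x$. Once this is set up, everything reduces to the straightforward index-counting argument described above.
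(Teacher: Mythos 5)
Your proof is correct and follows essentially the same route as the paper's: part (i) by applying the branch cycle lemma and re-indexing, part (ii) as an immediate corollary, and part (iii) by observing that when $H$ is abelian the braid action degenerates to a permutation action so that components are classified by multidiscriminants. The only minor difference is that you spell out the index shift $j = \sigma^{-1}(i)$ and the bookkeeping about conjugations lying in $H$, which the paper leaves implicit, but the substance is identical.
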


Point (ii) furnishes an easily checked necessary condition for a component to be defined over $K$.
Point (iii) says that the implication of point (ii) is an equivalence in the abelian case: in this situation, fields of definition of components are well-understood.

\begin{proof}
  \begin{enumerate}[label=(\roman*)]
    \item
      Let $\gamma \in D$ be a conjugacy class of $H$.
      Then $\mu_{H,c}(\sigma.x)(\gamma)$ is the number of occurrences of $\gamma$ in $\sigma.\gbar$.
      \Cref{lem:branch-cycle-lemma} implies that this is also the number of occurrences of $\gamma^{\chi(\sigma)}$ in $\gbar$, which is precisely $\mu_{H,c}(x)(\gamma^{\chi(\sigma)})$.
    \item
      Since $x$ is defined over $K$, we have $\sigma.x = x$ for all automorphisms $\sigma \in \Gamma_K$.
      By (i), this implies $\mu_{H,c}(x) = \mu_{H,c}(x) \circ p_{\sigma}$, i.e. $x$ has a $K$-rational $(H,c)$-multidiscriminant.
    \item
      Since $H$ is abelian, conjugacy classes of $H$ and elements of $H$ are ``the same'', and components are just unordered product-one tuples of elements of $H$ (the braid group acts by permutation).
      Hence, two components are equal exactly when their $(H,c)$-multidiscriminants are equal.

      Since $x$ has a $K$-rational $(H,c)$-multidiscriminant, we know that for every $\sigma \in \Gamma_K$ we have $\mu_{H,c}(x) = \mu_{H,c}(x) \circ p_{\sigma}$, which is equal to $\mu_{H,c}(\sigma.x)$ by point (i).
      In other words, $x$ and $\sigma.x$ are components with equal $(H,c)$-multidiscriminants, and therefore they are equal.
  \end{enumerate}
\end{proof}

\begin{example}
  \label{ex:bcl-abelian}
  Assume that the group $G$ is abelian.
  \Cref{coritem:bcl-abelian} lets one determine the field of definition of components.
  A consequence is that the answer to \Cref{qn:prod-defined} is always ``yes'' in this case.
  For example, a component represented by a tuple $\gbar \in G^n$ is defined over $\Q$ exactly when every element $g \in G$ appears as many times in $\gbar$ as the elements $g^k$ for $k$ relatively prime with $\ord(g)$.
  Let us give concrete examples:
  \begin{itemize}
    \item
      The component $(1,1,1) \in \Comp(\Z/3\Z)$ is not defined over $\Q$, because $1$ does not appear as many times as $-1$.
    \item
      The component $(1,-1) \in \Comp(\Z/n\Z)$ is defined over $\Q$ for $n \in \{2,3,4,6\}$, and not defined over $\Q$ for $n=5$ or $n\geq 7$.
  \end{itemize}
\end{example}

\begin{example}
  \label{ex:symgp}
  It follows from \autocite[Theorem 10.6]{geomringcomp} that components of $\Sym_d$-covers whose monodromy elements are transpositions are entirely determined by their monodromy group $H$ and their $H$-multidiscriminant.
  Since transpositions are involutions, all conjugacy classes involved are $\Q$-rational, therefore \Cref{lem:branch-cycle-lemma} implies that the action of $\Gal(\Qbar\mid\Q)$ preserves multidiscriminants.
  Since components of $\Sym_d$-covers whose monodromy elements are transpositions are determined by their multidiscriminants, which are $\Q$-rational, these components are all defined over $\Q$.
\end{example}

\section{The group-theoretic approach}
  \label{sn:cau-thm}

In this section, we propose new applications of ideas introduced in \autocite{Cau}, which we recall in \Cref{subsn:cauthm}.
In \Cref{subsn:permuting}, we prove the main result \Cref{thmitem:prod-permuting-defK}, which corresponds to \Cref{thmitem:main-prod-permuting-defK}.
This theorem is generalized in \Cref{subsn:generalized-permuting} and examples are given in \Cref{subsn:applications-permuting}.
In \Cref{susbn:galois-small}, we use similar methods to reduce the Galois action to components of small degree, cf. \Cref{prop:reduction-galact,prop:small-deg-all-defK}.
Our approach is based on braid manipulations and group-theoretic criteria.

\subsection{Cau's theorem}
\label{subsn:cauthm}

Following \autocite{Cau}, if $x_1,\ldots,x_n \in \Comp(G)$ and $H$ is a subgroup of $G$ which contains $\gen{x_1,\ldots,x_n}$, we define the following subset of $\Comp(G)$:
  \[ \ni_H(x_1,\ldots,x_n) =
    \left \lbrace
      \prod_{i=1}^n x_i^{\gamma_i}
      \verti
      \gamma_i \in H
    \right \rbrace.
  \]

We also introduce the following subset, which always contains $x_1 \cdots x_n$:
\[ \nid_H(x_1,\ldots,x_n) =
  \left \lbrace
    \prod_{i=1}^n x_i^{\gamma_i}
    \verti
    \begin{matrix}
      \gamma_i \in H \\
      \gen{x_1^{\gamma_1}\cdots x_n^{\gamma_n}} = \gen{x_1 \cdots x_n}
    \end{matrix}
  \right \rbrace
  .
\]

When $H$ is not specified, it is assumed that $H = \gen{x_1 \cdots x_n}$.

In Cau's terminology, a family of elements of $\Comp(G)$ corresponds to a \emph{degenerescence structure} $\Delta$, and elements of $\ni_H(\Delta)$ are called \emph{$\Delta$-components}.
Cau gave a criterion to identify whether a given component is a $\Delta$-component depending on the existence of a specific ``$\Delta$-admissible cover'' on its boundary.
This characterization is key for his proof of the following theorem, which is \autocite[Théorème 3.2]{Cau}:
\begin{theorem}
  \label{thm:cau}
  Let $x_1,\ldots,x_n$ be components, $H$ a subgroup of $G$ which contains $\gen{x_1,\ldots,x_n}$, and $\sigma \in \Gamma_K$.
  Then the action of $\sigma$ on components induces a bijection:
    \[ \ni_H(x_1,\ldots,x_n) \to \ni_H(\sigma.x_1,\ldots,\sigma.x_n) \]
  and the same statement holds if $\ni_H$ is replaced by $\nid_H$.
\end{theorem}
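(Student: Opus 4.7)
The plan is to work with a geometric characterization of $\ni_H(x_1,\ldots,x_n)$ via degenerations of branch-point configurations, since such a characterization is manifestly $\Gamma_K$-equivariant, whereas the product on $\Comp(G)$ is not known to be so (that is precisely \Cref{qn:prod-defined}). Before starting, I would record the following basic commutation: for every $x \in \Comp(G)$, $\gamma \in G$ and $\sigma \in \Gamma_K$, one has $\sigma.(x^{\gamma}) = (\sigma.x)^{\gamma}$. This is immediate from the definitions of \Cref{subsn:galact}: conjugation by $\gamma$ post-composes a morphism $\varphi \colon \pi_{1,\Qbar} \to G$ with an inner automorphism of $G$, whereas $\sigma$ pre-composes $\varphi$ with conjugation by $s(\sigma) \in \pi_{1,K}$, so the two operations commute.

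The heart of the proof is the geometric characterization. Set $N = \sum_i \deg(x_i)$ and fix a suitable compactification of $\Hn^*(G,N)$ defined over $\Q$ (e.g.\ a Harbater--Mumford-type compactification). The compactified configuration space contains a boundary stratum, defined over $\Q$, parametrizing configurations in which the $N$ points collapse into $n$ clusters of sizes $\deg(x_1),\ldots,\deg(x_n)$; above it sits a ``boundary Hurwitz space'' whose fibre at a clustered configuration classifies nodal marked $G$-covers assembled by gluing local marked $G$-covers, one per cluster. A component $z$ of $\Hn^*(G,N)$ lies in $\ni_H(x_1,\ldots,x_n)$ exactly when its closure meets this stratum at a point whose $i$-th local cover lies in $x_i$ and whose gluing datum lies in $H^n$, the components of the datum being the conjugating elements $\gamma_i$. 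This is essentially Cau's notion of a $\Delta$-admissible cover.

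Granted this characterization, every ingredient is $\Gamma_K$-equivariant: the clustered stratum is defined over $\Q$, the fibrewise data are acted on cluster-by-cluster by $\sigma$ (sending a cover in $x_i$ to one in $\sigma.x_i$, via the Galois action on components from \Cref{subsn:galact}), and the gluing datum in $H^n$ is a purely combinatorial object unaffected by $\sigma$. Hence $\sigma$ sends $\ni_H(x_1,\ldots,x_n)$ into $\ni_H(\sigma.x_1,\ldots,\sigma.x_n)$, and the same argument applied to $\sigma^{-1}$ yields the reverse inclusion, giving the bijection. For the $\nid_H$ refinement I would additionally use that $\gen{\sigma.x} = \gen{x}$ for every $x \in \Comp(G)$, since the monodromy group is an intrinsic invariant of the underlying cover; the constraint $\gen{\prod x_i^{\gamma_i}} = \gen{\prod x_i}$ therefore translates into the analogous constraint for the $\sigma.x_i$.

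The main obstacle is making the degeneration picture rigorous: one must exhibit a compactification of $\Hn^*(G,N)$ whose boundary behaves as claimed and verify that smoothing a nodal cover realises exactly the products $\prod x_i^{\gamma_i}$, with the $\gamma_i$ matching the local gluing data. I would import this technical content from Cau's theory of $\Delta$-admissible covers rather than redevelop it from scratch.
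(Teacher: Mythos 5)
Your proposal matches the paper's approach: the paper does not reprove this statement but cites it directly as Th\'eor\`eme 3.2 of Cau, whose proof relies on precisely the $\Delta$-admissible cover and boundary-stratum machinery you sketch, and both you and the paper reduce the $\nid_H$ variant to the observation that the Galois action preserves monodromy groups. One small imprecision: your claim that the gluing datum is ``unaffected by $\sigma$'' is stronger than needed (and likely not literally true), but since the definition of $\ni_H$ only asks for the existence of \emph{some} gluing datum in $H^n$, this does not affect the argument.
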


That \Cref{thm:cau} holds if $\ni_H$ is replaced with $\nid_H$ follows from the fact that the Galois action preserves the monodromy group.
If $X$ is a finite set of components and $\sigma \in \Gamma_K$, we write \Cref{thm:cau} under the form $\sigma.\ni(X)=\ni(\sigma.X)$, where $\sigma.X$ is a shorthand for $\{\sigma.x \,\mid\, x \in X\}$.

\subsection{Permuting components}
\label{subsn:permuting}

In \autocite[Proposition 2.10]{Cau} and \autocite[Théorème 3.8]{Cau2}, Cau applies \Cref{thm:cau} in situations where he shows $\ni(x_1,\ldots,x_n) = \{x_1 \cdots x_n\}$.
We introduce a different condition that will later be shown to imply $\nid(x_1,\ldots,x_n) = \{ x_1 \cdots x_n \}$:

\begin{definition}
  \label{defn:permuting}
  Two components $x,y \in \Comp(G)$ of respective monodromy groups $H_1 = \gen{x}$ and $H_2 = \gen{y}$ are \emph{permuting} if $H_1 H_2 = \gen{H_1, H_2}$.
\end{definition}

This terminology comes from the fact that subgroups $H_1, H_2$ of a group $G$ are classically called \emph{permuting} when $H_1 H_2$ is a subgroup of $G$.
Two elements of $\Comp(G)$ are permuting exactly when their monodromy groups are permuting subgroups of $G$.
This condition is neither stronger nor weaker than the completeness conditions considered by Cau.
In \Cref{subsn:generalized-permuting}, we give a condition that generalizes both \Cref{defn:permuting} and the hypothesis of \autocite[Proposition 2.10]{Cau}.

Note that $x, y \in \Comp(G)$ are permuting whenever $\gen{x}$ or $\gen{y}$ is normal in $\gen{x,y}$, and in particular when one monodromy group contains the other.
Cases of interest are $\gen{x}=\gen{y}$ as well as $\gen{x}=G$ or $\gen{y}=G$.
Moreover, if $x,y$ are permuting and $\sigma \in \Gamma_K$, then $\sigma.x$ and $\sigma.y$ are permuting.

We are now ready to prove \Cref{thm:galois-permuting}, whose third point is \Cref{thmitem:main-prod-permuting-defK}:

\begin{theorem}
  \label{thm:galois-permuting}
  Let $x,y \in \Comp(G)$ be permuting components.
  Then:
  \begin{thmenum}
    \item
      \label{thmitem:permuting-nid-singleton}
      The set $\nid(x,y)$ contains only the component $xy$.
    \item
      \label{thmitem:sigma-prod-permuting}
      For all $\sigma\in\Gamma_K$ we have $\sigma.(xy)=(\sigma.x)(\sigma.y)$.
    \item
      \label{thmitem:prod-permuting-defK}
      If $x$ and $y$ are defined over $K$, then $xy$ is defined over $K$.
  \end{thmenum}
\end{theorem}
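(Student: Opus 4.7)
The plan is to establish (i) by a direct braid-manipulation argument, then upgrade it to the Galois-equivariance statement (ii) via Cau's \Cref{thm:cau}, and finally read off (iii) as an immediate corollary of (ii).

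For (i) I will prove the stronger statement $\ni_H(x,y) = \{xy\}$. Fix arbitrary $\gamma_1, \gamma_2 \in H$; the goal is $x^{\gamma_1} y^{\gamma_2} = xy$. The key observation is the special case of \Cref{propitem:braid-conjugate} with $\gbar_1, \gbar_3$ empty: for any $g \in \gen{xy} = H$, one has $xy = (xy)^g = x^g y^g$. Hence it suffices to find a single $g \in H$ realizing simultaneously $x^g = x^{\gamma_1}$ and $y^g = y^{\gamma_2}$. Applying the same remark to the component $x^{\gamma_1}$, whose monodromy group is $\gamma_1 H_1 \gamma_1^{-1}$, shows that $x^g = x^{\gamma_1}$ as soon as $g \gamma_1^{-1} \in \gamma_1 H_1 \gamma_1^{-1}$, i.e.\ as soon as $g \in \gamma_1 H_1$; symmetrically $y^g = y^{\gamma_2}$ whenever $g \in \gamma_2 H_2$. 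The intersection $\gamma_1 H_1 \cap \gamma_2 H_2$ is non-empty iff $\gamma_1^{-1}\gamma_2 \in H_1 H_2$, and the permuting hypothesis $H_1 H_2 = H$ delivers exactly this for any $\gamma_1, \gamma_2 \in H$; moreover any such $g$ automatically lies in $H$.

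For (ii), I will apply \Cref{thm:cau} in its $\nid_H$ version. A preliminary observation is that the Galois action preserves the monodromy group of a component: by \Cref{coritem:galact-multidisc}, the multidiscriminant of $\sigma.x$ is obtained from that of $x$ by composing with $p_\sigma$, and since $\chi(\sigma)$ is a unit modulo the order of each element involved, the cyclic subgroup generated by each tuple entry is unchanged. Hence $\gen{\sigma.x} = H_1$ and $\gen{\sigma.y} = H_2$, so $\sigma.x$ and $\sigma.y$ are again permuting with the same overgroup $H$. Applying (i) to both pairs gives $\nid_H(x,y) = \{xy\}$ and $\nid_H(\sigma.x, \sigma.y) = \{(\sigma.x)(\sigma.y)\}$, and Cau's theorem then forces $\sigma.(xy) = (\sigma.x)(\sigma.y)$.

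Part (iii) is immediate from (ii): if $x$ and $y$ are defined over $K$, then $\sigma.x = x$ and $\sigma.y = y$ for every $\sigma \in \Gamma_K$, whence $\sigma.(xy) = xy$. The only real technical step is the coset intersection argument in (i); once one identifies that the correct simultaneous-conjugation condition is $g \in \gamma_1 H_1 \cap \gamma_2 H_2$, the permuting hypothesis drops in at precisely the right place and the remainder of the proof is formal.
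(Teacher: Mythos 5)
Your proof is correct and reaches all three conclusions by the same overall strategy as the paper (braid manipulations for (i), \Cref{thm:cau} in its $\nid_H$ form for (ii), then (iii) immediately), but your argument for (i) takes a cleaner route. The paper first reduces to $\gamma = 1$ by conjugating the whole product by $\gamma^{-1}$ --- a step that requires the $\nid$-constraint $\gen{x^{\gamma}y^{\gamma'}} = H$ --- and then factors $\gamma' = \gamma_1\gamma_2$ with $\gamma_i \in H_i$. You instead locate a single $g$ in the coset intersection $\gamma_1 H_1 \cap \gamma_2 H_2 \subseteq H$, nonempty precisely because $H_1 H_2 = H$, and conjugate both factors simultaneously via $x^{\gamma_1}y^{\gamma_2} = x^g y^g = (xy)^g = xy$. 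This is more symmetric, and because you never invoke $\gen{x^{\gamma_1}y^{\gamma_2}} = H$ you actually prove the stronger equality $\ni_H(x,y) = \{xy\}$ rather than only $\nid_H(x,y) = \{xy\}$ --- a mild but real improvement, showing the distinction between $\ni$ and $\nid$ collapses for permuting pairs.

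One small imprecision to fix in (ii): you try to extract $\gen{\sigma.x} = H_1$ from \Cref{coritem:galact-multidisc}, but that corollary only controls conjugacy classes. The branch cycle lemma says the entries of $\sigma.\gbar$ are \emph{conjugate} to powers of entries of $\gbar$, and the conjugating elements could a priori lie outside $H_1$, so knowing the multidiscriminant is transformed by $p_\sigma$ does not by itself pin down the generated subgroup. The fact is true and is stated explicitly in \Cref{subsn:galact}: the action $\sigma.\varphi$ is a precomposition of $\varphi$ with an automorphism of $\pi_{1,\Qbar}$, so its image is unchanged. Cite that directly rather than re-deriving it from the multidiscriminant.
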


\begin{proof}
  \begin{enumerate}[label=(\roman*)]
    \item
      Let $\gamma, \gamma' \in H$ such that $\gen{x^{\gamma}y^{\gamma'}}=H$.
      We have to show $x^{\gamma} y^{\gamma'} = xy$.
      We use \Cref{propitem:braid-conjugate} to reduce to the case $\gamma = 1$: indeed, we have $x^{\gamma}y^{\gamma'} = xy^{\gamma^{-1}\gamma'}$ since $\gamma^{-1} \in \gen{x^{\gamma} y^{\gamma'}} = H$.
      Write $\gamma' = \gamma_1 \gamma_2$ with $\gamma_i \in H_i$.
      We have:
        \begin{align*}
          xy & = xy^{\gamma_2} & \text{by \Cref{propitem:braid-conjugate}, because } \gamma_2 \in \gen{y} \\
          & =  x(y^{\gamma_2})^{\gamma_1} & \text{by \Cref{propitem:braid-conjugate}, because } \gamma_1 \in \gen{x} \\
          & =  xy^{\gamma_1\gamma_2} \\
          & = xy^{\gamma'}.
        \end{align*}
    \item
      Let $\sigma \in \Gamma_K$.
      By \Cref{thm:cau}, the component $\sigma.(xy)$ belongs to the set $\nid(\sigma.x, \sigma.y)$ and thus it is equal to $(\sigma.x)(\sigma.y)$, by point (i) applied to the permuting components $\sigma.x$ and $\sigma.y$.
    \item
      Follows from point (ii).
  \end{enumerate}
\end{proof}

A noteworthy corollary of \Cref{thmitem:prod-permuting-defK} is the following:

\begin{corollary}
  \label{cor:power-defK}
  If $x$ is a component defined over $K$, then so is $x^n$ for all $n \geq 0$.
\end{corollary}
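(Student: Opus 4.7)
The plan is to derive this corollary as a direct consequence of \Cref{thmitem:prod-permuting-defK} by induction on $n$, after observing that any component $x$ is permuting with itself (and with any of its powers) in the sense of \Cref{defn:permuting}.

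First I would handle the trivial cases. For $n = 0$, the component $x^0$ is the identity of the monoid $\Comp(G)$, i.e.\ the braid orbit of the empty tuple; it is clearly fixed by every element of $\Gamma_K$ and therefore defined over $K$. For $n = 1$, the statement is the hypothesis on $x$.

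For the inductive step, assume $x^n$ is defined over $K$ and write $x^{n+1} = x \cdot x^n$. The key point is that $x$ and $x^n$ are permuting: if $\gbar$ is a product-one tuple representing $x$, then the concatenation $\gbar\cdots\gbar$ ($n$ times) represents $x^n$ (by \Cref{prop:braid-results}(ii)), so $\gen{x^n} = \gen{\gbar} = \gen{x}$. Setting $H_1 = H_2 = \gen{x}$ we trivially get $H_1 H_2 = \gen{x} = \gen{H_1, H_2}$. Since both $x$ and $x^n$ are defined over $K$, \Cref{thmitem:prod-permuting-defK} applies and yields that $x^{n+1}$ is defined over $K$.

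I do not expect any serious obstacle: the only thing to double-check is the equality $\gen{x^n} = \gen{x}$, which is immediate from the tuple-level description of the monoid product in $\Comp(G)$ and from \Cref{prop:braid-results}(i), ensuring that the ``group'' invariant is well-defined on braid orbits. Everything else is a routine induction wrapped around \Cref{thmitem:prod-permuting-defK}.
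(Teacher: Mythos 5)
Your proof is correct and follows exactly the route the paper intends: the corollary is stated as a direct consequence of \Cref{thmitem:prod-permuting-defK}, using that $\gen{x^n}=\gen{x}$ makes $x$ and $x^n$ permuting, and inducting on $n$. The treatment of the trivial cases $n=0,1$ and the justification via \Cref{prop:braid-results} are all fine.
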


\begin{remark}
  To deduce \Cref{thmitem:prod-permuting-defK} from \Cref{thmitem:permuting-nid-singleton}, one can use \Cref{thm:prod-conj-defK} instead of \Cref{thm:cau}.
\end{remark}

\subsection{Applications and examples}
\label{subsn:applications-permuting}

The rest of this section is concerned with applications of \Cref{thm:galois-permuting}.

\begin{example}
  \label{ex:semi-direct-prod}
  \Cref{thm:galois-permuting} implies a slightly stronger version of \autocite[Théorème 3.5]{Cau}.
  Indeed, let $G=H_1 \ltimes H_2$ be a semi-direct product of groups.
  Assume that for $i=1,2$ there is a \emph{rigid} (cf. \autocite[Definition 2.4]{Cau}) $r_i$-tuple $\cbar_i$ of $\Q$-rational conjugacy classes of $H_i$.
  Then for $i=1,2$ there is a unique component $m_i \in \Comp(G)$ such that $\gen{m_i}=H_i$ and the $H_i$-multidiscriminant of $m_i$ counts the appearances of a class in $\cbar_i$.
  By the rigidity hypothesis and the branch cycle lemma (\Cref{cor:bcl}), these components are defined over $\Q$.
  Cau's results led him to observe that $(H_1, H_1, H_2, H_2)$ is a complete family of subgroups of $G$ and therefore $m_1^2 m_2^2$ is defined over $\Q$.
  We obtain a slightly better result: since $G$ is a semi-direct product of $H_1$ and $H_2$, the components $m_1, m_2$ are permuting and therefore the component $m_1 m_2$ is defined over $K$.

  Assume $G$ is a semi-direct product of symmetric groups: $G = \Sym_n \ltimes \Sym_m$.
  There are rigid $\Q$-rational triples of conjugacy classes of $\Sym_n$ and $\Sym_m$.
  The reasoning above shows that there is a component defined over $\Q$ of $G$-covers with six branch points, instead of the twelve needed by Cau, which already improved upon the thirty-two needed by Dèbes and Emsalem.
\end{example}

We now prove a result which can be perceived as the existence of a field norm for components:
\begin{corollary}
  \label{cor:norm-component}
  Let $x \in \Comp(G)$ be a component, and $H=\gen{x}$.
  Let $\Gamma_x$ be the subgroup of finite index of $\Gamma_K$ consisting of elements $\sigma \in \Gamma_K$ such that $\sigma.x=x$.
  Then the following component, which has monodromy group $H$, is defined over $K$:
    \[ \rmN_K(x) = \prod_{\sigma \in \Gamma_K/\Gamma_x} \sigma.x. \]
\end{corollary}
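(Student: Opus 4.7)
The plan is to apply \Cref{thmitem:sigma-prod-permuting} iteratively. First recall that the Galois action preserves the monodromy group: $\sigma.x$ has monodromy group $H$ for every $\sigma \in \Gamma_K$. Consequently any two such conjugates $\sigma.x$ and $\sigma'.x$ satisfy $H \cdot H = H = \gen{H,H}$, so they are permuting in the sense of \Cref{defn:permuting}. This already identifies the monodromy group of $\rmN_K(x)$ as the subgroup of $G$ generated by several copies of $H$, namely $H$ itself.

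Fix an ordering $\sigma_1,\ldots,\sigma_n$ of representatives of the cosets $\Gamma_K/\Gamma_x$, so that $\rmN_K(x) = \sigma_1.x \cdots \sigma_n.x$. To see that $\rmN_K(x)$ is defined over $K$, fix $\tau \in \Gamma_K$; I would show $\tau.\rmN_K(x) = \rmN_K(x)$. Each partial product $\sigma_1.x \cdots \sigma_k.x$ has monodromy group $H$, and the next factor $\sigma_{k+1}.x$ also has monodromy group $H$, so these two components are permuting. Iterating \Cref{thmitem:sigma-prod-permuting} yields
\[
  \tau.\rmN_K(x) = \prod_{i=1}^n (\tau\sigma_i).x.
\]
Left multiplication by $\tau$ permutes $\Gamma_K/\Gamma_x$, so $\{(\tau\sigma_i)\Gamma_x\}$ is a permutation of $\{\sigma_i\Gamma_x\}$. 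Since $\Comp(G)$ is commutative (\Cref{prop:braid-results}), reindexing the product gives $\tau.\rmN_K(x) = \rmN_K(x)$, as desired.

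Two small verifications complete the argument: the component $\sigma.x$ depends only on the coset $\sigma\Gamma_x$ (if $\sigma' = \sigma\gamma$ with $\gamma \in \Gamma_x$, then $\sigma'.x = \sigma.(\gamma.x) = \sigma.x$), and the commutativity of $\Comp(G)$ ensures that the unordered product defining $\rmN_K(x)$ is well-defined. I do not foresee any real obstacle: the corollary is a formal consequence of \Cref{thmitem:sigma-prod-permuting} together with the commutativity of the monoid of components.
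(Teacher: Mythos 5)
Your proof is correct. It follows a mildly different route from the paper's: the paper works directly with the set $\nid_H(\Gamma_K.x)$, showing it is the singleton $\{\rmN_K(x)\}$ by iterating \Cref{thmitem:permuting-nid-singleton}, and then invoking \Cref{thm:cau} together with the observation that $\sigma$ permutes $\Gamma_K.x$ to conclude $\sigma.\{\rmN_K(x)\}=\{\rmN_K(x)\}$. You instead iterate the distributivity law $\sigma.(xy)=(\sigma.x)(\sigma.y)$ of \Cref{thmitem:sigma-prod-permuting} across the product, then reindex using commutativity of $\Comp(G)$. Since \Cref{thmitem:sigma-prod-permuting} is itself a consequence of \Cref{thmitem:permuting-nid-singleton} and \Cref{thm:cau}, the two arguments rest on the same foundation, but your version is somewhat more modular: it stays at the level of the packaged conclusions of \Cref{thm:galois-permuting} and avoids re-opening the $\nid_H$ formalism. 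The iteration step is justified correctly — each partial product $\sigma_1.x\cdots\sigma_k.x$ has monodromy group $H$ and thus permutes with $\sigma_{k+1}.x$ — and your remark that $\sigma.x$ depends only on the coset $\sigma\Gamma_x$ is the right verification that the product is well-defined.
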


This result is a variant of \autocite[Corollaire 1.1/Corollaire 3.4]{Cau}: Cau shows that the concatenation of all components with a given degree is defined over $\Q$; here we are more precise by restricting our attention on a single Galois orbit, leading to a lower degree for the product component.

\begin{proof}
  Let $\Gamma_K.x$ be the set $\{\sigma.x \,\vert\, \sigma \in \Gamma_K/\Gamma_x\}$.
  Since all components of the form $\sigma.x$ have group $H$, repeated applications of \Cref{thmitem:permuting-nid-singleton} show that:
    \begin{equation}
      \label{eqn:ni-gamma-gbar}
      \nid_H(\Gamma_K.x) = \{\rmN_K(x)\}.
    \end{equation}
  Consider an automorphism $\sigma \in \Gamma_K$.
  The action of $\sigma$ permutes $\Gamma_K.x$.
  Finally:
  \begin{align*}
    \{ \rmN_K(x) \} 
    &= \nid_H(\Gamma_K.x) & \text{by \Cref{eqn:ni-gamma-gbar}}\\
    &= \nid_H(\sigma.(\Gamma_K.x)) & \text{because $\sigma$ permutes $\Gamma_K.x$} \\
    &= \sigma.\nid_H(\Gamma_K.x) & \text{by \Cref{thm:cau}} \\
    &= \sigma.\{\rmN_K(x)\} & \text{by \Cref{eqn:ni-gamma-gbar}} \\
    &= \{\sigma.\rmN_K(x)\}
  \end{align*}
  and thus $N_K(x)$ is defined over $K$.
\end{proof}

\subsection{Reduction of the Galois action to components of small degree}
\label{susbn:galois-small}

In this subsection we express the Galois action of $\Gamma_K$ on components in terms on the action on components of small degree.
Let $\psi(G)$ be the sum of the orders of the elements of $G$:
  \[ \psi(G) \eqdef \sum_{g \in G} \ord(g). \]

Consider a $n$-tuple $\gbar = (g_1, \ldots, g_n) \in G^n$, and let $H = \gen{\gbar}$.
If $n > \psi(G)$, then there is an element $g \in G$ which appears at least $\ord(g)+1$ times in the tuple $\gbar$.
Usual braid manipulations allow one to move these occurrences of $g$ to the beginning of the tuple.
This shows that we have the following equality in $\Comp(G)$:
  \[
    \gbar = (\underbrace{g,\ldots,g}_{\ord(g)}) y
  \]
for some $y \in \Comp(G)$ of group $H$.
Note that $(g,\ldots,g)$ and $y$ are permuting components and that $\gen{(g,\ldots,g)} = \gen{g}$ is abelian.
We have:
\begin{align*}
  \sigma.\gbar & = 
  \left (
    \sigma.(g,\ldots,g)
  \right )
  \left (
    \sigma.y
  \right ) & \text{by \Cref{thmitem:sigma-prod-permuting}} \\
  & = 
  \left
    (g^{\chi(\sigma^{-1})},\ldots,g^{\chi(\sigma^{-1})}
  \right )
  \left (
    \sigma.y
  \right )
  & \text{by the branch cycle lemma (\Cref{coritem:galact-multidisc}).}
\end{align*}

We can iterate this factorization process until the size of $y$ is smaller than $\psi(G)$: this shows that the Galois action on components is entirely determined by the cyclotomic character and the Galois action on components of small degree.
We turn this into a precise proposition:

\begin{proposition}
  \label{prop:reduction-galact}
  Let $x \in \Comp(G)$ be a component and $H = \gen{x}$.
  There are elements $g_1, \ldots, g_r \in H$ and a component $y \in \Comp(G)$ of group $H$ with $\deg(y) \leq \psi(G)$ such that:
    \[ x = \left ( \prod_{i=1}^r (\underbrace{g_i,\ldots,g_i}_{\ord(g_i)}) \right ) y. \]
  Moreover, once $x$ is expressed under this form, the Galois action of an automorphism $\sigma \in \Gamma_K$ on $x$ can be expressed in terms of the cyclotomic character $\chi$, and of the Galois action on components of degree $\leq \psi(G)$:
    \[ \sigma.x = \left ( \prod_{i=1}^r \Big( \underbrace{g_i^{\chi(\sigma^{-1})},\ldots,g_i^{\chi(\sigma^{-1})}}_{\ord(g_i)} \Big) \right ) (\sigma.y). \]
\end{proposition}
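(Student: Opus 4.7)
The plan is a direct induction on $\deg(x)$ that rigorously packages the sketch preceding the statement. In the base case $\deg(x) \leq \psi(G)$, take $r=0$ and $y = x$. In the inductive step, assume $\deg(x) > \psi(G)$ and let $\gbar = (g_1, \ldots, g_n)$ represent $x$. Since the entries of $\gbar$ lie in $H$ and $\sum_{h \in H} \ord(h) \leq \psi(G) < n$, a pigeonhole argument yields some $g \in H$ appearing at least $\ord(g) + 1$ times in $\gbar$. Elementary braid moves transpose adjacent entries at the cost of conjugating one of them, and conjugation by $g^{\pm 1}$ fixes $g$; hence I can migrate $\ord(g)$ copies of $g$ to the front of the tuple without losing any remaining occurrences, producing a representative of the form $(g, \ldots, g, h_1, \ldots, h_{n-\ord(g)})$. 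Since the prefix $(g, \ldots, g)$ of length $\ord(g)$ is product-one, so is the suffix; this gives a factorization $x = (g,\ldots,g)\cdot x'$ in $\Comp(G)$ with $\deg(x') = \deg(x) - \ord(g) < \deg(x)$ and $\gen{x'}=H$ (the suffix still contains $g$ together with conjugates of all other entries of $\gbar$). Applying the induction hypothesis to $x'$ yields the required factorization.

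For the Galois action, I iterate \Cref{thmitem:sigma-prod-permuting}. Writing $x = F_1 \cdots F_r \cdot y$ with $F_i = (g_i,\ldots,g_i)$ and setting $z_i = F_i \cdots F_r \cdot y$ (so $z_1 = x$ and $z_{r+1} = y$), I observe at each step that $\gen{F_i} = \gen{g_i} \subseteq H = \gen{z_{i+1}}$, so $F_i$ and $z_{i+1}$ are permuting in the sense of \Cref{defn:permuting}. Hence $\sigma.z_i = (\sigma.F_i)(\sigma.z_{i+1})$ by \Cref{thmitem:sigma-prod-permuting}, and iterating gives $\sigma.x = (\sigma.F_1) \cdots (\sigma.F_r)(\sigma.y)$.

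It remains to identify $\sigma.F_i$. The component $F_i$ has abelian monodromy group $\gen{g_i}$ and its $\gen{g_i}$-multidiscriminant assigns $\ord(g_i)$ to the class $\{g_i\}$. By \Cref{coritem:galact-multidisc}, the multidiscriminant of $\sigma.F_i$ assigns $\ord(g_i)$ to the unique class $\gamma$ satisfying $\gamma^{\chi(\sigma)} = g_i$, namely $\gamma = g_i^{\chi(\sigma^{-1})}$. Since components with abelian monodromy group are characterized by their multidiscriminants (as noted inside the proof of \Cref{coritem:bcl-abelian}), this forces $\sigma.F_i = (g_i^{\chi(\sigma^{-1})}, \ldots, g_i^{\chi(\sigma^{-1})})$, completing the formula.

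The main friction point is justifying that the pigeonhole braid manipulation yields an honest factorization in $\Comp(G)$; this reduces to the observation that a product-one prefix forces the suffix to be product-one, combined with the basic braid manipulations of \Cref{prop:braid-results}. The numerical part (tracking the power to which $g_i$ is raised under $\sigma$) is a direct application of the branch cycle lemma, and no further ingredients beyond results already established in the excerpt are needed.
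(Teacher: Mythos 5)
Your argument is correct and matches the paper's own reasoning, which is given as a sketch in the paragraph immediately preceding the proposition: both use the pigeonhole bound $n > \psi(G)$ to extract an element $g$ appearing at least $\ord(g)+1$ times, braid-shuffle $\ord(g)$ copies to the front, recurse on the suffix, and then iterate Theorem~\ref{thmitem:sigma-prod-permuting} together with the branch cycle lemma (and the abelian case of Corollary~\ref{coritem:bcl-abelian}) to track the Galois action. You have simply supplied the small bookkeeping details (product-one of the suffix, preservation of $\gen{\cdot}$, the telescoping permuting argument) that the paper leaves implicit.
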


Here is another example of this phenomenon.
Let $H$ be a subgroup of $G$ and $c$ a $K$-rational conjugation-invariant subset of $H$.
Denote by $\mathcal{C}_{H,c}$ the set of components $x \in \Comp(G)$ such that $\gen{x} = H$ and $c_H(x) \subseteq c$.
Then:
\begin{proposition}
  \label{prop:small-deg-all-defK}
  Assume every component $x \in \mathcal{C}_{H,c}$ of degree $\leq 2\card{c}\psi(G)$ with a $K$-rational $(H,c)$-multidiscriminant is defined over $K$.
  Then, every component $x \in \mathcal{C}_{H,c}$ with a $K$-rational $(H,c)$-multidiscriminant is defined over $K$.
\end{proposition}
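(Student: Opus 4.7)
The plan is to proceed by strong induction on $\deg(x)$.  The base case $\deg(x) \leq 2\card{c}\psi(G)$ follows immediately from the hypothesis of the proposition; assume henceforth that $\deg(x) > 2\card{c}\psi(G)$ and that the statement holds for every component in $\mathcal{C}_{H,c}$ of strictly smaller degree with a $K$-rational $(H,c)$-multidiscriminant.  Since the set $D$ of conjugacy classes of $H$ contained in $c$ satisfies $\card{D} \leq \card{c}$, pigeonhole applied to $\deg(x) = \sum_{\gamma \in D} \mu_{H,c}(x)(\gamma)$ yields a class $\gamma_0 \in D$ with $\mu_{H,c}(x)(\gamma_0) > 2\psi(G) \geq 2\ord(\gamma_0)$; by $K$-rationality of the multidiscriminant, the same bound holds for every class $\gamma$ in the Galois orbit $O$ of $\gamma_0$ under the power action $p_\sigma$ (note that $O \subseteq D$ since $c$ is $K$-rational).

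Next I would peel off a defined-over-$K$ factor.  Fix $g \in \gamma_0$, let $y = (g,\ldots,g)$ have $\ord(g)$ copies, and set $z = \rmN_K(y)$; by \Cref{cor:norm-component} this is defined over $K$.  Since $z$ is a concatenation of tuples of the form $(g^{\chi(\sigma^{-1})}, \ldots, g^{\chi(\sigma^{-1})})$, its $(H,c)$-multidiscriminant is supported on $O$, constant on $O$ (by \Cref{coritem:bcl-rational-multidisc}), and bounded by $\mu_{H,c}(z)(\gamma) \leq \ord(g) \cdot \card{\Gamma_K/\Gamma_y} \leq \sum_{h \in \gen{g}} \ord(h) \leq \psi(G)$ for every $\gamma \in O$.

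The crux is to realize $x = z \cdot x'$ in $\Comp(G)$ with $x' \in \mathcal{C}_{H,c}$ and $\mu_{H,c}(x')$ still $K$-rational.  I pick a generating sub-tuple $\gbar_0$ of some tuple representing $x$ by a greedy procedure, so that $\card{\gbar_0} \leq \log_2 \card{H} \leq \psi(G)$.  The abundance estimate $\mu_{H,c}(x)(\gamma) > 2\psi(G) \geq \mu_{H,c}(z)(\gamma) + \card{\gbar_0}$ for $\gamma \in O$ (together with the trivial inequality for $\gamma \notin O$, where $\mu_{H,c}(z)(\gamma) = 0$) leaves enough room to apply braid moves and repeated invocations of \Cref{propitem:braid-conjugate} to conjugate suitably chosen representatives of each class $\gamma \in O$ into the specific elements $g^{\chi(\sigma^{-1})}$ prescribed by the tuple presentation of $z$, and to collect them at the beginning of the tuple, while leaving $\gbar_0$ untouched in the complement.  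The resulting decomposition $x = z \cdot x'$ has $\mu_{H,c}(x') = \mu_{H,c}(x) - \mu_{H,c}(z)$, which is $K$-rational as a difference of $K$-rational functions; moreover $c_H(x') \subseteq c$ and $\gen{x'} \supseteq \gen{\gbar_0} = H$, so $x' \in \mathcal{C}_{H,c}$.

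Since $\deg(x') < \deg(x)$, the inductive hypothesis gives that $x'$ is defined over $K$.  As $\gen{z} \subseteq \gen{g} \subseteq H = \gen{x'}$, the components $z$ and $x'$ are permuting in the sense of \Cref{defn:permuting}, and \Cref{thmitem:prod-permuting-defK} implies that $x = z \cdot x'$ is defined over $K$.  I expect the braid-theoretic extraction of $z$ to be the principal obstacle: one must repeatedly invoke \Cref{propitem:braid-conjugate} to conjugate individual entries into the precise representatives dictated by $\rmN_K(y)$ while simultaneously preserving the generating sub-tuple $\gbar_0$.  The constant $2\card{c}\psi(G)$ in the hypothesis is calibrated precisely to accommodate both $\mu_{H,c}(z)(\gamma) \leq \psi(G)$ and $\card{\gbar_0} \leq \psi(G)$ after the pigeonhole over the $\card{D} \leq \card{c}$ classes.
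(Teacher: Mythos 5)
Your overall strategy matches the paper's: induction on degree, peeling off a small component defined over $K$ whose monodromy group lies inside $H$ (so that the factors are permuting), and invoking \Cref{thmitem:prod-permuting-defK}.  The pigeonhole argument, the choice of $z$ as the ``norm'' $\rmN_K(g,\ldots,g)$ (which is the same as the paper's $y$ up to reindexing), and the bound $\deg(z)\le\psi(G)$ all line up with the paper's computation.

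The gap is the factorization step $x = z\cdot x'$.  You flag it yourself as the ``principal obstacle,'' and it genuinely is not covered by the braid manipulations you allude to.  \Cref{propitem:braid-conjugate} only lets you conjugate a sub-tuple $\gbar_2$ that is \emph{product-one}; a lone entry $g_i \ne 1$ of a representing tuple is not a product-one sub-tuple, so you cannot directly turn it into a preferred representative of its class by that lemma.  The paper resolves this by citing a dedicated factorization lemma (\autocite[Lemma 4.6]{geomringcomp}), whose hypothesis is $\mu_{H,c}(x)(\gamma) \ge \ord(\gamma)\card{\gamma} + \mu_{H,c}(y)(\gamma)$ for each $\gamma$ appearing in the factor to be extracted.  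That is a different quantity from your $\card{\gbar_0}$, and your ``generating sub-tuple of size $\le \log_2\card{H}$'' condition is a new, unproven sufficiency criterion for extraction rather than a consequence of anything cited in the paper.  To make your argument rigorous you would need either to cite that factorization lemma (in which case you should replace $\card{\gbar_0}$ by $\ord(\gamma)\card{\gamma}$ in the abundance estimate, noting $\ord(\gamma)\card{\gamma} \le \psi(G)$) or to supply an actual proof that the generating sub-tuple trick works, which would amount to reproving the lemma.  The remaining small issue is cosmetic: the paper phrases the peeled-off component via \Cref{coritem:bcl-abelian} rather than \Cref{cor:norm-component}, but these produce the same component, so that difference is immaterial.
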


\begin{proof}
  We prove the result by induction.
  Consider a component $x \in \mathcal{C}_{H,c}$ of degree $n > 2\card{c}\psi(G)$ with a $K$-rational $(H,c)$-multidiscriminant.
  Assume that every component in $\mathcal{C}_{H,c}$ of degree $<n$ with a $K$-rational $(H,c)$-multidiscriminant is defined over $K$.
  Choose a tuple $\gbar \in c^n$ representing $x$.
  Since $n > 2\card{c}\psi(G)$, there is some $g \in c$ which appears at least $2\ord(g)\card{c} + 1$ times in $\gbar$.

  Let $g_1,\ldots,g_r$ be the elements obtained as $g^{\chi(\sigma)}$ for some $\sigma \in \Gamma_K$.
  By \Cref{coritem:bcl-abelian}, the following component, whose group is the abelian group $\gen{g}$, is defined over $K$:
    \[ y \eqdef (\underbrace{g_1,\ldots,g_1}_{\ord(g)}, \underbrace{g_2,\ldots,g_2}_{\ord(g)}, \ldots, \underbrace{g_r,\ldots,g_r}_{\ord(g)}). \]
  In particular, the component $y$ has a $K$-rational $(H,c)$-multidiscriminant by \Cref{coritem:bcl-rational-multidisc}.

  We now show that there is a component $z$ with $\gen{z} = H$ such that $x = yz$.
  For this, we apply the factorization result of \autocite[Lemma 4.6]{geomringcomp}.
  Consider a conjugacy class $\gamma$ of $H$ which appears in $y$.
  Then:
  \begin{itemize}
    \item
      The conjugacy class $\gamma$ is some $\chi(\sigma)$-th power of the conjugacy class of $g$, which appears at least $2\ord(g)\card{c}+1$ times in $\gbar$ because $g$ itself does.
      Since $x$ has a $K$-rational $(H,c)$-multidiscriminant, we have $\mu_{H,c}(x)(\gamma) \geq 2\ord(g)\card{c}+1$.
    \item
      The conjugacy class $\gamma$ appears at most $\ord(g)\card{c}$ times in $y$ since $\deg(y) \leq \ord(g)\card{c}$.
  \end{itemize}
  Finally:
  \begin{align*}
    \mu_{H,c}(x)(\gamma) & \geq 2\ord(g)\card{c}+1 \\
    & \geq \ord(g)(\card{\gamma}+\card{c}) \\
    & = \ord(\gamma)\card{\gamma} + \ord(g)\card{c} \\
    & \geq \ord(\gamma)\card{\gamma} + \mu_{H,c}(y)(\gamma).
  \end{align*}

  By \autocite[Lemma 4.6]{geomringcomp}, there exists $z \in \Comp(G)$ such that $x = yz$ and $\gen{z}=H$, and in particular $z \in \mathcal{C}_{H,c}$.
  Since $x = yz$ and $y$ both have $K$-rational $(H,c)$-multidiscriminants, the component $z$ has a $K$-rational $(H,c)$-multidiscriminant too.
  By the induction hypothesis, $z$ is defined over $K$.
  Moreover $\gen{y} \subseteq H$ so $y$ and $z$ are permuting, and thus $x = yz$ is defined over $K$ by \Cref{thmitem:prod-permuting-defK}.
  We conclude by induction.
\end{proof}

\begin{remark}
  When we have discussed the lifting invariant in \Cref{sn:liftinv}, it will appear that the hypothesis ``with a $K$-rational $(H,c)$-multidiscriminant'' in \Cref{prop:small-deg-all-defK} can be replaced by the more precise necessary condition ``whose $(H,c)$-lifting invariant is $\Gamma_K$-invariant''.
  The proof of \Cref{prop:small-deg-all-defK} can be reproduced identically except for the two following details:
  \begin{itemize}
    \item
      That $\gamma$ appears at least $2\ord(g)\card{c}+1$ times in $\gbar$ follows from the fact that a component with a $\Gamma_K$-invariant $(H,c)$-lifting invariant also has a $K$-rational $(H,c)$-multidiscriminant.
      This follows directly from the definition of the $\Gamma_K$-action on lifting invariants in \Cref{subsn:galact-liftinv}.
    \item
      To apply the induction hypothesis, we have to show that the component $z$ obtained using the factorisation lemma has a $\Gamma_K$-invariant lifting invariant.
      At that point in the proof, we know that $x = yz$, and that $x$ and $y$ both have $\Gamma_K$-invariant lifting invariants.
      First notice that $x=yz$ implies:
        \begin{equation}
          \label{eqn:pihcgbar}
          \Pi_{H,c}(x) = \Pi_{H,c}(y)\Pi_{H,c}(z).
        \end{equation}
      Now, consider an automorphism $\sigma \in \Gamma_K$.
      \Cref{thm:galois-gpact-liftinv} together with the equality $x=yz$ imply:
        \[ \sigma.\Pi_{H,c}(x) = (\sigma.\Pi_{H,c}(y))(\sigma.\Pi_{H,c}(z)) \]
      i.e.:
        \begin{equation}
          \label{eqn:pihcgbar-sigma}
          \Pi_{H,c}(x) = \Pi_{H,c}(y)(\sigma.\Pi_{H,c}(z)).
        \end{equation}
      Since the lifting invariant takes values in a group, \Cref{eqn:pihcgbar} and \Cref{eqn:pihcgbar-sigma} together imply $\Pi_{H,c}(z) = \sigma.\Pi_{H,c}(z)$.
      Hence the $(H,c)$-lifting invariant of $z$ is $\Gamma_K$-invariant.
  \end{itemize}
\end{remark}

\begin{remark}
  The constant $2 \card{c} \psi(G)$ in \Cref{prop:small-deg-all-defK} can easily be improved to:
  \[
    \sum_{\gamma \in D}
    \card{\gamma}
    \left [
      \ord(\gamma)
      \bigg(
        \card{\gamma} +
        \varphi
        \Big(
          \ord(\gamma)
        \Big)
      \bigg)
      - 1
    \right ]
  \]
  where $D$ is the set of conjugacy classes of $H$ contained in $c$ and $\varphi$ is Euler's totient function.
\end{remark}

\begin{example}
  In the situation of \Cref{ex:symgp}, where $G$ is the symmetric group $\Sym_d$ and $c$ is the set of transpositions in $G$, checking that all components of degree $\leq \frac{1}{2} d^4$ are defined over $\Q$ would have been enough to prove that they are all defined over $\Q$.
\end{example}

\subsection{Generalized permuting components}
\label{subsn:generalized-permuting}

In this subsection, we prove \Cref{thm:galois-permuting-family}, which generalizes both \Cref{thm:galois-permuting} and \autocite[Proposition 2.10]{Cau}.
First, we introduce the following definition:

\begin{definition}
  Let $x_1,\ldots,x_n \in \Comp(G)$ be components, let $H_i = \gen{x_i}$ and $H=\gen{H_1,\ldots,H_n}$.
  The family $(x_1,\ldots,x_n)$ is \emph{permuting} when for all elements $\gamma_1, \ldots,\gamma_n \in H$ and for all $i \in \{2,\ldots,n\}$ we have:
  \begin{align*}
    \text{if } & \gen{H_1,H_2,\ldots, H_{i-1}, H_i, H_{i+1}^{\gamma_{i+1}},\ldots,H_n^{\gamma_n}} = H, \\
    \text{then } & \gen{H_1,H_2,\ldots,H_{i-1}, H_{i+1}^{\gamma_{i+1}},\ldots,H_n^{\gamma_n}} H_i = H.
  \end{align*}
\end{definition}

\begin{theorem}
  \label{thm:galois-permuting-family}
  Let $(x_1,\ldots,x_n)$ be a permuting family of components.
  Then:
  \begin{thmenum}
    \item
      \label{thmitem:permuting-family-nid-singleton}
      $\nid_H(x_1,\ldots,x_n)=\{x_1 \cdots x_n\}$.
    \item
      For all automorphisms $\sigma \in \Gamma_K$, we have $\sigma.(x_1 \cdots x_n) = (\sigma.x_1) \cdots (\sigma.x_n)$.
    \item
      If $x_1,\ldots,x_n$ are defined over $K$ then $x_1 \cdots x_n$ is defined over $K$.
  \end{thmenum}
\end{theorem}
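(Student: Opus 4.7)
The proof mirrors that of \Cref{thm:galois-permuting}: I first prove (i), from which (ii) and (iii) follow by the same reasoning as in the $n=2$ case. Indeed, $(\sigma.x_1,\ldots,\sigma.x_n)$ remains a permuting family since the Galois action preserves monodromy groups, and \Cref{thm:cau} places $\sigma.(x_1\cdots x_n)$ into $\nid_H(\sigma.x_1,\ldots,\sigma.x_n)$, which by (i) equals $\{(\sigma.x_1)\cdots(\sigma.x_n)\}$; part (iii) is the specialization to $\sigma.x_i=x_i$.

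For (i), fix $p = x_1^{\gamma_1}\cdots x_n^{\gamma_n}\in\nid_H(x_1,\ldots,x_n)$, so that $\gen{p}=H$. The plan is to deconjugate the $\gamma_i$ one index at a time, from left to right. First, \Cref{propitem:braid-conjugate} with $\gbar_1$ and $\gbar_3$ empty and $\gbar_2 = p$ allows me to conjugate the whole tuple by $\gamma_1^{-1}\in H$, reducing to the case $\gamma_1 = 1$. Next, assuming inductively that $\gamma_1=\cdots=\gamma_{i-1}=1$, I apply the permuting condition at index $i$: its conclusion $\gen{H_1,\ldots,H_{i-1},H_{i+1}^{\gamma_{i+1}},\ldots,H_n^{\gamma_n}}\cdot H_i = H$ lets me factor $\gamma_i = \alpha\beta$ with $\alpha$ in this generated subgroup and $\beta\in H_i$. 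Since $\beta\in\gen{x_i}$, \Cref{propitem:braid-conjugate} gives $x_i^{\gamma_i} = x_i^{\alpha}$, and a second use of the same lemma with $\gbar_1 = x_1\cdots x_{i-1}$, $\gbar_2 = x_i$, $\gbar_3 = x_{i+1}^{\gamma_{i+1}}\cdots x_n^{\gamma_n}$ and $\alpha\in\gen{\gbar_1,\gbar_3}$ lets me replace $x_i^\alpha$ by $x_i$. Iterating for $i=2,\ldots,n$ yields $p = x_1\cdots x_n$.

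The main obstacle is that applying the permuting condition at index $i$ requires the hypothesis $\gen{H_1,\ldots,H_{i-1},H_i,H_{i+1}^{\gamma_{i+1}},\ldots,H_n^{\gamma_n}}=H$, whereas a priori we only know the analogous equality with $H_i$ replaced by $H_i^{\gamma_i}$. I resolve this with an auxiliary claim established by a separate downward induction on $i$: for every $i\in\{1,\ldots,n\}$ and every $\gamma_{i+1},\ldots,\gamma_n\in H$, one has $\gen{H_1,\ldots,H_i,H_{i+1}^{\gamma_{i+1}},\ldots,H_n^{\gamma_n}}=H$. The base case $i=n$ is the definition of $H$. For the step from $i+1$ down to $i$, the inductive hypothesis furnishes the premise needed to apply the permuting condition at index $i+1$, yielding $\gen{H_1,\ldots,H_i,H_{i+2}^{\gamma_{i+2}},\ldots,H_n^{\gamma_n}}\cdot H_{i+1}=H$; factoring $\gamma_{i+1}=\alpha\beta$ with $\alpha$ in the left factor and $\beta\in H_{i+1}$ gives $H_{i+1}^{\gamma_{i+1}}=H_{i+1}^{\alpha}$, and because $\alpha^{\pm 1}$ already belong to $\gen{H_1,\ldots,H_i,H_{i+2}^{\gamma_{i+2}},\ldots,H_n^{\gamma_n}}$, including $H_{i+1}^{\alpha}$ or $H_{i+1}$ produces the same generated subgroup, which equals $H$ by the inductive hypothesis.
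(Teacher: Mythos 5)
Your proof is correct, and it is built from the same two ingredients as the paper's: the factorization $\gamma_i = \alpha\beta$ supplied by the permuting condition, and two applications of \Cref{propitem:braid-conjugate} to absorb $\beta \in H_i$ and then $\alpha \in \gen{H_1,\ldots,H_{i-1},H_{i+1}^{\gamma_{i+1}},\ldots,H_n^{\gamma_n}}$. The difference is purely in the induction bookkeeping, and you correctly spotted the one subtle point: working left to right from $x_1^{\gamma_1}\cdots x_n^{\gamma_n}$, the generation hypothesis you can read off at stage $i$ involves $H_i^{\gamma_i}$ rather than $H_i$, so you need your auxiliary downward induction (which is correct as stated: the conjugate $H_{i+1}^{\gamma_{i+1}} = H_{i+1}^{\alpha}$ with $\alpha$ in the complementary subgroup generates the same group together with it as $H_{i+1}$ does). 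The paper avoids this extra lemma by running the induction in the opposite direction: it starts from the trivial identity $x_1\cdots x_n = x_1\cdots x_n$ and re-introduces the conjugators from right to left, so that at each stage the tuple in hand is braid-equivalent to $x_1\cdots x_n$ and hence automatically has group $H$, which is exactly the premise needed to invoke the permuting condition; the hypothesis $\gen{x_1^{\gamma_1}\cdots x_n^{\gamma_n}} = H$ is then used only to reduce to $\gamma_1 = 1$. Your route costs an extra (correct) group-theoretic lemma; the paper's costs nothing but is less transparent about why the premise is always available --- your auxiliary claim is in fact an implicit corollary of the paper's argument. Your deduction of (ii) and (iii) from (i), via \Cref{thm:cau} and the observation that the Galois action preserves monodromy groups so that $(\sigma.x_1,\ldots,\sigma.x_n)$ is again permuting, matches the paper exactly.
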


The case $n=2$ gives back \Cref{thm:galois-permuting}.
The hypothesis of \Cref{thm:galois-permuting-family} is slightly weaker than the one required to apply \Cref{thm:galois-permuting} multiple times recursively.

\begin{proof}
  We focus on proving point (i), from which points (ii) and (iii) follow like in the proof of \Cref{thm:galois-permuting}.
  Let $\gamma_1,\ldots,\gamma_n \in G$ such that $\gen{\prod x_i^{\gamma_i}}=H$.
  First we can assume $\gamma_1 = 1$ as in the proof of \Cref{thmitem:permuting-nid-singleton}.
  We proceed by induction.
  Assume we have shown:
  \[ x_1 \cdots x_n = x_1 x_2 \cdots x_{i-1} x_i x_{i+1}^{\gamma_{i+1}} \cdots x_n^{\gamma_n}. \]
  In particular, we have $\gen{H_1,H_2,\ldots, H_{i-1}, H_i, H_{i+1}^{\gamma_{i+1}},\ldots,H_n^{\gamma_n}} = H $.
  Since $(x_1,\ldots,x_n)$ is permuting, we can write $\gamma_i = \gamma_i^{(1)} \gamma_i^{(2)}$ with $\gamma_i^{(1)} \in \gen{H_1,H_2,\ldots,H_{i-1}, H_{i+1}^{\gamma_{i+1}},\ldots,H_n^{\gamma_n}}$ and $\gamma_i^{(2)} \in H_i$.
  Therefore:
    \begin{align*}
      x_1 \cdots x_n
      & = x_1 \cdots x_{i-1}x_i^{\gamma_i^{(2)}} x_{i+1}^{\gamma_{i+1}} \cdots x_n^{\gamma_n}
      & \text{by \Cref{propitem:braid-conjugate}, because } \gamma_i^{(2)} \in \gen{x_i}
      \\ & = x_1 \cdots x_{i-1} \left ( x_i^{\gamma_i^{(2)}} \right )^{\gamma_i^{(1)}} x_{i+1}^{\gamma_{i+1}} \cdots x_n^{\gamma_n} & \text{ because } \gamma_i^{(1)} \in \gen{x_1 \cdots x_{i-1}, x_{i+1}^{\gamma_{i+1}}  \cdots x_n^{\gamma_n} }
      \\ & =x_1 \cdots x_{i-1} x_i^{\gamma_i} x_{i+1}^{\gamma_{i+1}}  \cdots x_n^{\gamma_n}
    \end{align*}
  and we conclude by induction.
\end{proof}

We now give an application of \Cref{thm:galois-permuting-family}:

\begin{example}
  \label{ex:v-evw2}
  Let $c$ be a $K$-rational conjugation-invariant set of $G$.
  Assume that $c$ is \emph{complete}, i.e. no proper subgroup of $G$ intersects every conjugacy class contained in $c$ (for example, Jordan's lemma implies that $c=G\setminus\{1\}$ is complete).

  The following component (introduced in \autocite[Paragraph 5.5]{EVW2}) is defined over $K$:
    \[ V = \prod_{g \in c} (\underbrace{g,\ldots,g}_{\ord(g)}). \]

  Indeed, consider an automorphism $\sigma \in \Gamma_K$.
  Since $\gen{g}$ is abelian, \Cref{coritem:galact-multidisc} implies that $\sigma.(g,\ldots,g)=(g^{\chi(\sigma^{-1})},\ldots,g^{\chi(\sigma^{-1})})$.
  The profinite integer $\chi(\sigma^{-1})$ is invertible and so the action of $\sigma$ permutes the factors of $V$.
  Now:
    \[
      \sigma.V
      = \sigma.\prod_{g \in c} (g,\ldots,g)
      \in \nid(\left \{(g,\ldots,g) \verti g \in c \right \}).
    \]
  We want to apply \Cref{thmitem:permuting-family-nid-singleton} to show that $\nid(\left \{(g,\ldots,g) \verti g \in c \right \})$ is a singleton, from which $\sigma.V=V$ follows.
  Consider an element $g \in c$ and elements $\gamma_{g'} \in G$ for all $g' \in c\setminus\{g\}$, such that $G$ is generated by $g$ together with the elements $(g')^{\gamma_{g'}}$ for $g' \in c\setminus\{g\}$.
  We want to show $\gen{(g')^{\gamma_{g'}} \text{ for } g' \in c\setminus\{g\}} \gen{g} = G$.
  There are two distinct cases:
  \begin{itemize}
    \item
      If $g \in Z(G)$, then this follows easily from, say, the fact that $\gen{g}$ is normal in $G$.
    \item
      If $g \not\in Z(G)$, then there is a $g' \in c\setminus\{g\}$ such that $g$ and $g'$ are conjugate.
      Therefore $\gen{g'^{\gamma_{g'}} \text{ for } g' \in c\setminus\{g\}}$ is a subgroup of $G$ that intersects every conjugacy class contained in $c$, and therefore it equals $G$ by the completeness assumption.
  \end{itemize}
\end{example}

\section{The lifting invariant approach}
\label{sn:liftinv}

In this section, we use the lifting invariant of \autocite{EVW2, wood} to study \Cref{qn:prod-defined}.
We first recall known properties of this invariant (\Cref{subsn:liftinv}) and then give arithmetic applications (\Cref{subsn:applications-liftinv} and \Cref{subsn:galact-liftinv}), including \Cref{thm:mbig-prod} (which is \Cref{thmitem:main-prod-Mbig-defK}).

\subsection{Presentation of the lifting invariant}
\label{subsn:liftinv}

\subsubsection{Definition and first properties.}

In this subsection, we present the lifting invariant.
For exhaustivity and convenience of the reader, we include some proofs for known facts.
In what follows, $H$ is a subgroup of $G$ and $D$ is a set of conjugacy classes of $H$ which together generate $H$.
We denote by $c$ the union of the conjugacy classes in $D$.

We define the group $U(H,c)$ in the following way: it is generated by elements $[g]$ for each $g \in c$, satisfying the relations $[g][h][g]^{-1}=[ghg^{-1}]$ for all $g,h \in c$.

\begin{definition}
  Let $\gbar = (g_1,\ldots,g_n) \in c^n$ be a tuple.
  Its \emph{(H,c)-lifting invariant} is the following element of $U(H,c)$:
    \[ \Pi_{H,c}(\gbar) = [g_1]\cdots[g_n]. \]
\end{definition}


\begin{proposition}
  \label{prop:liftinv-braidinv}
  The $(H,c)$-lifting invariant $\Pi_{H,c}(\gbar)$ depends only on the braid orbit of $\gbar$.
\end{proposition}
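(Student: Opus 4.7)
The plan is to verify that $\Pi_{H,c}$ is invariant under the action of each generator $\sigma_i$ of the braid group $\B_n$ on tuples; by the presentation of $\B_n$ recalled in \Cref{subsubsn:conf}, this suffices.

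Recall that the action is given by
\[
  \sigma_i.(g_1,\ldots,g_n)
  = (g_1,\ldots,g_{i-1},g_{i+1}^{g_i},g_i,g_{i+2},\ldots,g_n).
\]
So I would fix $\gbar \in c^n$ and compare $\Pi_{H,c}(\gbar)$ with $\Pi_{H,c}(\sigma_i.\gbar)$. The factors in positions other than $i$ and $i+1$ are unchanged, so it reduces to showing the local identity
\[
  [g_{i+1}^{g_i}][g_i] = [g_i][g_{i+1}]
\]
in $U(H,c)$. This is immediate from the defining relation $[g][h][g]^{-1} = [ghg^{-1}]$: taking $g = g_i$ and $h = g_{i+1}$ yields $[g_{i+1}^{g_i}] = [g_i][g_{i+1}][g_i]^{-1}$, and multiplying both sides on the right by $[g_i]$ gives the desired equality. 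One should also briefly note that $\sigma_i.\gbar$ still lies in $c^n$, because $c$ is conjugation-invariant (as a union of conjugacy classes), so the conjugated entry $g_{i+1}^{g_i}$ belongs to $c$ and the lifting invariant of $\sigma_i.\gbar$ is well-defined.

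There is no serious obstacle here: the only ingredient beyond the defining presentation of $U(H,c)$ is the generation of $\B_n$ by the elementary braids, which is already stated in \Cref{subsubsn:conf}. The proof is essentially a one-line calculation inside $U(H,c)$.
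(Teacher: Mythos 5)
Your proof is correct and is essentially the paper's own argument: both reduce invariance under the elementary braids $\sigma_i$ to the identity $[g_{i+1}^{g_i}][g_i]=[g_i][g_{i+1}]$, which is a rewriting of the defining relation of $U(H,c)$. The extra remark that $\sigma_i.\gbar$ stays in $c^n$ because $c$ is conjugation-invariant is a harmless (and welcome) precision.
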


\begin{proof}
  The relation $[g][h][g]^{-1}=[ghg^{-1}]$ in $U(H,c)$ can be rewritten as $[g][h]=[h^g][g]$.
  So the $(H,c)$-lifting invariant is unchanged by elementary braids, which generate the braid group.
\end{proof}

\Cref{prop:liftinv-braidinv} implies that we can talk about the $(H,c)$-lifting invariant $\Pi_{H,c}(x)$ of a component $x \in \Comp(G)$ as soon as $\gen{x} \subseteq H$ and $c_H(x) \subseteq c$.
If $(H,c)$ is not specified, the \emph{lifting invariant} of a component is its $(H,c)$-lifting invariant with $H=\gen{\gbar}$ and $c=c_H(\gbar)$.

We denote by $\pi$ the morphism $U(H,c) \rightarrow H$ induced by the formula $[g] \mapsto g$.
The use of the letter $\pi$ is justified by the observation that for all $\gbar \in c^n$, we have $\pi(\Pi_{H,c}(\gbar))=\pi\gbar$.
The kernel of the morphism $\pi$ is denoted by $U_1(H,c)$.

\begin{proposition}
  \label{prop:ker-pi-central}
  The subgroup $U_1(H,c) = \ker(\pi)$ is central in $U(H,c)$.
\end{proposition}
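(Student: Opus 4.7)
The plan is to show that any $u \in U_1(H,c)$ commutes with every generator $[h]$ of $U(H,c)$; since these generators generate the group, this suffices to conclude that $u$ is central.

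The first step is to extend the defining conjugation relation to inverse generators: for $g, h \in c$, the element $g^{-1}hg$ lies in $c$ because $c$ is a union of $H$-conjugacy classes and $g \in c \subseteq H$, so $[g^{-1}hg]$ makes sense as a generator of $U(H,c)$; applying the defining relation $[g][k][g]^{-1}=[gkg^{-1}]$ with $k = g^{-1}hg$ yields $[h] = [g][g^{-1}hg][g]^{-1}$, whence $[g]^{-1}[h][g] = [g^{-1}hg]$. In particular, for any $\epsilon \in \{+1,-1\}$ and any $g, h \in c$, one has $[g]^{\epsilon}[h][g]^{-\epsilon} = [g^{\epsilon} h g^{-\epsilon}]$, and the right-hand side is again a legitimate generator.

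Now fix $u \in U_1(H,c)$ and a generator $[h]$ with $h \in c$. Write $u$ as a word in the generators and their inverses: $u = [g_1]^{\epsilon_1}\cdots[g_n]^{\epsilon_n}$. Iterating the identity from the previous step from the innermost layer outward gives
\[
  u[h]u^{-1}
  = [g_1]^{\epsilon_1}\cdots[g_n]^{\epsilon_n}[h][g_n]^{-\epsilon_n}\cdots[g_1]^{-\epsilon_1}
  = [g_1^{\epsilon_1}\cdots g_n^{\epsilon_n} \cdot h \cdot g_n^{-\epsilon_n}\cdots g_1^{-\epsilon_1}],
\]
since each successive conjugation by $[g_i]^{\epsilon_i}$ replaces an existing generator $[k]$ by $[g_i^{\epsilon_i} k g_i^{-\epsilon_i}]$, and all intermediate elements stay in $c$ because $c$ is $H$-conjugation-invariant. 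The element inside the brackets on the right is $\pi(u)\,h\,\pi(u)^{-1}$, which equals $h$ precisely because $u \in U_1(H,c) = \ker(\pi)$. Hence $u[h]u^{-1} = [h]$, so $u$ commutes with every generator and therefore with every element of $U(H,c)$.

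There is no real obstacle here: the only thing one must be careful about is checking that every bracketed expression appearing along the way is an honest generator, i.e.\ that its argument lies in $c$. This is guaranteed at every step by the $H$-conjugation-invariance of $c$, together with the fact that the prefixes $g_i^{\epsilon_i}\cdots g_n^{\epsilon_n}$ belong to $H$.
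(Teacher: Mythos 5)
Your argument is correct and follows essentially the same route as the paper's proof: decompose $u$ as a word $[g_1]^{\epsilon_1}\cdots[g_n]^{\epsilon_n}$ in the generators, iterate the conjugation relation $[g]^{\epsilon}[h][g]^{-\epsilon}=[g^{\epsilon}hg^{-\epsilon}]$ (extended to inverses, exactly as the paper implicitly does), and conclude from $\pi(u)=1$ that $u[h]u^{-1}=[h]$. The only cosmetic difference is that the paper moves $[h]$ leftward through the word rather than conjugating, and you spell out the check that all intermediate arguments remain in $c$ -- a point the paper leaves tacit.
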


\begin{proof}
  Let $x$ be an element of $U_1(H,c)$ decomposed as $[g_1]^{\varepsilon_1} \cdots [g_n]^{\varepsilon_n}$  with $g_1^{\varepsilon_1} \cdots g_n^{\varepsilon_n} = 1$ and $\varepsilon_i \in \{-1,1\}$.
  Now consider one of the generators $[h]$ of $U(H,c)$.
  We have:
  \begin{align*}
    x [h] & = [g_1]^{\varepsilon_1} \cdots [g_{n-1}]^{\varepsilon_{n-1}} [g_n]^{\varepsilon_n} [h] 
    \\ & = [g_1]^{\varepsilon_1} \cdots [g_{n-1}]^{\varepsilon_{n-1}} [h^{g_n^{\varepsilon_n}}] [g_n]^{\varepsilon_n}
    \\ & = [h^{g_1^{\varepsilon_1} \cdots g_n^{\varepsilon_n}}] [g_1]^{\varepsilon_1} \cdots [g_{n-1}]^{\varepsilon_{n-1}} [g_n]^{\varepsilon_n}
    \\ & = [h] x.
  \end{align*}
  Hence $x$ commutes with a generating set of elements of $U(H,c)$ and thus $x \in Z(U(H,c))$.
  This shows that $U_1(H,c)$ is a central subgroup of $U(H,c)$.
\end{proof}

It follows from \Cref{prop:liftinv-braidinv} and \Cref{prop:ker-pi-central} that $\Pi_{H,c}$ induces a morphism of monoids from the following submonoid of $\Comp(G)$:
  \[
    \Comp(H,c) =
    \left \lbrace
      m  \in \Comp(G)
      \verti
      \begin{matrix}
        \gen{m}\subseteq H \\
        c_H(m) \subseteq c
      \end{matrix}
    \right \rbrace
  \]
into the abelian group $U_1(H,c)$.
In fact, the abelian group $U_1(H,c)$ is the Grothendieck group of $\Comp(H,c)$, i.e. $\Pi_{H,c}$ is universal among morphisms from $\Comp(H,c)$ into groups.
This follows from \autocite[Theorem 7.5.1]{EVW2}.
In some sense, this means that the lifting invariant is the best possible group-valued multiplicative invariant for components.

The $(H,c)$-multidiscriminant defines a morphism from $\Comp(H,c)$ into the group $\Z^D$ of maps from $D$ to $\Z$.
By the universal property of the Grothendieck group, the $(H,c)$-multidiscriminant can be recovered from the $(H,c)$-lifting invariant.
Indeed, the formula:
  \[ [g] \mapsto (0, 0, \ldots, 0, 1, 0, \ldots, 0), \]
where the nonzero coefficient occurs at the coordinate corresponding to the conjugacy class of $g$, induces a morphism $U(H,c) \to Z^D$ whose restriction is the expected morphism $U_1(H,c) \rightarrow \Z^D$.
So the $(H,c)$-lifting invariant refines the $(H,c)$-multidiscriminant.

The $(G,G)$-lifting invariant of a component $x \in \Comp(G)$ only depends on the component of unmarked covers obtained by forgetting the marked points of covers in $x$.
This follows from the following proposition:

\begin{proposition}
  \label{prop:liftinv-conjinv}
  If $\gamma \in H$ and $x \in \Comp(H,c)$ then $\Pi_{H,c}(x) = \Pi_{H,c}(x^{\gamma})$.
\end{proposition}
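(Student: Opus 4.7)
The plan is to unfold the definitions and then apply the centrality of $U_1(H,c)$ established in \Cref{prop:ker-pi-central}. Let $\gbar = (g_1, \ldots, g_n) \in c^n$ be a product-one representative of $x$. Since $\pi\gbar = 1$, the element $\Pi_{H,c}(x) = [g_1]\cdots[g_n]$ lies in $U_1(H,c) = \ker(\pi)$.

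The first step is to treat the case $\gamma \in c$. Then $[\gamma]$ is a generator of $U(H,c)$, and using the defining relation $[\gamma][h][\gamma]^{-1} = [h^\gamma]$ termwise, one computes
\[
  [\gamma]\, \Pi_{H,c}(x)\, [\gamma]^{-1}
  = [\gamma][g_1][\gamma]^{-1} \cdots [\gamma][g_n][\gamma]^{-1}
  = [g_1^\gamma] \cdots [g_n^\gamma]
  = \Pi_{H,c}(x^\gamma).
\]
Since $\Pi_{H,c}(x) \in U_1(H,c)$ and $U_1(H,c)$ is central in $U(H,c)$ by \Cref{prop:ker-pi-central}, the left-hand side equals $\Pi_{H,c}(x)$. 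Hence $\Pi_{H,c}(x^\gamma) = \Pi_{H,c}(x)$ whenever $\gamma \in c$.

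The second step is to handle an arbitrary $\gamma \in H$. Since the conjugacy classes in $D$ together generate $H$, the set $c$ generates $H$ as a group (here one uses that $H$ is finite, so inverses of elements of $c$ are themselves products of elements of $c$). Write $\gamma = c_1 c_2 \cdots c_r$ with $c_i \in c$. Using the identity $x^{\alpha\beta} = (x^\beta)^\alpha$ (coming from the convention $g^h = hgh^{-1}$), an immediate induction on $r$ combined with the $\gamma \in c$ case yields $\Pi_{H,c}(x^\gamma) = \Pi_{H,c}(x)$.

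There is no real obstacle here: the content of the statement is packaged into the centrality of $U_1(H,c)$, and the only thing to verify carefully is that $c$ generates $H$, which holds by the standing hypothesis on $D$. The same argument in fact shows that conjugation of a tuple (not necessarily product-one) by an element of $c$ conjugates its lifting invariant by $[\gamma]$, and that this conjugation becomes trivial precisely when the lifting invariant lies in the central subgroup $U_1(H,c)$, i.e.\ exactly for product-one tuples.
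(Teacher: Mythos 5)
Your proof is correct and takes essentially the same route as the paper: factor $\gamma$ as a word in elements of $c$, use the defining relations of $U(H,c)$ to show that conjugation by the corresponding generators implements conjugation of the tuple, and conclude by the centrality of $U_1(H,c)$. The paper performs the whole computation in a single display rather than isolating the case $\gamma\in c$ and then inducting on the word length, but the substance is identical. One caveat: your closing parenthetical overstates the result. That conjugation by every $[\gamma]$ fixes elements of $U_1(H,c)$ is precisely the centrality statement; but the converse, that being fixed by all such conjugations forces membership in $U_1(H,c)$, is false in general, since $Z(U(H,c))$ can strictly contain $U_1(H,c)$. For instance, if $H$ is abelian and nontrivial, then the defining relations reduce to commutativity, so $U(H,c)$ is the free abelian group on $c$ and equals its own center, while $U_1(H,c)=\ker(\pi)$ is a proper subgroup.
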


\begin{proof}
  Since $c$ generates $H$, we can choose elements $\gamma_1,\ldots,\gamma_n \in c$ such that $\gamma_1 \cdots \gamma_n = \gamma$.
  In $U(H,c)$, we have:
  \begin{align*}
    [\gamma_1]\cdots[\gamma_n]\Pi_{H,c}(x) & = [\gamma_1]\cdots[\gamma_{n-1}] \Pi_{H,c}(x^{\gamma_n}) [\gamma_n]
    \\ & = \Pi_{H,c}(x^{\gamma_1 \cdots \gamma_n}) [\gamma_1]\cdots [\gamma_n]
    \\ & = \Pi_{H,c}(x^{\gamma}) [\gamma_1]\cdots [\gamma_n].
  \end{align*}
  By \Cref{prop:ker-pi-central}, the element $\Pi_{H,c}(x^{\gamma}) \in U_1(H,c)$ is central and thus we can cancel $[\gamma_1]\cdots [\gamma_n]$ in this equality.
  This concludes the proof.
\end{proof}

\subsubsection{The structure of the group $U(H,c)$.}
\label{subsubsn:uhc-product}
We give a quick description of the group $U(H,c)$.
The proofs for the facts stated here are contained in \autocite[Paragraph 2.1]{wood}.
The main result is that the group $U(H,c)$ is isomorphic to a fibered product:
  \[ U(H,c) \simeq S_c \underset{H^{\ab}}{\times} \Z^D \]
where $S_c$ (a \emph{reduced Schur cover} of $G$) is a finite group which fits in an exact sequence:
  \[ 1 \rightarrow H_2(H,c) \rightarrow S_c \rightarrow H \rightarrow 1 \]
for a specific quotient $H_2(H,c)$ of the second homology group $H_2(H,\Z)$ of $H$.
In particular, the central subgroup $U_1(H,c)$ is isomorphic to the following direct product:
  \[ H_2(H,c) \times \ker(\tilde \pi) \]
where the morphism $\tilde \pi : \Z^D \to H^{\ab}$ is defined in the following way: when $\gamma \in D$ is a conjugacy class, denote by $\tilde \gamma$ the (well-defined) image in $H^{\ab}$ of the elements of $\gamma \in D$; if $\psi \in \Z^D$, we then let:
  \[ \tilde\pi(\psi) = \prod_{\gamma \in D} {\tilde \gamma}^{\psi(\gamma)}. \]

\subsubsection{The lifting invariant distinguishes ``big'' components.}
The following result, which is proved in \autocite[Theorem 7.6.1]{EVW2} and \autocite[Theorem 3.1]{wood}, states that components are entirely determined by their $(H,c)$-lifting invariant as soon as each conjugacy class of $H$ in $c$ is represented enough times in their $(H,c)$-multidiscriminant.
This is a stronger version of a result known as the Conway-Parker theorem.

To state the result, we introduce some notation.
If $\psi \in \Z^D$, we denote by $\card{\psi}$ the value $\sum_{\gamma \in D} \psi(\gamma)$ and by $\min(\psi)$ the minimal value taken by $\psi(\gamma)$ for $\gamma \in D$.
Then:
\begin{theorem}
  \label{thm:conwayparker}
  There is a constant $M_{H,c} \in \N$ such that for all $\psi \in \Z^D$ satisfying $\min(\psi) \geq M_{H,c}$, the morphism $\Pi_{H,c}$ induces a bijection:
  \[
    \bigslant
    {
      \left \lbrace
        \gbar \in G^{\card{\psi}}
        \verti
        \begin{matrix}
          \gen{\gbar} = H \\
          c_H(\gbar) = c \\
          \mu_{H,c}(\gbar) = \psi
        \end{matrix}
      \right \rbrace
    }
    {
      \B_{\card{\psi}}
    }
    \overset{\sim}{\longrightarrow}
    \left \lbrace
      x \in U(H,c)
      \verti
      x \text{ has image } \psi \text{ in } \Z^D
    \right \rbrace
    .
  \]
\end{theorem}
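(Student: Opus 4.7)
The plan is to split the claim into a surjectivity and an injectivity statement for the map induced by $\Pi_{H,c}$, exploiting the fibered-product description $U(H,c) \simeq S_c \times_{H^{\mathrm{ab}}} \Z^D$ recalled in \Cref{subsubsn:uhc-product}. The image in $\Z^D$ of $\Pi_{H,c}(\gbar)$ is nothing but the multidiscriminant $\mu_{H,c}(\gbar)$ (by the refinement remark following \Cref{prop:liftinv-conjinv}), so the map is automatically well-defined from the left-hand set into the right-hand set; the content is that, once $\min(\psi) \geq M_{H,c}$, it becomes a bijection.

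For surjectivity, I would start from a target $u \in U(H,c)$ with image $\psi \in \Z^D$, and pick a "naive" product-one tuple $\gbar_0$ of multidiscriminant $\psi$, group $H$ and class set $c$ (which exists as soon as $\min(\psi)\geq 1$, since $D$ generates $H$). The discrepancy $v = u \cdot \Pi_{H,c}(\gbar_0)^{-1}$ lies in $U_1(H,c) \simeq H_2(H,c) \times \ker(\tilde\pi)$, and its $\ker(\tilde\pi)$-component is zero because $u$ and $\Pi_{H,c}(\gbar_0)$ project to the same element of $\Z^D$. So $v \in H_2(H,c)$, and it is enough to show that every such element can be realized as a ratio of lifting invariants of two tuples sharing the same multidiscriminant. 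Since $H_2(H,c)$ is generated by (the images of) commutators $[[g],[h]]$, this amounts to producing a local modification of $\gbar_0$ that inserts such a commutator without changing the multidiscriminant, which can be done using pairs like $(g,g^{-1},h,h^{-1})$ combined with braid moves; this only consumes a bounded extra supply of each class in $D$, absorbed in the definition of $M_{H,c}$.

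Injectivity is the technical core. Given $\gbar_1,\gbar_2$ with the same group, class set, multidiscriminant $\psi$ and lifting invariant, I would run a stabilize-and-cancel argument: build a fixed auxiliary tuple $\gbar_*$ so that both $\gbar_1\gbar_*$ and $\gbar_2\gbar_*$ can be braided into a common canonical form depending only on the lifting invariant, and then invoke a cancellation principle stating that concatenation by a fixed tuple is injective on braid orbits in the stable regime, in order to strip $\gbar_*$ off both sides. The canonical form is produced by repeatedly applying the rewriting rule $[g][h]=[h^g][g]$ (which is exactly the elementary braid move) to sort the letters of the tuple into a fixed order, and then absorbing any residual $H_2(H,c)$-ambiguity by moving commutator pairs through the tuple using \Cref{propitem:braid-conjugate}.

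The main obstacle is the quantitative combinatorial bookkeeping needed to control $M_{H,c}$. One must ensure that at every step of the rewriting one has enough spare copies of each class in $D$ to perform the moves simultaneously, and that neither the generated group $\gen{\gbar}$ nor the set of appearing classes $c_H(\gbar)$ shrinks along the way. This is the content of the Conway-Parker-type stabilization theorems of \autocite[Theorem 7.6.1]{EVW2} and \autocite[Theorem 3.1]{wood}, which pin down a single explicit $M_{H,c}$ making all of the above rewriting moves available uniformly for every $\psi$ with $\min(\psi) \geq M_{H,c}$.
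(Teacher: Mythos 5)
The paper does not prove \Cref{thm:conwayparker}; it is stated as a known result and the proof is, in its entirety, the citation to \autocite[Theorem 7.6.1]{EVW2} and \autocite[Theorem 3.1]{wood}. Your sketch ultimately also defers the substantive part (the uniform bound $M_{H,c}$, the stabilization, and the cancellation) to exactly those two references, so there is no conflict with the paper: you have given a heuristic narrative of the cited argument rather than an independent proof.

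Two points in the sketch are imprecise. In the surjectivity step, the target $u \in U(H,c)$ is only constrained to have image $\psi$ in $\Z^D$; nothing forces $\pi(u)=1$, since the theorem is about all of $U(H,c)$, not just $U_1(H,c)$. If you pick a product-one $\gbar_0$, then $\Pi_{H,c}(\gbar_0)\in U_1(H,c)$ and the ``discrepancy'' $v=u\,\Pi_{H,c}(\gbar_0)^{-1}$ need not lie in $U_1(H,c)$, so the decomposition $U_1(H,c)\simeq H_2(H,c)\times\ker(\tilde\pi)$ does not apply to it; you would need to choose $\gbar_0$ with $\pi\gbar_0=\pi(u)$, not $\pi\gbar_0=1$. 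In the injectivity step, the ``cancellation principle'' (that concatenation by a fixed auxiliary tuple is injective on braid orbits in the stable range) is not a lemma one can invoke cheaply: in the cited sources it is essentially an output of the same stabilization analysis that proves the theorem, so treating it as an available stepping stone is circular unless one takes the referenced machinery as a black box --- which, to your credit, you acknowledge doing.
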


\subsection{The lifting invariant and fields of definition of glued components}
\label{subsn:applications-liftinv}

In this subsection, we use \Cref{thm:conwayparker} to prove \Cref{thmitem:main-prod-Mbig-defK}.
The proof also makes use of \Cref{thm:cau}.

First, remark that we may choose a (rough) constant $M$ independent from $(H,c)$ that satisfies the conclusion of \Cref{thm:conwayparker}:
  \[
    M \eqdef \max_{(H,c)} M_{H,c},
  \]
where the maximum is taken over couples $(H,c)$ where $H$ is a subgroup of $G$ and $c$ is a conjugation-invariant subset of $H$ that generates $H$.
In what follows, the constant $M$ is fixed in this way.

\begin{definition}
  A tuple $\gbar$ of elements of $G$ is \emph{$M$-big} if every conjugacy class of $H = \gen{\gbar}$ that appears in $\gbar$ appears at least $M$ times, i.e. $\min \mu_H(\gbar) \geq M$.
  A component $x \in \Comp(G)$ is \emph{$M$-big} if its representing tuples are $M$-big.
\end{definition}

\Cref{thm:conwayparker} implies that $M$-big components are determined by their lifting invariant.
Note that if $x \in \Comp(G)$ and $k \geq M$, the component $x^k$ is always $M$-big.
We now prove \Cref{thm:mbig-prod}, which is \Cref{thmitem:main-prod-Mbig-defK}:

\begin{theorem}
  \label{thm:mbig-prod}
  Let $x_1, \ldots, x_n \in \Comp(G)$ be components such that $x_1 \cdots x_n$ is $M$-big.
  Then:
  \begin{thmenum}
    \item
      \label{thmitem:irred-niels-mbig}
      The set $\nid(x_1,\ldots,x_n)$ contains only the component $x_1 \cdots x_n$.
    \item
      For all automorphisms $\sigma \in \Gamma_K$, we have $\sigma.(x_1 \cdots x_n) = (\sigma.x_1)\cdots(\sigma.x_n)$.
    \item
      \label{thmitem:prod-Mbig-defK}
      If $x_1, \ldots, x_n$ are defined over $K$, then $x_1 \cdots x_n$ is defined over $K$.
  \end{thmenum}
\end{theorem}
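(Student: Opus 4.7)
The plan is to prove (i) directly via the Conway--Parker theorem \Cref{thm:conwayparker}, then deduce (ii) by combining (i) with Cau's theorem \Cref{thm:cau}, and finally obtain (iii) as an immediate consequence of (ii). The core observation is that, for an $M$-big component, the triple (monodromy group, multidiscriminant, lifting invariant) is a complete invariant, and each of these three data is stable under the conjugation operation $(x_1,\ldots,x_n) \mapsto \prod_i x_i^{\gamma_i}$.

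\smallskip
For \Cref{thmitem:irred-niels-mbig}, set $H = \gen{x_1\cdots x_n}$ and $c = c_H(x_1\cdots x_n)$. Pick any $y = \prod_i x_i^{\gamma_i} \in \nid(x_1,\ldots,x_n)$; by definition of $\nid$, $\gen{y} = H$. Since each $\gamma_i \in H$ acts on $H$ by an inner automorphism, conjugation preserves every conjugacy class of $H$, so $\mu_H(x_i^{\gamma_i}) = \mu_H(x_i)$, and summing over $i$ gives $\mu_H(y) = \mu_H(x_1\cdots x_n)$; in particular $c_H(y) = c$. By the multiplicativity of $\Pi_{H,c}$ on $\Comp(H,c)$ and by the conjugation invariance established in \Cref{prop:liftinv-conjinv}:
\[
  \Pi_{H,c}(y) \;=\; \prod_i \Pi_{H,c}\!\left(x_i^{\gamma_i}\right) \;=\; \prod_i \Pi_{H,c}(x_i) \;=\; \Pi_{H,c}(x_1 \cdots x_n).
\]
Because $x_1\cdots x_n$ is $M$-big, $\min \mu_H(x_1\cdots x_n) \geq M \geq M_{H,c}$, so \Cref{thm:conwayparker} applies and forces $y = x_1\cdots x_n$.

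\smallskip
For (ii), fix $\sigma \in \Gamma_K$. The Galois action preserves monodromy groups, so $\gen{\sigma.x_i} = \gen{x_i}$ and hence $\gen{(\sigma.x_1)\cdots(\sigma.x_n)} = H$. By \Cref{coritem:galact-multidisc}, the multidiscriminant of $\sigma.(x_1\cdots x_n)$ is obtained from that of $x_1\cdots x_n$ by precomposition with the bijection $p_\sigma$ on conjugacy classes; in particular its minimum value is unchanged, so $\sigma.(x_1\cdots x_n)$, and therefore also $(\sigma.x_1)\cdots(\sigma.x_n)$, is still $M$-big. Applying Cau's theorem \Cref{thm:cau} gives
\[
  \sigma.(x_1\cdots x_n) \;\in\; \sigma.\nid(x_1,\ldots,x_n) \;=\; \nid(\sigma.x_1,\ldots,\sigma.x_n),
\]
and invoking (i) for the tuple $(\sigma.x_1,\ldots,\sigma.x_n)$ shows this right-hand side is the singleton $\{(\sigma.x_1)\cdots(\sigma.x_n)\}$, yielding the claimed equality. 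Part \Cref{thmitem:prod-Mbig-defK} is then immediate: if each $x_i$ satisfies $\sigma.x_i = x_i$, then (ii) gives $\sigma.(x_1\cdots x_n) = x_1\cdots x_n$.

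\smallskip
The only real care is bookkeeping: one must verify that the pair $(H,c)$ is the same for $y$ and for $x_1\cdots x_n$ so that \Cref{thm:conwayparker} may be applied simultaneously to both, and that $M$-bigness is transported correctly by the $\Gamma_K$-action. Neither is a genuine obstacle --- both follow formally from the conjugation-invariance of conjugacy classes and from the fact that $\chi(\sigma) \in \hat\Z^\times$ acts on conjugacy classes of $H$ by a bijection.
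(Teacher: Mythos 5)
Your proof is correct and follows essentially the same route as the paper: for (i) you use conjugation-invariance (\Cref{prop:liftinv-conjinv}) and multiplicativity of $\Pi_{H,c}$ to see that all elements of $\nid(x_1,\ldots,x_n)$ share monodromy group, multidiscriminant, and lifting invariant, then conclude by \Cref{thm:conwayparker}; (ii) and (iii) then follow via \Cref{thm:cau} exactly as in \Cref{thm:galois-permuting}. The only difference is that you spell out the (true, and needed) bookkeeping that $M$-bigness and the pair $(H,c)$ are preserved under conjugation and under the $\Gamma_K$-action, which the paper leaves implicit.
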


\begin{proof}
  Let $H = \gen{x_1 \cdots x_n}$ and $c = c_H(x_1 \cdots x_n)$.
  It follows from \Cref{prop:liftinv-conjinv} and from the multiplicativity of $\Pi_{H,c}$ that all elements of $\nid(x_1,\ldots,x_n)$ have the same $(H,c)$-lifting invariant.
  Moreover they are all $M$-big and have monodromy group $H$.
  By \Cref{thm:conwayparker}, they must be equal to each other.
  This proves point (i).
  Points (ii) and (iii) follow from point (i) and from \Cref{thm:cau}, like in the proof of \Cref{thm:galois-permuting}.
\end{proof}

\Cref{thmitem:prod-Mbig-defK} coupled with \Cref{cor:power-defK} imply that if $x_1,\ldots,x_n$ are defined over $K$, and if $k \geq M$, then $(x_1 \cdots x_n)^k$ is defined over $K$.

\subsection{The Galois action on lifting invariants}
\label{subsn:galact-liftinv}

Let $H$ be a subgroup of $G$, $c$ be a $K$-rational conjugation-invariant subset of $H$ which generates $H$, and $D$ be the set of conjugacy classes of $H$ contained in $c$.
In this subsection, we define a Galois action of $\Gamma_K$ on the set $U(H,c)$.
\Cref{prop:galois-liftinv} implies that this action effectively describes the effect on lifting invariants of the Galois action on $\Comp(G)$.
This generalizes the branch cycle lemma (\Cref{coritem:galact-multidisc}).
Moreover, in \Cref{thm:galois-gpact-liftinv}, we show that the Galois action on lifting invariants of elements of $\Comp(G)$ is compatible with multiplication.

Consider a Galois automorphism $\sigma \in \Gamma_K$.
Since $c$ is a $K$-rational subset, the $\chi(\sigma)$-th power operation defines a map $p_{\sigma} : D \rightarrow D$.

If $\gamma \in D$, choose an arbitrary element $g_{\gamma}$ of $\gamma$ and denote by $\hat{g_{\gamma}}$ (resp. $\hat{(g_{\gamma})^{\chi(\sigma^{-1})}}$) the projection on $S_c$ of the element $[g_{\gamma}] \in U(H,c)$ (resp. $\left [ (g_{\gamma})^{\chi(\sigma^{-1})} \right ] \in U(H,c)$).
Define the following element of $S_c$, which can be checked to be independent from the choice of $g_{\gamma}$:
  \[ w(\gamma, \sigma) \eqdef {\hat {g_\gamma}}^{-\chi(\sigma^{-1})} \widehat {(g_\gamma)^{\chi(\sigma^{-1})}}. \]
Importantly, the element $w(\gamma, \sigma)$ is central in $S_c$ (its image in $H$ is $(g_\gamma)^{-\chi(\sigma^{-1})} (g_\gamma)^{\chi(\sigma^{-1})} = 1$, and thus \Cref{prop:ker-pi-central} applies).

Consider an element $v \in U(H,c)$, decomposed as $(h,\psi)$ via the isomorphism $U(H,c) \simeq S_c \underset{H^{\ab}}{\times} \Z^D$ (cf. \Cref{subsubsn:uhc-product}).
We let:
\[
  \sigma.v
  = \left (
    \,\,
    h^{\chi(\sigma^{-1})}
    \prod_{\gamma \in D}
      w(\gamma, \sigma)^{\psi(c)}
    \hspace{.5cm},\hspace{.5cm}
    \psi \circ p_{\sigma}
    \,\,
  \right ).
\]
This formula is shown to define an action of $\Gamma_K$ on the set $U(H,c)$.
This construction is taken from \autocite[Paragraph 4.1]{wood}, and the following proposition follows from \autocite[Paragraph 6.1]{wood}:

\begin{proposition}
  \label{prop:galois-liftinv}
  Let $x \in \Comp(G)$ with $\gen{x} \subseteq H$ and $c_H(x) \subseteq c$.
  Then:
    \[
      \Pi_{H,c}(\sigma.x) = \sigma.\Pi_{H,c}(x).
    \]
\end{proposition}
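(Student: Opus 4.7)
The plan is to compute $\Pi_{H,c}(\sigma.x)$ directly from a representing tuple via the branch cycle lemma, and then match the result to $\sigma.\Pi_{H,c}(x)$ by checking the equality coordinate-by-coordinate under the fibered product decomposition $U(H,c) \simeq S_c \times_{H^{\ab}} \Z^D$ from \Cref{subsubsn:uhc-product}.

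First I would fix a representative $\gbar = (g_1, \ldots, g_n) \in c^n$ of $x$, and let $\gamma_i \in D$ be the conjugacy class of $g_i$. By the branch cycle lemma (\Cref{lem:branch-cycle-lemma}), $\sigma.x$ admits a representing tuple $(h_1, \ldots, h_n)$ in which each $h_i$ is $H$-conjugate to $g_{\sigma^{-1}(i)}^{\chi(\sigma)^{-1}}$, so $h_i$ lies in the class $p_\sigma(\gamma_{\sigma^{-1}(i)})$; by \Cref{prop:liftinv-braidinv} the value $\Pi_{H,c}(\sigma.x) = [h_1] \cdots [h_n]$ is independent of the choice of this tuple. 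On the $\Z^D$-coordinate, the projection of $\Pi_{H,c}(\sigma.x)$ is $\mu_{H,c}(\sigma.x) = \mu_{H,c}(x) \circ p_\sigma$ by \Cref{coritem:galact-multidisc}, which agrees with the second coordinate of $\sigma.\Pi_{H,c}(x)$ by definition of the $\Gamma_K$-action.

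The bulk of the work lies in the $S_c$-coordinate. Writing $\hat{g}$ for the image of $[g]$ in $S_c$, the goal is to establish
\[
  \hat{h_1} \cdots \hat{h_n}
  = (\hat{g_1} \cdots \hat{g_n})^{\chi(\sigma^{-1})} \prod_{\gamma \in D} w(\gamma, \sigma)^{\mu_{H,c}(x)(\gamma)}
  \quad \text{in } S_c.
\]
I would proceed in four steps: (i) apply the defining identity $\widehat{g^{\chi(\sigma^{-1})}} = \hat{g}^{\chi(\sigma^{-1})} \cdot w(\gamma, \sigma)$ (with $\gamma$ the class of $g$) to each factor $\widehat{g_{\sigma^{-1}(i)}^{\chi(\sigma^{-1})}}$; (ii) absorb the $H$-conjugation between $h_i$ and $g_{\sigma^{-1}(i)}^{\chi(\sigma^{-1})}$, which contributes only central corrections in $S_c$ because the ambiguity in lifting conjugators lies in the central kernel $H_2(H,c)$; (iii) pull all $w(\gamma, \sigma)$ factors outside the product using their centrality in $S_c$; (iv) use the product-one condition $g_1 \cdots g_n = 1$ together with braid-type rearrangements coming from the defining relations of $U(H,c)$ to identify the remaining permuted product $\hat{g_{\sigma^{-1}(1)}}^{\chi(\sigma^{-1})} \cdots \hat{g_{\sigma^{-1}(n)}}^{\chi(\sigma^{-1})}$ with $(\hat{g_1} \cdots \hat{g_n})^{\chi(\sigma^{-1})}$.

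The main obstacle is step (iv), where $S_c$ is nonabelian in general, so permuting the factors is nontrivial; the key is that the discrepancy between the two orderings is a product of commutators whose central parts are precisely absorbed by the $w(\gamma, \sigma)$ already collected in step (iii). This technical bookkeeping is the content of \autocite[Paragraph 6.1]{wood}, and my plan is to transcribe that argument in the present notation, relying throughout on the centrality of $U_1(H,c)$ (\Cref{prop:ker-pi-central}) and on the conjugation invariance of $\Pi_{H,c}$ (\Cref{prop:liftinv-conjinv}) to justify the various rearrangements.
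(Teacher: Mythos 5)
The paper gives no proof of this proposition; it simply states that it ``follows from [Wood, Paragraph~6.1],'' so your plan of reducing to Wood's argument matches the paper's approach exactly. Your outline of the fibered-product decomposition, the $\Z^D$-coordinate via \Cref{coritem:galact-multidisc}, and the role of \Cref{prop:liftinv-braidinv}, \Cref{prop:liftinv-conjinv} and the centrality of the $w(\gamma,\sigma)$'s are all the right ingredients.

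Two spots in your sketch misdescribe what the actual mechanism is, and are worth flagging before you try to carry out the transcription. First, in step (ii) you say the $H$-conjugation ``contributes only central corrections in $S_c$ because the ambiguity in lifting conjugators lies in the central kernel.'' That reasoning addresses well-definedness of the lift $\hat{\lambda_i}$, not the effect of the conjugation itself: conjugation by $\hat{\lambda_i}$ is a genuine (non-central) inner automorphism of $S_c$, so if the conjugators $\lambda_i$ differ from index to index, the factors $\hat{\lambda_i}^{-1}\hat{\lambda_{i+1}}$ do not cancel in $\hat{h_1}\cdots\hat{h_n}$. What one actually needs is the stronger form of the branch cycle lemma giving a \emph{single} conjugator $\lambda$ for the whole tuple; then the entire conjugation is absorbed at once by \Cref{prop:liftinv-conjinv}. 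Second, your step (iv) is where the real difficulty sits, and the stated strategy does not quite fit: after step (iii) has already extracted \emph{all} the $w(\gamma,\sigma)$ factors, there is nothing left for the ``central parts of commutators'' to be absorbed into. What remains is the bare identity $\prod_i \hat{g_{\sigma^{-1}(i)}}^{\chi(\sigma^{-1})} = \bigl(\prod_i \hat{g_i}\bigr)^{\chi(\sigma^{-1})}$ in $S_c$, and this is \emph{not} a consequence of general braid rearrangements or of the centrality of $\prod\hat{g_i}$ alone (it already fails for the unpermuted $k$-th-power tuple in, say, the Heisenberg group). It holds only because the permutation $\sigma$ of the branch points and the exponent $\chi(\sigma^{-1})$ are tied together by the Galois action; making that precise is exactly the content of Wood's computation, and it cannot be skipped.
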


By projection on $\Z^D$, \Cref{prop:galois-liftinv} gives back the branch cycle lemma (\Cref{coritem:galact-multidisc}).
A consequence of \Cref{prop:galois-liftinv} is the following necessary condition, which refines \Cref{coritem:bcl-rational-multidisc}:
\begin{corollary}
  \label{cor:liftinv-defK}
  Let $x \in \Comp(G)$ with $\gen{x} \subseteq H$ and $c_H(x) \subseteq c$.
  If the component $x$ is defined over $K$, then its $(H,c)$-lifting invariant is $\Gamma_K$-invariant.
\end{corollary}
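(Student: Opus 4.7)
The plan is to deduce this corollary immediately from \Cref{prop:galois-liftinv}. By \Cref{def:fielddefcomp}, the hypothesis that $x$ is defined over $K$ means that $\sigma.x = x$ for every $\sigma \in \Gamma_K$. Applying the map $\Pi_{H,c}$ to both sides of this equality yields $\Pi_{H,c}(\sigma.x) = \Pi_{H,c}(x)$. On the other hand, \Cref{prop:galois-liftinv} asserts that $\Pi_{H,c}(\sigma.x) = \sigma.\Pi_{H,c}(x)$. Combining these two identities gives $\sigma.\Pi_{H,c}(x) = \Pi_{H,c}(x)$ for every $\sigma \in \Gamma_K$, which is precisely the statement that the $(H,c)$-lifting invariant of $x$ is $\Gamma_K$-invariant for the action introduced earlier in \Cref{subsn:galact-liftinv}. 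The only minor point to keep in mind is that both sides of the equation are well-defined: the Galois action preserves the monodromy group, and $K$-rationality of $c$ ensures that $c_H(\sigma.x) \subseteq c$ whenever $c_H(x) \subseteq c$, so $\Pi_{H,c}(\sigma.x)$ makes sense.

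There is no real obstacle here, since the entire substance of the statement is contained in \Cref{prop:galois-liftinv}, whose proof is deferred to \autocite{wood}. The interest of the corollary is conceptual: it isolates $\Gamma_K$-invariance of the lifting invariant as a necessary condition for a component to be defined over $K$. This refines the weaker necessary condition \Cref{coritem:bcl-rational-multidisc} on $K$-rationality of the $(H,c)$-multidiscriminant, consistently with the fact noted after \Cref{prop:liftinv-conjinv} that the lifting invariant is a finer multiplicative invariant than the multidiscriminant. In particular, this is the condition that will appear in the sharper version of \Cref{prop:small-deg-all-defK} announced in the remark following its proof, and it underlies the observation made in the introduction that the lifting invariant cannot detect a potential counterexample to \Cref{qn:prod-defined}.
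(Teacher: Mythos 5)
Your proof is correct and follows exactly the route the paper intends: the corollary is stated as an immediate consequence of \Cref{prop:galois-liftinv}, and your argument (apply $\Pi_{H,c}$ to $\sigma.x = x$ and invoke the compatibility $\Pi_{H,c}(\sigma.x) = \sigma.\Pi_{H,c}(x)$) is the natural one-line deduction the paper leaves to the reader. The sanity check about well-definedness is a nice touch, though not strictly needed since the paper's setup already guarantees it.
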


We now show that a product of $\Gamma_K$-invariant elements of $U_1(H,c)$ is $\Gamma_K$-invariant, and thus the lifting invariant cannot be used to detect negative answers to \Cref{qn:prod-defined}.
This follows from the following fact:
\begin{theorem}
  \label{thm:galois-gpact-liftinv}
  The action of $\Gamma_K$ on $U_1(H,c)$ is compatible with multiplication.
\end{theorem}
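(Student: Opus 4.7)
The plan is to reduce everything to the explicit fibered product description $U(H,c) \simeq S_c \underset{H^{\ab}}{\times} \Z^D$ recalled in \Cref{subsubsn:uhc-product}. Under this isomorphism $U_1(H,c) = \ker(\pi)$ identifies with $H_2(H,c) \times \ker(\tilde\pi)$, and the key structural fact I will use is that $H_2(H,c)$ is central in $S_c$. This transfers from \Cref{prop:ker-pi-central} via the quotient map $U(H,c) \twoheadrightarrow S_c$: the image of the central subgroup $U_1(H,c)$ is precisely $H_2(H,c) \subseteq S_c$, hence lies in $Z(S_c)$. We also recall from the paragraph preceding the theorem that each element $w(\gamma,\sigma)$ lies in $H_2(H,c)$ and is therefore central in $S_c$.

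With this in hand, the verification becomes a direct computation. Given $v_1 = (h_1, \psi_1)$ and $v_2 = (h_2, \psi_2)$ in $U_1(H,c)$, the elements $h_1, h_2 \in H_2(H,c)$ commute in $S_c$, so multiplication is componentwise in the fibered product: $v_1 v_2 = (h_1 h_2, \psi_1 + \psi_2)$. Applying the formula defining the Galois action, the $\Z^D$-component is additive: $(\psi_1 + \psi_2) \circ p_\sigma = \psi_1 \circ p_\sigma + \psi_2 \circ p_\sigma$. The $S_c$-component of $\sigma.(v_1 v_2)$ is
\[
  (h_1 h_2)^{\chi(\sigma^{-1})} \prod_{\gamma \in D} w(\gamma,\sigma)^{(\psi_1+\psi_2)(\gamma)}.
\]
Since $h_1$, $h_2$, and all the $w(\gamma,\sigma)$ lie in $Z(S_c)$, the profinite power map $x \mapsto x^{\chi(\sigma^{-1})}$ is a homomorphism on the abelian subgroup they generate, so $(h_1 h_2)^{\chi(\sigma^{-1})} = h_1^{\chi(\sigma^{-1})} h_2^{\chi(\sigma^{-1})}$, and one may freely interleave all factors. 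This produces the $S_c$-component of $(\sigma.v_1)(\sigma.v_2)$, and the result follows.

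The computation has no real obstacle once the centrality statements are in place. The conceptual point worth emphasizing is why the argument is restricted to $U_1(H,c)$: multiplicativity of the Galois action comes down to the fact that the profinite power map $x \mapsto x^{\chi(\sigma^{-1})}$ behaves like a homomorphism on central elements of $S_c$. For general elements of $U(H,c)$, whose $S_c$-coordinate need not lie in $Z(S_c)$, this step fails, which is consistent with the fact that the action on the whole of $U(H,c)$ is not expected to be compatible with multiplication.
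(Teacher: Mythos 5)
Your proof is correct and takes essentially the same route as the paper: decompose $U_1(H,c) \simeq H_2(H,c) \times \ker(\tilde\pi)$ via the fibered product $U(H,c) \simeq S_c \underset{H^{\ab}}{\times} \Z^D$, compute componentwise, and use centrality/commutativity in $S_c$ to split the $\chi(\sigma^{-1})$-power and interleave the $w(\gamma,\sigma)$ factors. Your explicit verification that $H_2(H,c)$ is central in $S_c$ (by pushing \Cref{prop:ker-pi-central} through the surjection onto $S_c$) just makes precise what the paper uses implicitly when it invokes that $H_2(H,c)$ is abelian and that the $w(\gamma,\sigma)$ are central.
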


\begin{proof}
  Let $v,v' \in U_1(H,c)$, and decompose them as $v = (h,\psi)$, $v' = (h',\psi')$ with $h,h' \in H_2(H,c)$ and $\psi,\psi' \in \ker(\tilde\pi)$.
  We have $vv' = (hh', \psi + \psi')$.
  Let $\sigma \in \Gamma_K$.
  With notation as above, we have:
  \begin{align*}
    \sigma.(vv') &
    = \left (
      \,\,
      (hh')^{\chi(\sigma^{-1})}
      \prod_{\gamma \in D}
        w(\gamma, \sigma)^{(\psi+\psi')(c)}
        \hspace{.5cm},\hspace{.5cm}
      (\psi + \psi') \circ p_{\sigma}
      \,\,
    \right ) \\ &
    = \left (
      \,\,
      h^{\chi(\sigma^{-1})}
      (h')^{\chi(\sigma^{-1})}
      \prod_{\gamma \in D}
        w(\gamma, \sigma)^{\psi(c)}
        w(\gamma, \sigma)^{\psi'(c)}
        \hspace{.5cm},\hspace{.5cm}
      \psi \circ p_{\sigma} + \psi' \circ p_{\sigma}
      \,\,
    \right ) \\
    & = \left(
      \,\,
      \Bigg(
        h^{\chi(\sigma^{-1})}
        \prod_{\gamma \in D}
        w(\gamma, \sigma)^{\psi(c)}
      \Bigg)
      \Bigg(
        (h')^{\chi(\sigma^{-1})}
        \prod_{\gamma \in D}
          w(\gamma, \sigma)^{\psi'(c)}
        \Bigg)
      \hspace{.5cm},\hspace{.5cm}
      \psi \circ p_{\sigma} + \psi' \circ p_{\sigma}
      \,\,
    \right) \\
    & = (\sigma.v) (\sigma.v').
  \end{align*}
\end{proof}

We have used repeatedly that $H_2(H,c)$ is abelian in the final computation: this proof does not apply to arbitrary elements of $U(H,c)$.
However, the same proof shows that $\sigma.(vv') = (\sigma.v)(\sigma.v')$ holds as soon as $v$ and $v'$ commute in $U(H,c)$.

\Cref{thm:galois-gpact-liftinv} implies positive answers to \Cref{qn:prod-defined} in situations where the lifting invariant is shown to characterize components.
For example, \Cref{thmitem:prod-Mbig-defK} can be deduced from \Cref{thm:conwayparker} by using \Cref{thm:galois-gpact-liftinv} instead of \Cref{thm:cau}.

\section{The patching approach}
\label{sn:patching}

In this section, we use patching results over complete valued fields to study the fields of definition of components obtained by gluing two components $x,y \in \Comp(G)$ defined over the number field $K$.
The main result is \Cref{thm:prod-conj-defK}, which is \Cref{thmitem:main-prod-conj-defK}.
We now give a sketch of the argument, which also serves as an outline of the section.

In \Cref{subsn:lindisj-hit}, we recall and use a version of Hilbert's irreducibility theorem (\Cref{thm:hilbirred}) to construct infinitely many field extensions $K_1, K_2, \ldots$ of $K$, pairwise linearly disjoint, over which the components $x$ and $y$ both have points (\Cref{lem:magic-lemma}).
For each $n \in \N$, denote by $f_n$ (resp. $g_n$) a $K_n((X))$-point of $x$ (resp. $y$) obtained from a $K_n$-point of $x$ (resp. $y$).
Note that $K_n((X))$ is a complete valued field for the $(X)$-adic valuation.

In \Cref{subsn:patch-glue}, we prove that for each $n \in \N$, the cover obtained by patching the $K_n((X))$-$G$-covers $f_n$ and $g_n$ is a $K_n((X))$-$G$-cover which lies in a component $m_n \in \nid(x,y)$ (\Cref{lem:patching-in-ni}).
In particular, the field of definition of the component $m_n$ is included in $\Qbar \cap K_n((X)) = K_n$.

Finally, we observe that at least two components $m_n, m_{n'}$ have to be equal because $\nid(x,y)$ is finite.
Such a component $m_n = m_{n'}$ has its field of definition included in $K_n \cap K_{n'} = K$.
In other words, we have found a component defined over $K$ in $\nid(x,y)$: this is precisely \Cref{thm:prod-conj-defK}.
The detailed proof is the focus of \Cref{subsn:proof-prodconj}.

Note that the results of this section rely crucially on the fact that number fields are \emph{Hilbertian}.

\subsection{Constructing covers with linearly disjoint fields of definition}
\label{subsn:lindisj-hit}

We will use the following form of Hilbert's irreducibility theorem, which is close to the statement in \autocite{bary}:

\begin{theorem}[Hilbert's irreducibility theorem]
  \label{thm:hilbirred}
  Let $L' \mid L$ be a finite extension of number fields and $p : X \rightarrow Y$ be a finite étale morphism from a variety $X$ over $L$ to an open subvariety $Y$ of $\mathbb{A}^n_L$.
  Assume $X_{L'}$ is irreducible.
  Then there exists an $L$-point $t \in Y(L)$ such that the $\Qbar$-points of $X$ that are mapped to $t$ by $p$ lie in a single $\Gal(\Qbar\mid L')$-orbit.
\end{theorem}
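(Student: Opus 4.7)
The strategy is to reduce the statement to the classical single-Galois-orbit form of Hilbert's irreducibility theorem over $L$, by base-changing $X$ to $L'$ and re-reading the resulting cover as an $L$-scheme. Recall the classical form I want to use: over a Hilbertian field $L$ (e.g., a number field), any finite étale morphism $q : X' \to Y$ with $X'$ an $L$-irreducible scheme and $Y$ an open subvariety of $\mathbb{A}^n_L$ admits some $t \in Y(L)$ whose geometric fibre $q^{-1}(t)(\Qbar)$ is a single $\Gal(\Qbar \mid L)$-orbit. The work is then to exhibit a cover $X' \to Y$ of this type whose $\Gal(\Qbar\mid L)$-orbits on fibres match exactly with the $\Gal(\Qbar\mid L')$-orbits on the fibres of the original $p$.

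I would take $X' \eqdef X \times_L L'$ and let $q : X' \to Y$ be the composite $X' \to Y_{L'} \to Y$, viewed as a morphism of $L$-schemes. Since $L'/L$ is finite and separable, $Y_{L'} \to Y$ is finite étale, hence so is $q$. The hypothesis that $X_{L'}$ is irreducible over $L'$ says that the tensor product $L(X) \otimes_L L' = L(X_{L'})$ is a field, so $X'$ is irreducible as an $L$-scheme as well. Classical HIT applied to $q$ then produces $t \in Y(L)$ such that $q^{-1}(t)(\Qbar)$ is a single $\Gal(\Qbar\mid L)$-orbit. To translate this into a statement about $p^{-1}(t)(\Qbar)$, I would use the natural $\Gal(\Qbar\mid L)$-equivariant identification
\[ q^{-1}(t)(\Qbar) \simeq p^{-1}(t)(\Qbar) \times \Hom_L(L', \Qbar), \]
coming from the functor of points, with the Galois group acting diagonally: by its usual action on $p^{-1}(t)(\Qbar)$ and by post-composition on $\Hom_L(L', \Qbar)$. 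Since $\Gal(\Qbar\mid L)$ acts transitively on $\Hom_L(L', \Qbar)$ with stabilizer $\Gal(\Qbar\mid L')$ at the distinguished embedding $L' \hookrightarrow \Qbar$, transitivity on the product is equivalent to transitivity of $\Gal(\Qbar\mid L')$ on $p^{-1}(t)(\Qbar)$, finishing the proof.

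The argument is almost entirely formal once the classical form of HIT is taken as known, so I do not expect a serious obstacle. The one step that requires a moment of care is the Galois-equivariant identification of $q^{-1}(t)(\Qbar)$ with a product of fibres—a standard instance of the Weil-restriction viewpoint—and checking that $X_{L'}$ remains irreducible when regarded as an $L$-scheme, which follows from the fact that its function field is already a field.
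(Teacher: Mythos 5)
Your argument is correct, but it is not the route the paper takes: the paper gives no self-contained proof at all, simply invoking the classical case $L=L'$ and then citing \autocite[Corollary 12.2.3]{fieldar} (Hilbert subsets of a finite separable extension of a Hilbertian field contain points rational over the base field) to allow $L'$ to be larger than $L$. What you do instead is essentially reprove that cited ingredient: you view $X_{L'}$ as an $L$-scheme via the composite $X_{L'}\to Y_{L'}\to Y$ (a restriction-of-scalars trick on the fibres), note that irreducibility is insensitive to whether the structure morphism goes to $\Spec(L)$ or $\Spec(L')$, apply the single-orbit form of Hilbert irreducibility over $L$, and then untwist via the $\Gal(\Qbar\mid L)$-equivariant bijection $q^{-1}(t)(\Qbar)\simeq p^{-1}(t)(\Qbar)\times \Hom_L(L',\Qbar)$ with the diagonal action; since $\Gal(\Qbar\mid L)$ acts transitively on $\Hom_L(L',\Qbar)$ with stabilizer $\Gal(\Qbar\mid L')$, transitivity on the product forces $\Gal(\Qbar\mid L')$-transitivity on $p^{-1}(t)(\Qbar)$ (take $\gamma$ sending $(x,\iota_0)$ to $(x',\iota_0)$; it fixes $\iota_0$, hence lies in $\Gal(\Qbar\mid L')$). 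All the steps check out: $Y_{L'}\to Y$ is finite étale because $L'/L$ is finite separable, so $q$ is finite étale; $X_{L'}$ is reduced (étale over the smooth $Y$) and irreducible by hypothesis, so the classical theorem applies; and the fibre identification is the correct functor-of-points computation. The trade-off is that the paper's proof is shorter but leans on a reference for exactly the nonobvious part (passing from $L$ to $L'$), whereas yours is self-contained modulo only the standard single-orbit Hilbert irreducibility theorem over a number field, at the cost of the (routine) equivariance and group-theoretic bookkeeping, which you handle correctly.
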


When $L=L'$, this theorem is well-known.
The fact that $L'$ may be chosen larger than $L$ follows from \autocite[Corollary 12.2.3]{fieldar}.
Using \Cref{thm:hilbirred}, we prove the following lemma:

\begin{lemma}
  \label{lem:magic-lemma}
  Let $L' \mid L$ be a finite Galois extension of number fields and $S$ be an irreducible component of the Hurwitz scheme $\Hn^*(G,n)_L$ which is geometrically irreducible.
  Then there exists a field extension $\tilde L \mid L$ such that $\tilde L$ and $L'$ are linearly disjoint over $L$, and an $\tilde L$-point $f \in S(\tilde L)$.
\end{lemma}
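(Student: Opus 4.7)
The plan is to apply Hilbert's irreducibility theorem (\Cref{thm:hilbirred}) to the branch-point morphism restricted to $S$, and then read off linear disjointness from the coincidence of Galois orbits of a chosen fiber point.

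First I would consider the composition $p : S \hookrightarrow \Hn^*(G,n)_L \to \Conf_{n,L}$. Since $\Hn^*(G,n)$ is étale over $\Conf_n$, and $S$ is a (clopen in its fiber) irreducible component, the map $p$ is finite étale onto its image, which is an open subvariety $Y$ of $\Conf_{n,L}$. Moreover, because $S$ is assumed geometrically irreducible, the base change $S_{L'}$ is irreducible, so the hypotheses of \Cref{thm:hilbirred} are satisfied with the pair $(X,Y) = (S, Y)$ and the finite étale map $p$.

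Applying \Cref{thm:hilbirred}, I obtain an $L$-point $t \in Y(L)$ such that the set $p^{-1}(t)(\Qbar)$ of $\Qbar$-points of $S$ lying above $t$ forms a single $\Gal(\Qbar\mid L')$-orbit. Pick any $\bar L$-point $f$ in this fiber, and let $\tilde L \subseteq \Qbar$ be the residue field of the closed point of $S$ that is the image of $f$; this is a finite extension of $L$ and $f$ is naturally an $\tilde L$-point of $S$. Denote by $G_L = \Gal(\Qbar\mid L)$ and $G_{L'} = \Gal(\Qbar\mid L')$. Then $[\tilde L : L]$ equals the size of the $G_L$-orbit of $f$, while $[\tilde L \cdot L' : L']$ equals the size of the $G_{L'}$-orbit of $f$.

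The key point, which is now immediate, is that both orbits coincide with the full fiber $p^{-1}(t)(\Qbar)$: the inclusion $G_{L'}\cdot f \subseteq G_L \cdot f$ is automatic, and the reverse inclusion holds because the conclusion of \Cref{thm:hilbirred} forces $G_L \cdot f \subseteq p^{-1}(t)(\Qbar) = G_{L'}\cdot f$. Hence $[\tilde L : L] = [\tilde L \cdot L' : L']$, which is precisely the criterion for $\tilde L$ and $L'$ to be linearly disjoint over $L$, and $f \in S(\tilde L)$ by construction.

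The only technical point to double-check carefully is that $p$ really is finite étale onto an open subvariety of $\Conf_{n,L}$ to which \Cref{thm:hilbirred} applies (étaleness is inherited from $\Hn^*(G,n) \to \Conf_n$, and finiteness comes from the properness of the branch-point map on each component, or can be seen topologically from the fact that $\Hur^*(G,n) \to \Conf_n(\C)$ is a finite topological cover). No other obstacle is expected; the whole argument hinges on matching the hypothesis of the Hilbertian statement to our geometrically irreducible component.
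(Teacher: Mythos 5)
Your proposal is correct and follows essentially the same route as the paper: apply \Cref{thm:hilbirred} to the finite étale branch-point morphism on $S$ (using geometric irreducibility to get irreducibility of $S_{L'}$), pick $f$ in the single-orbit fiber with field of rationality $\tilde L$, and deduce $[\tilde L : L] = [\tilde L L' : L']$, hence linear disjointness. The only cosmetic difference is your hedge about the image of $p$ being an open subvariety (it is in fact all of $\Conf_{n,L}$, since $S$ is a component of a finite étale cover of an irreducible base), and your attribution of the Galois-stability of the fiber to the Hilbertian conclusion rather than to the fact that $t$ and $p$ are defined over $L$; neither affects the argument.
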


\begin{proof}
  Since $S$ is geometrically irreducible, its extension $S_{L'}$ is irreducible.
  The branch point morphism $S \rightarrow (\Conf_n)_{L}$ is finite étale.
  By Hilbert's irreducibility theorem (\Cref{thm:hilbirred}), there is a configuration $\tbar \in \Conf_n(L)$ such that the fiber $F \subset S(\Qbar)$ above $\tbar$ consists of a single Galois orbit.

  Let $f$ be any of the points in $F$ and $\tilde L$ be the smallest extension of $L$ over which the point $f$ is rational.
  Note that $\tilde L L'$ is the smallest extension of $L'$ over which the point $f$ is rational.
  The fiber $F$ is the $\Gal(\Qbar\mid L')$-orbit of $f$, hence the degree of the extension $\tilde L L' \mid L'$ is the cardinality of $F$.
  The same argument shows that the degree of the extension $\tilde L \mid L$ is also equal to the cardinality of $F$.
  The equality $[\tilde L L':L'] = [\tilde L : L]$ implies that $L'$ and $\tilde L$ are linearly disjoint over $L$.
\end{proof}

\subsection{Relating patching and gluing}
\label{subsn:patch-glue}

Let $\mathcal{O}$ be a complete discrete valuation ring of characteristic zero and $L$ its fraction field, which is a complete valued field.
Since $\Qbar$ is algebraically closed and $\bar L$ contains $\Qbar$, extension of scalars induces a bijection between the irreducible components of $\Hn^*(G,n)_{\Qbar}$ and those of $\Hn^*(G,n)_{\bar L}$.
We denote by $\Phi_n$ this bijection.
This lets us do the following slight terminological abuse: we say that a component $x \in \Comp(G)$ has an $L$-rational point if its extension $\Phi_{\deg(x)}(x)$ to $\bar L$ has an $L$-rational point.

\begin{lemma}
  \label{lem:patching-in-ni}
  Let $x_1, x_2 \in \Comp(G)$ be components which have $L$-rational points.
  Then there is a component $y \in \nid(x_1, x_2)$ which has an $L$-rational point.
\end{lemma}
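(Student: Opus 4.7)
The plan is to apply the algebraic patching theory of Harbater \autocite{harbater} and Haran \autocite{haran}, which allows one to glue $L$-rational covers of $\mathbb{P}^1_L$ with disjoint branch loci into a single $L$-rational cover, and then to identify the braid orbit of the glued cover combinatorially.

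First, I would extract from the $L$-rational points $f_i \in x_i(L)$, for $i = 1, 2$, the associated $L$-$G$-covers $p_i : Y_i \to \mathbb{P}^1_L$ equipped with a marked $L$-point above $\infty$, branched at configurations $\tbar_i \in \Conf_{n_i}(L)$. By applying a common $L$-rational affine change of coordinate fixing $\infty$, I would arrange that the branch loci $\tbar_1$ and $\tbar_2$ lie in two disjoint closed discs $D_1, D_2$ of $\mathbb{A}^1_L$ with respect to the valuation on $L$. This preserves both the $L$-rationality of $f_i$ and the component of $\Hn^*(G, n_i)$ that contains it.

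Next, I would apply Haran's algebraic patching theorem to the data $(p_1, p_2, D_1, D_2)$. This produces an $L$-$G$-cover $p : Y \to \mathbb{P}^1_L$ equipped with a marked $L$-point above $\infty$, branched exactly at $\tbar_1 \sqcup \tbar_2$, whose restriction to a suitable rigid-analytic neighborhood of $D_i$ recovers $p_i$. By construction, $p$ corresponds to an $L$-rational point of $\Hn^*(G, n_1 + n_2)$, lying in some component $y \in \Comp(G)$.

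It remains to show $y \in \nid(x_1, x_2)$. After base change to $\bar L \supseteq \Qbar$ and passage to the topological setting via Riemann's existence theorem, I would choose a bouquet in $\mathbb{P}^1(\C) \setminus (\tbar_1 \sqcup \tbar_2)$ based at $\infty$ in which the first $n_1$ loops encircle $\tbar_1$ and the remaining $n_2$ encircle $\tbar_2$. The local compatibility of $p$ with the $p_i$ then yields a branch cycle description of $p$ of the form $(\gbar_1^{\gamma_1}, \gbar_2^{\gamma_2})$, where $\gbar_i$ represents $x_i$ and each $\gamma_i \in G$ arises from transporting the marking at $\infty$ along the corresponding bundle of bouquet loops. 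Since the monodromy group of the marked component of $p$ equals both $\gen{x_1, x_2} = \gen{x_1 x_2}$ (by the patching construction) and $\gen{x_1^{\gamma_1} x_2^{\gamma_2}}$ (from the branch cycle description), the $\gamma_i$ may be taken in $\gen{x_1 x_2}$, so $y = x_1^{\gamma_1} x_2^{\gamma_2}$ lies in $\nid(x_1, x_2)$.

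The main obstacle I expect is the combinatorial identification in the last step: reading off the branch cycle description of the patched cover from the patching data and verifying that the conjugating elements $\gamma_i$ do belong to $\gen{x_1 x_2}$ rather than merely to $G$. This requires tracking, at the level of paths, how a bouquet in the complement of $\tbar_1 \sqcup \tbar_2$ decomposes relative to the separate bouquets used to describe $p_1$ and $p_2$, and invoking \Cref{propitem:braid-conjugate} to absorb the remaining freedom in the bouquet choice.
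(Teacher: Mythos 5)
Your first two steps (reducing to the marked connected components, separating the branch loci, and invoking Haran's algebraic patching to produce an $L$-rational point of $\Hn^*(G,n_1+n_2)$ whose restrictions to the two rigid discs recover $p_1$ and $p_2$) follow the same route as the paper. The genuine gap is exactly the step you flag as "the main obstacle": converting the rigid-analytic statement "$p$ restricted to a neighbourhood of $D_i$ is isomorphic to $p_i$" into a branch cycle description $(\gbar_1^{\gamma_1},\gbar_2^{\gamma_2})$. The discs $D_1,D_2$ are defined by the non-archimedean valuation on $L$; after base change to $\bar L$ and transport to $\C$ via an abstract field embedding, this metric structure is lost, so there is no topological bouquet "adapted to the discs" along which you can read off the monodromy of $p$ as the monodromies of $p_1$ and $p_2$ up to conjugation. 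The paper fills precisely this hole by degenerating rather than embedding: the branch configuration $\tbar_1\cup\tbar_2$ admits a semistable model over $\mathcal{O}$ whose special fibre is a comb with two teeth, the patched cover extends to this model, one checks (via the arguments of \autocite[Proposition 2.3]{DebEms}, using that $f_{|D_i}$ extends to a cover with no branch points outside $D_i$, hence is trivial over the annulus $\partial D_i$) that the special fibre is unramified at the singular points of the comb, and its restrictions to the teeth are the $f_i$; this makes it a $\Delta$-admissible cover, and \autocite[Proposition 3.9]{Cau} (which encapsulates, via Wewers' compactification, the comparison between boundary admissible covers and braid orbits) then yields $y\in\ni(x_1,x_2)$. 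Membership in $\nid$ follows because the compound of the patching has monodromy group $\gen{G_1,G_2}=\gen{x_1x_2}$, as you also noted. Without this degeneration-plus-admissible-covers input (or an equivalent comparison theorem), your last paragraph asserts the conclusion rather than proving it.

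Two smaller points. First, a \emph{common} affine change of coordinates fixing $\infty$ cannot in general separate $\tbar_1$ and $\tbar_2$ into disjoint discs (they may overlap or interleave); one must move the two covers by \emph{separate} transformations, which is legitimate since the connected affine group preserves each geometrically irreducible component of $\Hn^*(G,n_i)$ -- this is in effect what \autocite[Lemma 4.2]{haran} provides, and it also uses the marked $L$-point to guarantee an unramified degree-one prime. Second, your final monodromy-group argument for placing the $\gamma_i$ in $\gen{x_1x_2}$ is fine once the branch cycle description is established, and matches how the paper upgrades $\ni$ to $\nid$; but it cannot substitute for the missing identification step itself.
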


\begin{proof}
  ~
  \begin{itemize}
    \item
      \textbf{Step 1: Setting things up}

      For $i=1,2$, let $r_i = \deg(x_i)$, $G_i = \gen{x_i}$ and fix an $L$-model $f_i \in \Hn^*(G, r_i)(L)$ of an $L$-rational point of $\Phi_{r_i}(x_i)$.
      The point $f_i$ corresponds to an $L$-$G$-cover with a marked $L$-point above $\infty$.
      In the cover $f_i$, keep only the geometrically irreducible component of the marked point, which is defined over $L$ since the marked point is $L$-rational.
      This turns $f_i$ into a geometrically irreducible $L$-$G_i$-cover with a marked $L$-point.
      The cover $f_i$ belongs to the component $x_i'$ of $\Hn^*(G_i, r_i)_L$ obtained by keeping only the component of the marked points in the covers of $x_i$, like in \Cref{subsubsn:marked-unmarked}.
      Without loss of generality, we may assume $G = \gen{G_1, G_2}$.

    \item
      \textbf{Step 2: Patching covers over $L$}

      We use the algebraic patching results of \autocite{haran}.
      First define:
      \[\begin{matrix}
          L\{z\}
          =
          \left \lbrace
            \sum_{i \geq 0} a_i z^i \in L[[z]]
            \verti
            a_i \underset{i \to \infty}{\to} 0
          \right \rbrace
          & \hspace{0.5cm} &
          Q_1 = \Frac(L\{z\}) \\
          L\{z^{-1}\}
          =
          \left \lbrace
            \sum_{i \geq 0} a_i z^{-i} \in L[[z^{-1}]]
            \verti
            a_i \underset{i \to \infty}{\to} 0
          \right \rbrace
          & &
          Q_2 = \Frac(L\{z^{-1}\}) \\
          L\{z, z^{-1}\}
          =
          \left \lbrace
            \sum_{i \in \Z} a_i z^i \in L[[z, z^{-1}]]
            \verti
            a_i \underset{i \to \pm\infty}{\to} 0
          \right \rbrace
          & &
          \hat Q = \Frac(L\{z, z^{-1}\}).
      \end{matrix}\]
      Let also $Q'_1 = Q_2$ and $Q'_2 = Q_1$.
      From the point of view of rigid analytic geometry, $Q_1$ (resp. $Q_2$, and $\hat Q$) is the algebra of analytic functions on the unit disk $D_1$ centered at $0$ (resp. a disk $D_2$ centered at $\infty$, and the annulus $D_1 \cap D_2$):

      \begin{figure}[H]
        \begin{center}
        \includegraphics[scale=0.5]{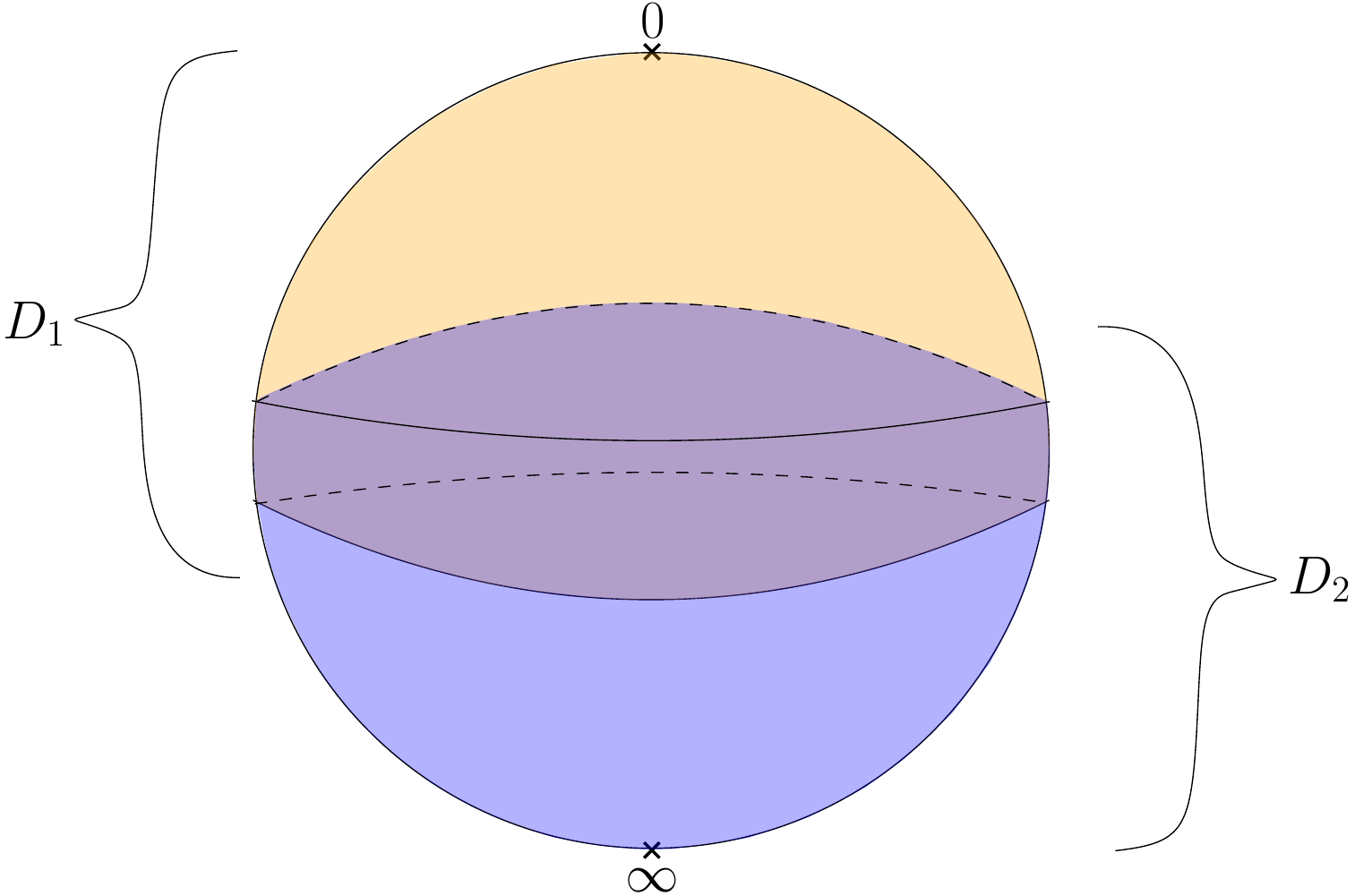}
        \caption{The rigid analytic projective line}
        \end{center}
      \end{figure}

      The marked points of the $G$-covers $f_1, f_2$ are $L$-points in an unramified fiber.
      Their existence ensures that the corresponding field extensions $F_1, F_2$ of $L(z)$ have an unramified prime of degree $1$.
      By \autocite[Lemma 4.2]{haran}, for $i = 1,2$, we can then replace $f_i$ by an isomorphic $L$-$G_i$-cover such that $F_i$ is included in $Q'_i$, and in particular the branch locus $\tbar_i \in \Conf_{r_i}(L)$ of $f_i$ is included in a disk strictly smaller than $D_i$.
      \autocite[Proposition 4.3]{haran} implies that $f_1$ and $f_2$ can be patched into a geometrically irreducible $L$-$G$-cover $f$ with an $L$-point.

    \item
      \textbf{Step 3: Restriction of the patched cover $f$ to disks}

      Denote by $F$ the field extension corresponding to $f$, i.e. the \emph{compound} of $F_1$ and $F_2$ in the terminology of \autocite{haran}, which is included in $\hat Q$.
      By \autocite[Lemma 3.6 (b)]{haran}, we have the equalities $F Q_i = F_i Q_i$ (for $i=1,2$) inside $\hat Q$.
      Moreover, the morphism $\Gal(F Q_i \mid Q_i) \rightarrow \Gal(F \mid L(z))$ corresponds to the inclusion $G_i \hookrightarrow G$.
      We sum this up by the following diagram:
      \[\begin{tikzcd}[ampersand replacement=\&]
        \& {\hat Q} \\
        \& {FQ_i = F_iQ_i} \\
        {F} \& {Q_i} \& {F_i} \\
        \& {L(z)}
        \arrow["{G_i}"', no head, from=4-2, to=3-3]
        \arrow["G", no head, from=4-2, to=3-1]
        \arrow[no head, from=4-2, to=3-2]
        \arrow["{G_i}", no head, from=3-2, to=2-2]
        \arrow[no head, from=3-1, to=2-2]
        \arrow[no head, from=3-3, to=2-2]
        \arrow[no head, from=2-2, to=1-2]
      \end{tikzcd}\]

      Geometrically, the equality $F Q_1 = F_1 Q_1$ means that the cover $f_1$ is isomorphic to $f$ as a rigid analytic cover when both are restricted to the unit disk $D_1$, and similarly for $f_2$ and $f$ in restriction to $D_2$.

      In consequence, the branch points of $f$ are given by the configuration $\tbar = \tbar_1 \cup \tbar_2$.
      Let $y$ be the component of $\Hn^*(G,r_1+r_2)_{\bar L}$ containing $f$ (seen as an $\bar L$-point).
      To show that the component $y$ fits, it remains to check that $\Phi_{r_1+r_2}^{-1}(y) \in \ni(x_1,x_2)$.

    \item
      \textbf{Step 4: Admissibility of the special fiber $\bar f$ of the patched cover}

      Since $\tbar_1$ and $\tbar_2$ are included in disks strictly smaller than $D_1, D_2$, each of the configurations $\tbar_1, \tbar_2$ maps to a single element $\bar{a_1}, \bar{a_2}$ modulo the maximal ideal of $\mathcal{O}$, with $\bar{a_1} \neq \bar{a_2}$.
      The projective line $\P^1_L$ marked by $\tbar = \tbar_1 \cup \tbar_2$ has a semistable model $\tilde P_{\tbar}$ over $\mathcal{O}$, whose special fiber $\bar P_{\tbar}$ is a ``comb'' with two teeth $T_1, T_2$, one for each coset $\bar{a_1}, \bar{a_2}$.
      For $i=1,2$, the points of the configuration $\tbar_i$ extend to sections which specialize to $r_i$ distinct nonsingular points of the tooth $T_i$.
      \begin{figure}[H]
        \begin{center}
        \includegraphics[scale=0.6]{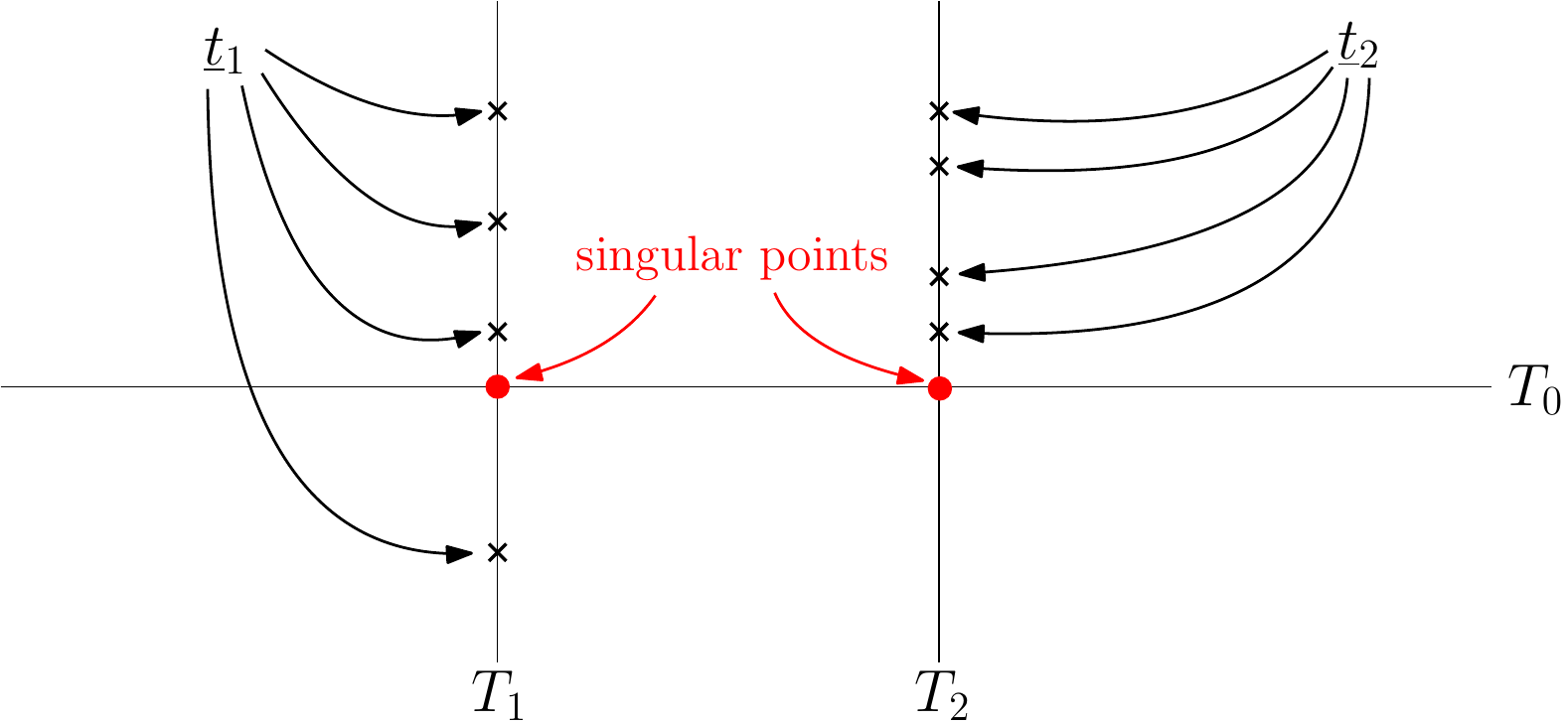}
        \caption{The comb with two teeth $\bar P_{\tbar}$}
        \end{center}
      \end{figure}

      The cover $f$, branched at $\tbar$, extends to a cover $\tilde f$ of the semistable model $\tilde P_{\tbar}$, which is ramified along the sections of the points in $\tbar$.
      The special fiber $\bar f$ of $\tilde f$ is a cover of the comb which lies on the ``boundary'' of the component $y$ in the sense of the Wewers' compactification, see \autocite[Paragraph 1.2]{DebEms} or \autocite[Paragraph 3.3.1]{Cau}.

      To prove that the special fiber $\bar f$ of $\tilde f$ is unramified at the singular points of the comb, we follow \autocite[Paragraph 2.3]{DebEms} closely.
      The restriction of $f$ to $D_1$ extends to a cover (namely, $f_1$) of the rigid projective line which has no branch points outside $D_1$.
      By the arguments of \autocite[Proposition 2.3, (ii)$\Rightarrow$(i)$\Rightarrow$(iii)]{DebEms}, the restricted cover $f_{|D_1}$ is trivial above the annulus $\partial D_1$.
      The same holds for $f_{|D_2}$.
      Hence, $\bar{f}$ is unramified at the singular points of the comb.

      We conclude that $\bar{f}$ is a cover of the comb $\bar P_{\tbar}$ unramified at the singular points, whose restriction to the $i$-th tooth is isomorphic to the cover $f_i$ -- which belongs to the component $x'_i$.

    \item
      \textbf{Step 5: Conclusion}

      The conclusion of Step 4 implies that $\bar{f}$ is a $\Delta$-admissible cover in the sense of \autocite[Definition 3.7]{Cau}, where:
        \[ \Delta = \Big( G, (G_1, G_2), \left (x'_1, x'_2 \right ) \Big) \]
      is the degenerescence structure associated to $\left (x'_1, x'_2 \right )$.
      By \autocite[Proposition 3.9]{Cau}, the component of $f$ is a $\Delta$-component, which in our terminology means that $\Phi^{-1}_{r_1 + r_2}(y) \in \ni(x_1, x_2)$ as we noted in \Cref{subsn:cauthm}.
      This concludes the proof.
  \end{itemize}
\end{proof}

\subsection{Proof of the theorem}
\label{subsn:proof-prodconj}

\begin{theorem}
  \label{thm:prod-conj-defK}
  Let $x,y \in \Comp(G)$ be components defined over $K$.
  Then $\nid(x,y)$ contains a component defined over $K$.
\end{theorem}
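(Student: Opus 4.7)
The plan is to follow the roadmap laid out at the start of \Cref{sn:patching}, with \Cref{lem:magic-lemma} and \Cref{lem:patching-in-ni} as the two engines.

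First I would construct, by induction, an infinite sequence of finite extensions $K_1, K_2, \ldots$ of $K$ that are pairwise linearly disjoint over $K$ and over each of which both $x$ and $y$ possess a rational point. Given $K_1, \ldots, K_{n-1}$, let $L'_n$ be the Galois closure over $K$ of their compositum; applying \Cref{lem:magic-lemma} to a $K$-model of $x$ with the extension $L'_n \mid K$ yields a field $K_n^x$ linearly disjoint from $L'_n$ over $K$ where $x$ has a rational point, and a second application to $y$ with the extension $L'_n K_n^x \mid K$ yields $K_n^y$ linearly disjoint from $L'_n K_n^x$. The compositum $K_n = K_n^x K_n^y$ is then linearly disjoint from $L'_n$ (and a fortiori from each previous $K_i$) over $K$, and carries rational points of both $x$ and $y$.

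Next, the inclusion $K_n \hookrightarrow K_n((X))$ turns each such rational point into a $K_n((X))$-point, and $K_n((X))$ is the fraction field of the complete discrete valuation ring $K_n[[X]]$. By \Cref{lem:patching-in-ni}, there exists a component $m_n \in \nid(x,y)$ admitting a $K_n((X))$-rational point. The extension $K_n((X)) \mid K_n$ is regular (any Laurent series algebraic over $K_n$ must be constant, so $K_n((X)) \cap \overline{K_n} = K_n$), so the restriction $\Gal(\overline{K_n((X))} \mid K_n((X))) \twoheadrightarrow \Gal(\overline{K_n} \mid K_n)$ is surjective; having a rational point over $K_n((X))$ therefore forces $m_n$ to be stable under $\Gal(\Qbar \mid K_n)$, i.e.\ $m_n$ is defined over $K_n$.

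Finally, since $\nid(x,y)$ is finite, by pigeonhole there exist distinct indices $n, n'$ with $m_n = m_{n'}$. This common component is then defined over $K_n \cap K_{n'}$, which equals $K$ by linear disjointness, yielding the desired component of $\nid(x,y)$ defined over $K$.

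The bulk of the work is concentrated in \Cref{lem:patching-in-ni}, which rests on the algebraic patching machinery of Haran: beyond merely constructing the patched cover, one must check that the special fiber of its semistable model is an admissible cover of the two-tooth comb, so that Cau's characterization places the resulting component in $\ni(x,y)$ (and in $\nid(x,y)$ once the monodromy group is controlled by arranging $G = \langle G_1, G_2 \rangle$). A secondary subtlety is the descent from $K_n((X))$ to $K_n$: the regularity of the Laurent series extension is essential here, and is what distinguishes $K_n((X))$ from a general complete valued field in this argument.
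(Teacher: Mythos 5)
Your proposal is correct and follows essentially the same route as the paper: Hilbert irreducibility via \Cref{lem:magic-lemma} to build pairwise linearly disjoint extensions $K_n$ carrying rational points of both components, patching over the complete field $K_n((X))$ via \Cref{lem:patching-in-ni}, descent of the field of definition through $K_n((X)) \cap \Qbar = K_n$, and a pigeonhole argument using the finiteness of $\nid(x,y)$ together with $K_n \cap K_{n'} = K$. The only cosmetic difference is the bookkeeping of the inductive step: the paper applies \Cref{lem:magic-lemma} the second time over the base field $\tilde K_n$ with a Galois $L'$, whereas in your version (both applications over $K$, then taking the compositum $K_n^x K_n^y$) one should replace $L'_n K_n^x$ by its Galois closure before the second application so as to meet the lemma's hypothesis, which changes nothing.
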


\begin{proof}
  Let $r_1 = \deg(x)$, $r_2=\deg(y)$.
  Since the components $x,y$ are defined over $K$, we fix
  $\left \lbrace
  \begin{matrix}
    \text{a $K$-model } X \subseteq \Hn^*(G,r_1)_K \text{ of } x \\
    \text{a $K$-model } Y \subseteq \Hn^*(G,r_2)_K \text{ of } y
  \end{matrix}
  \right .$.
  Note that $X$ and $Y$ are geometrically irreducible.
  The proof consists of three steps:
  \begin{enumerate}
  \item
    First, we inductively construct two sequences of marked $G$-covers $(f_n)_{n \geq 1}$ and $(g_n)_{n \geq 1}$, as well as a sequence $(K_n)$ of field extensions of $K$ such that:
    \begin{itemize}
      \item
        $K_n$ is linearly disjoint with the Galois closure of $K_1 \cdots K_{n-1}$ over $K$.
      \item
        $f_n$ and $g_n$ are $K_n$-points of $X$ and $Y$ respectively.
    \end{itemize}
    
    For $f_1$ and $g_1$, choose arbitrary $\Qbar$-points of $X$ and $Y$ respectively, and let $K_1$ be the smallest extension of $K$ over which they are both rational.
    
    Assume we have constructed $K_1,\ldots,K_{n-1}$ and $f_1,g_1,\ldots,f_{n-1},g_{n-1}$.
    Let $L_n$ be the Galois closure of $K_1 \cdots K_{n-1}$ over $K$.

    Apply \Cref{lem:magic-lemma} with $L=K$, $L'=L_n$ and $S = X$.
    This yields a field extension $\tilde K_n$ of $K$ such that $\tilde K_n$ and $L_n$ are linearly disjoint over $K$, and a $\tilde K_n$-point $f_n$ of $X$.
    Let $L'_n$ be the Galois closure of $L_n \tilde K_n$.
    Apply once again \Cref{lem:magic-lemma} with  $L = \tilde K_n$, $L' = L'_n$ and $S = Y_{\tilde K_n}$.
    This yields a field extension $K_n$ of $\tilde K_n$ such that $K_n$ and $L'_n$ are linearly disjoint over $\tilde K_n$, and a $K_n$-point $g_n$ of $Y$.
    Finally, replace the $\tilde K_n$-point $f_n$ by $f_n$ seen as a $K_n$-point.

    The inclusions between the fields introduced above are summed up by the following diagram:

    \[\begin{tikzcd}[ampersand replacement=\&]
      \& {L'_n} \&\& {K_n} \\
      {L_n} \&\& {\tilde K_n} \\
      \& K
      \arrow[no head, from=2-3, to=1-2]
      \arrow[no head, from=2-1, to=1-2]
      \arrow[no head, from=2-3, to=1-4]
      \arrow[no head, from=3-2, to=2-1]
      \arrow[no head, from=3-2, to=2-3]
    \end{tikzcd}\]

    By construction, we have $f_n \in X(K_n)$ and $g_n \in Y(K_n)$.
    Now:
    \[
      K_n \cap L_n = K_n \cap \left ( L'_n \cap L_n \right ) = \left ( K_n \cap  L'_n \right ) \cap L_n = \tilde K_n \cap L_n = K.
    \]
    Since $L_n \mid K$ is Galois, this is enough to conclude that $K_n$ and $L_n$ are linearly disjoint over $K$.
    We have verified that the constructed sequences $(f_n)$, $(g_n)$, $(K_n)$ satisfy the desired properties.
  
  \item
    Next, we show that for each $n$ there is a component $z_n \in \nid(x,y)$ defined over $K_n$.

    Denote by $\tilde f_n$ (resp. $\tilde g_n$) the $K_n((X))$-point of $X$ (resp. $Y$) obtained by seeing $f_n \in X(K_n)$ (resp. $g_n \in Y(K_n)$) as a $K_n((X))$-point.
    Since $F = K_n((X))$ is a complete valued field, \Cref{lem:patching-in-ni} implies that there is a component $z_n \in \nid(x,y)$ which has a $K_n((X))$-rational point.
    In particular, the field of definition of $z_n$ is included in $K_n((X)) \cap \Qbar = K_n$.

    We have established that there is a component $z_n \in \nid(x,y)$ defined over $K_n$ for all $n$.

  \item
    Finally, since $\nid(x, y)$ is finite, there must be distinct integers $n,n'$ such that $z_n = z_{n'}$.
    Fix such $n,n'$.
    Then, the field of definition of $z_n$ is included in $K_n \cap K_{n'} = K$.

    This concludes the proof: there is a component $z_n \in \nid(x,y)$ defined over $K$.
  \end{enumerate}
\end{proof}

\begin{example}
  \label{ex:m23}
  The Mathieu group $M_{23}$ is the only sporadic simple group not known to be a Galois group over $\Q$.
  In \autocite[Exemple 3.12]{Cau2}, a component defined over $\Q$ of connected $M_{23}$-covers with $15$ branch points is constructed.
  \Cref{thm:prod-conj-defK} improves upon this result.
  The group $M_{23}$ is generated by two conjugate elements $a, a^{\gamma}$ of order $3$.
  Using GAP:
  \begin{lstlisting}[language=GAP]
    a := (1, 22, 14) (2, 13, 9) (3, 8, 6) (7, 16, 21) (10, 18, 19) (11, 23, 12);
    b := (2, 4, 16) (3, 5, 7) (6, 11, 12) (8, 9, 14) (10, 21, 20) (15, 18, 17);
    StructureDescription(Group(a, b)); # Output: "M23"
    IsConjugate(Group(a, b), a, b); # Output: true
  \end{lstlisting}

  By the conclusions of \Cref{ex:bcl-abelian}, the component $x = (a,a^{-1})$ and its conjugate $x^{\gamma}$ are defined over $\Q$.
  By \Cref{thm:prod-conj-defK}, there are elements $\gamma_1, \gamma_2 \in M_{23}$ such that $x^{\gamma_1}x^{\gamma_2 \gamma}$ is a component defined over $\Q$ of connected $M_{23}$-covers with four branch points.
  The same is true of the component $x x^{\tilde\gamma}$ where $\tilde \gamma = \gamma_1^{-1} \gamma_2 \gamma$.
  However, we know little about $\tilde \gamma \in M_{23}$.
  There are many pairs of generators of $M_{23}$ with orders in $\{2,3,4,6\}$, and consequently many other examples with four branch points.
\end{example}

\begin{example}
  \label{ex:17T7}
  Similarly, the group $G = \mathrm{PSL}_2(16) \rtimes \Z/2\Z$ (labeled ``17T7'' on the Klüners-Malle database and on LMFDB), which is the transitive group of least degree not known to be a Galois group over $\Q$, is generated by two conjugate elements $a,b$ of order $6$:
  \begin{lstlisting}[language=GAP]
    a := (1, 11, 5, 13, 14, 17) (3, 15, 7, 12, 8, 6) (9, 10, 16);
    b := (1, 2, 15, 12, 8, 5) (3, 14, 11,4, 9, 6) (7, 10, 17);
    StructureDescription(Group(a, b)); # Output: "PSL(2,16) : C2"
  \end{lstlisting}
  Like in \Cref{ex:m23}, we conclude by \Cref{thm:prod-conj-defK} that there is a component defined over $\Q$ of connected $G$-covers with $4$ branch points.

  The same holds for any group generated by two elements with orders in $\{2,3,4,6\}$.
  More generally, if one has a finite generating set of $G$ and $m(i)$ is the number of generators of order $i$, then there is a component defined over $\Q$ of connected $G$-covers whose number of branch points is $2m(2) + \sum_{i \geq 3} \varphi(i)m(i)$.
\end{example}

\printbibliography
\end{document}